\newtheorem{corollary}{Corollary}
\newtheorem{lemma}{Lemma}
\newtheorem{proposition}{Proposition}
\newtheorem{theorem}{Theorem}
\theoremstyle{definition}
\newtheorem{definition}{Definition}
\newtheorem{notation}{Notation}
\newtheorem{remark}{Remark}
\newcommand{\N}{\mathbb{N}}
\newcommand{\R}{\mathcal{R}}
\newcommand{\T}{\mathcal{T}}
\newcommand{\Sym}{\mathcal{S}}
\newcommand{\IS}{\mathcal{IS}}
\newcommand{\CC}{\mathfrak{C}}
\newcommand{\PP}{\mathcal{PP}}
\newcommand{\I}{\mathcal{I}}
\newcommand{\J}{\mathcal{J}}
\newcommand{\PR}{\mathcal{PR}}
\newcommand{\tTL}{\mathrm{tTL}}
\renewcommand{\P}{\mathcal{P}}
\newcommand{\e}{\epsilon}
\newcommand{\vcdraw}[1]{\vcenter{\hbox{#1}}}
\begin{document}

\title{Ramified inverse and planar monoids}

\author[F. Aicardi]{Francesca Aicardi}
\address{Sistiana Mare 56, 34011 Trieste, Italy.}
\email{francescaicardi22@gmail.com}

\author[D. Arcis]{Diego Arcis}
\address{Departamento de Matem\'aticas, Universidad de La Serena, Cisternas 1200, 1700000 La Serena, Chile.}
\email{diego.arcis@userena.cl}

\author[J. Juyumaya]{Jes\'us Juyumaya}
\address{Instituto de Matem\'aticas, Universidad de Valpara\'{\i}so\\Gran Breta\~na 1111, 2340000 Valpara\'{\i}so, Chile.}
\email{juyumaya@gmail.com}

\date{}
\keywords{}
\subjclass{20F05,20M18,20M20,05A18,20F36,57M27,20C08}
\thanks{}

\maketitle

\begin{abstract}
Ramified monoids are a class of monoids introduced by the authors. The main motivation for considering these monoids comes from knot theory, see \cite{AiJuJKTR2016,AiJuMathZ2018,AiJu2021}. Thus, in \cite{AiArJu2023} we have studied the ramified monoids of the symmeytric group and of the Brauer monoid, among others. This paper study the ramified of the inverse symmetric monoid, which plays a notable role in knot theory as well, see \cite{BiStRaReJKTR2012}. Here is also introduced the notion of planar ramified monoid. In particular, we give presentations for some planar ramified monoids arising from noncrossing set partitions.
\end{abstract}

\maketitle

\begin{center}
\begin{minipage}{10cm}
\tableofcontents
\end{minipage}
\end{center}

\section*{Introduction}

In \cite[Definition 10]{AiArJu2023}, it  was defined the ramified monoid  attached to every  submonoid  of the partition monoid \cite{Jo1993, HaRa2005}. The initial motivation to define ramified monoids arises from the tied symmetric monoid  which is the Coxeter version of the tied braid monoid, see \cite{AiJuJKTR2016,AiArJu2023}. In the context of knot theory, these tied monoids  are to the so-called bt-algebra \cite{AiJuJKTR2016, MaIMRN2018, PoWaProc2018, RyHaJAC2011}, as the symmetric group  and braid group are to the Iwahori--Hecke algebra. It should be noted that these  monoids are constructed as semidirect products. This construction was carried out for other Coxeter or Artin-type monoids, thus obtaining several families of tied monoids, see \cite{ArJu2021} for details. However, for other monoids of interest in knot theory, such as the Brauer, Jones and inverse  symmetric monoids, among others, it is not possible to attach a tied monoid by applying the techniques used in \cite{ArJu2021}. The ramified monoid concept comes to solve this problem, that is, the ramification yields  a tied version of the Brauer monoid, which in turn yields the tBMW algebra \cite{AiJuMathZ2018} and also recover the tied symmetric monoid.

This article concerns with the construction of presentations of the ramified (or tied) inverse symmetric monoid and some   ramified of planar related monoids. Inverse monoids were introduced, independently, by  V.V. Wagner \cite{Wa1952} and G. B. Preston \cite{PrJLMS1954}. The inverse symmetric monoid can be considered as a generalization of the symmetric group and plays an important role in the inverse monoid theory. For instance, the Wagner--Preston theorem says: every inverse monoid embeds in an appropriate symmetric inverse monoid. This theorem is the classical Cayley's theorem for monoids. See \cite{LiMSM-AMS1996} for a survey on inverse symmetric monoids.

On the other hand, famous deformations of inverse symmetric monoids are the Rook algebras introduced by L. Solomon \cite{SoGD1990}. In \cite{BiStRaReJKTR2012}, the Jones and Alexander polynomial were obtained from representations of the Rook algebra. To be more precise, these representations are obtained from the planar Rook algebra, see  \cite{BiStRaReJKTR2012} for details, cf. \cite{HaRa2004}. This relation between Rook algebra and knot theory is another of the reasons that motivated this work, since it is plausible to think of the existence of a ramified Rook algebra built from the presentation of the ramified inverse symmetric monoids constructed here.

The main objectives of the article are to build presentations of the ramified monoid $\R(IS_n)$, of the inverse symmetric monoid on $n$ points $IS_n$, and of some monoids related to it. Observe that the results in this article should be obtained for the inverse braid monoid, see \cite{EaLaAim2004,EastAiM2007}.

The article is organized as follows. In Section \ref{Preliminaries} we recall some facts about the monoid of set partition $P_n$ and the partition monoid $\CC_n$. In particular, we discuss the diagrammatic realization of these monoids as well as the relation between them. In Section \ref{SnISn}, we recall some details of the inverse symmetric monoid $IS_n$. In particular, we take a close look at the realization of it as the submonoid $\IS_n$ of $\CC_n$, that is, as the one formed by the set partitions of $[2n]$ that have only lines or points as blocks. By using this diagrammatic realization a normal form is determined for the elements of $IS_n$ (Remark \ref{normalFormISn}), which will be used later. In Section \ref{RamTiedMon}, we recall what a ramified partition is and the ramified monoid of a submonoid of $\CC_n$. Also, we include a couple of general properties of ramified monoids, see Proposition \ref{facRam} and its corollary; this section conclude by recalling that the tied symmetric monoid (see \cite[Subsection 5.1.1]{AiArJu2023}) is the ramified monoid of the symmetric group $S_n$. Section \ref{RamSymInvMon} begins by calculating the cardinality of the ramified monoid $\R(IS_n)$ of $IS_n$ which is obtained directly by using Proposition \ref{facRam}. We continue showing the initial motivation of this article, which is to find a presentation of the ramified monoid $\R(IS_n)$, see Theorem \ref{PresRISn}. To prove this theorem we provide a normal form for the elements of $\R(\IS_n)$, see Remark \ref{lemNormRISn} for details. In Section \ref{PlanMons}, we introduce the concept of planar ramified monoid $\PR(M)$, for every planar submonoid $M$ of $\CC_n$. Note that this extends the definition of planar monoid given by Jones in \cite{Jo1983}. In this section we give also presentations for the monoids $\R(\I_n)$ (Theorem \ref{presRIn}), $\PR(\I_n)$ (Theorem \ref{presPRIn}) and $\PR(\J_n)$ (Theorem \ref{PRJnisomTTL}). Observe that the monoid $\I_n$ is one of the factors that appears in the decomposition $IS_n=\I_nS_n$, see Eq. (\ref{InPres}). Also note that thanks to Proposition \ref{facRam} we have $\R(ISn)=\R(\I_n)S_n$.

{\it Notation:} in this manuscript, for integers $a,b$, we will denote by $[a,b]$ the interval of integers $k$ satisfying $a\leq k\leq b$. If $a=1$, this interval will be denoted simply by $[b]$ instead. Further, for a set $X$, we will denote by $X^*$ the free monoid generated by it.

\section{Preliminaries}\label{Preliminaries}

This section recalls two classical monoids used throughout the article, that is, the monoid of set partitions and the partition monoid. We discuss the diagrammatic realization of these monoids as well as the relation between them.

\subsection{Set partitions}

A {\it set partition} of a set $A$ is a finite collection $I$ of nonempty sets $I_1,\ldots,I_k$, called {\it blocks}, such that $I_1\cup\cdots\cup I_k=A$ and $I_i\cap I_j=\emptyset$ for all $i,j\in[k]$ with $i\neq j$. We denote $I$ by $(I_1,\ldots,I_k)$ if $A$ is linearly ordered and $\min(I_1)<\cdots<\min(I_k)$. The collection of set partitions of $A$ is denoted by $P(A)$, and for a positive integer $n$ we shall write $P_n$ instead of $P([n])$. The number of set partitions of $[n]$ is the $n$th {\it Bell number} $b_n:=|P_n|$, see \cite[A000110]{OEIS}.

Diagrammatically, set partitions are usually represented by {\it arc diagrams}, however, here we represent a partition of  $[n]$ by a {\it diagram of ties}, i.e., by $n$ parallel lines, playing the role of the elements of $[n]$, which may be connected by some red springs \cite{AiJuJKTR2016}, called {\it ties}, if they belong to the same block. Note that, due to transitivity, not every pair of lines belonging the same block need to be connected by a tie. See Figure \ref{figureone}.
\begin{figure}[H]
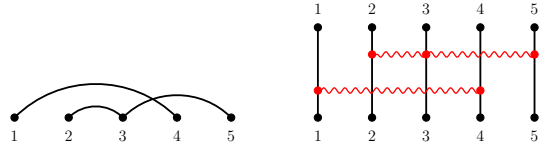

\figureone
\caption{Diagrams of the set partition $(\{1,4\},\{2,3,5\})$ of $[5]$.}
\label{figureone}
\end{figure}

For $B\subseteq A$ and $I\in P(A)$, we denote by $I\cap B$ the set partition of $B$ obtained by removing the elements of $A\setminus B$ from the blocks of $I$, i.e. $I\cap B=\{K\cap B\mid K\in A\}\setminus\emptyset$.

\subsection{Monoids of set partitions}\label{SetPartSub}
There is a partial order $\preceq$ given by refinement, which gives to $P(A)$ a structure of poset, i.e. $I\preceq J$ if each block of $J$ is a union of blocks of $I$. The collection $P(A)$ together with the product $IJ=\sup(I,J)$ is an idempotent commutative monoid with identity $1_A=\{\{a\}\mid a\in A\}$. Denote $1_{[n]}$ by $1_n$, or simply $1$. 

\begin{remark}\label{PA=Pn}
We have $P(A)\simeq P_n$ for all set $A$ with $|A|=n$. The monoid $P_n$ is called the {\it monoid of set partitions}.
\end{remark}

For every nonempty subset $B$ of $A$, we denote by $e_B$ the set partition of $A$ in which $B$ is its unique nontrivial block, that is, $e_B=\{B\}\cup\{\{a\}\mid a\in A\setminus B\}$. We set $e_{i,j}=e_{\{i,j\}}$. These set partitions are the generators of $P_n$. See Figure \ref{figuretwo}.
\begin{figure}[H]
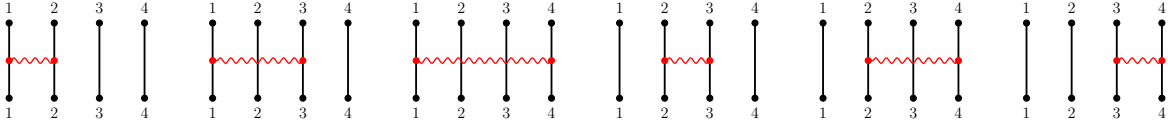

\figuretwo
\caption{Generators of $P_4$.}
\label{figuretwo}
\end{figure}

\begin{theorem}[FitzGerald {\cite[Theorem 2]{Fi03}}]
The monoid $P_n$ can be presented by generators $e_{i,j}$, with $i,j\in[n]$ and $i<j$, subject to the following relations:
\begin{align}
e_{i,j}^2&=e_{i,j}\quad\text{for all }i<j,\label{ee1}\\
e_{i,j}e_{r,s}&=e_{r,s}e_{i,j}\quad\text{for all }i<j\text{ and }r<s,\label{ee2}\\
e_{i,j}e_{i,k}&=e_{i,j}e_{j,k}=e_{i,k}e_{j,k}\quad\text{for all }i<j<k.\label{ee3}
\end{align}
\end{theorem}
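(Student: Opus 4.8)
The plan is to realize the abstract monoid $F$ defined by the presentation as a monoid of graphs on the vertex set $[n]$ and to match it, via a normal form, with the set of set partitions. Let $F$ denote the monoid generated by symbols $e_{i,j}$ ($i<j$) subject to (\ref{ee1}), (\ref{ee2}), (\ref{ee3}), and let $\phi\colon F\to P_n$ be the homomorphism sending each generator to the corresponding set partition $e_{\{i,j\}}$. First I would check that $\phi$ is surjective: given a set partition $I=(I_1,\ldots,I_k)$, writing each block as $I_\ell=\{a_1<\cdots<a_m\}$ and forming the ``star'' word $e_{a_1,a_2}e_{a_1,a_3}\cdots e_{a_1,a_m}$, the product over all blocks is a word whose image under $\phi$ is exactly $I$. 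So it remains to prove injectivity, for which it suffices to show $|F|\le|P_n|=b_n$.

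Next I would use the two easy families of relations to pass from words to graphs. By (\ref{ee1}) every generator is idempotent, and by (\ref{ee2}) the generators commute, so $F$ is a commutative idempotent monoid; hence every element of $F$ is represented by the \emph{set} of generators occurring in a word for it, i.e.\ by a simple graph $G$ on the vertex set $[n]$ whose edges are the pairs $\{i,j\}$ appearing. Under $\phi$, such a graph $G$ maps to the set partition of $[n]$ whose blocks are the connected components of $G$ (isolated vertices being singletons). Thus $\phi$ factors through the assignment $G\mapsto(\text{component partition of }G)$, and the whole problem becomes: show that two graphs represent the same element of $F$ \emph{iff} they have the same connected components.

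The core of the argument is the reduction of an arbitrary graph to a canonical representative using (\ref{ee3}). Relation (\ref{ee3}) is an \emph{edge-rotation} move: on any triple $\{i,j,k\}$ (with $i<j<k$) the three two-edge subgraphs $\{ij,ik\}$, $\{ij,jk\}$, $\{ik,jk\}$ all represent the same element, so one edge of a cherry may be pivoted about a shared vertex. Combining this with idempotence gives a \emph{triangle-collapse} move, namely $e_{i,j}e_{j,k}e_{i,k}=e_{i,j}(e_{i,k}e_{j,k})=e_{i,j}(e_{i,j}e_{j,k})=e_{i,j}e_{j,k}$, so any chord closing a triangle is redundant. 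Iterating triangle-collapse along cycles reduces any graph, within its component partition, to a spanning forest; then iterating edge-rotation transforms the spanning tree of each block into the canonical ``star'' rooted at the minimum of that block. Consequently every element of $F$ equals one of these canonical star-forest normal forms, of which there is exactly one per set partition of $[n]$, giving $|F|\le b_n$.

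Finally I would conclude. Since $\phi$ sends the canonical normal form attached to a partition $I$ back to $I$, distinct partitions have canonical forms with distinct $\phi$-images, so these $b_n$ normal forms are pairwise distinct in $F$; together with the reduction of the previous step this yields $|F|=b_n=|P_n|$. Hence the surjection $\phi$ is a bijection, i.e.\ an isomorphism, and the given presentation is correct. The step I expect to be the main obstacle is the reduction lemma of the third paragraph, that edge-rotation and triangle-collapse suffice to carry every graph to the canonical star forest of its component partition; this I would establish by induction on the number of edges (deleting cycle-closing chords) and then on the block sizes (pivoting a spanning tree to a star), keeping careful track that each move preserves the connected components and hence the $\phi$-image.
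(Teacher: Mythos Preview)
The paper does not supply its own proof of this theorem; it is quoted verbatim as FitzGerald's result \cite[Theorem 2]{Fi03} and used as a black box. So there is nothing in the paper to compare your argument against line by line.

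That said, your proposal is sound and is essentially the standard argument. A couple of small remarks. First, the sentence ``iterating triangle-collapse along cycles reduces any graph to a spanning forest'' is slightly glib: a cycle of length $\ge 4$ has no triangle, so you must first invoke edge-rotation to produce one (equivalently, use idempotence plus rotation to \emph{add} the chord across a length-$2$ path and only then collapse). You already have all the moves you need and you flag this step as the main obstacle, so this is a matter of phrasing rather than a gap. A clean alternative is: from any path $u\!-\!w\!-\!v$ deduce $e_{\{u,w\}}e_{\{w,v\}}=e_{\{u,w\}}e_{\{w,v\}}e_{\{u,v\}}$, hence in $F$ a connected graph on a block $B$ equals the complete graph on $B$, which then collapses by repeated triangle-collapse to the star at $\min(B)$.

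Second, the paper's own normal form (Proposition~\ref{nfPn}) uses the \emph{path} word $e_{i_1,i_2}e_{i_2,i_3}\cdots e_{i_{p-1},i_p}$ on each block rather than your star rooted at the minimum. Both are canonical spanning trees and either choice gives $|F|\le b_n$; your star form is arguably more convenient for the edge-rotation bookkeeping, while the path form is what the rest of the paper actually uses when it manipulates set partitions. If you want to align with the paper you could rotate your stars to paths at the end, but for the presentation theorem itself this is cosmetic.
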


\begin{proposition}[Normal form {\cite[Proposition 3.3]{ArJu2021}}]\label{nfPn}
Every set partition $I=(I_1,\ldots,I_k)$ of $[n]$ has a unique decomposition\[I=e_{I_1}\cdots e_{I_k}\quad\text{with}\quad e_{I_i}=e_{i_1,i_2}e_{i_2,i_3}\cdots e_{i_{p-1},i_p}\quad\text{where}\quad I_i=\{i_1<\cdots<i_p\}.\] 
\end{proposition}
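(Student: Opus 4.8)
The plan is to work entirely with the concrete description of the product in $P_n$ as the join in the refinement poset: $IJ=\sup(I,J)$ is the set partition in which two elements lie in a common block precisely when they are linked by a finite chain of elements whose consecutive terms share a block of $I$ or a block of $J$. Since $P_n$ is an idempotent commutative monoid, this product is associative and commutative, so the order in which the factors below are multiplied is immaterial. I would prove existence first and then observe that uniqueness is essentially forced by the combinatorics.

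For existence the key is a single-block computation, which I would carry out by induction on $p$: for a block $I_i=\{i_1<\cdots<i_p\}$ one has $e_{i_1,i_2}e_{i_2,i_3}\cdots e_{i_{p-1},i_p}=e_{I_i}$. The base case $p=2$ is just the definition $e_{i_1,i_2}=e_{\{i_1,i_2\}}$. For the inductive step, taking the join of $e_{\{i_1,\ldots,i_{p-1}\}}$ with $e_{i_{p-1},i_p}$ merges the block $\{i_1,\ldots,i_{p-1}\}$ with $\{i_{p-1},i_p\}$ through their common element $i_{p-1}$, leaving every other element a singleton; this is exactly $e_{\{i_1,\ldots,i_p\}}$. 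Thus each factor $e_{I_i}$ of the proposed normal form genuinely is the generator attached to the block $I_i$.

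It then remains to show $e_{I_1}\cdots e_{I_k}=I$. Because $I_1,\ldots,I_k$ are the blocks of $I$, they are pairwise disjoint with union $[n]$; hence in the join $e_{I_1}\vee\cdots\vee e_{I_k}$ no merging can occur across distinct $I_i$ (there is no shared element through which to build a linking chain), while each $e_{I_i}$ contributes precisely the single nontrivial block $I_i$. Consequently the product has block set exactly $\{I_1,\ldots,I_k\}$, i.e. equals $I$, which establishes existence.

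For uniqueness I would argue that the normal form is completely determined by $I$ as a combinatorial object: the blocks $I_1,\ldots,I_k$ of a set partition are uniquely determined, the convention $\min(I_1)<\cdots<\min(I_k)$ fixes the order of the block-factors, and the convention $i_1<\cdots<i_p$ inside each block fixes the consecutive-chain factorization of $e_{I_i}$. Hence there is exactly one word of the prescribed shape representing $I$. The only step demanding genuine care is the join computations in the existence argument—verifying that multiplying the generators $e_B$ corresponds to merging blocks into connected components—so I would state the chain characterization of $\sup$ precisely before invoking it; everything else reduces to bookkeeping with the ordering conventions.
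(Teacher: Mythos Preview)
Your argument is correct. The paper does not actually prove this proposition here: it is quoted verbatim from \cite[Proposition 3.3]{ArJu2021} and followed only by an illustrative figure, so there is no in-paper proof to compare against. Your approach---computing $e_{i_1,i_2}\cdots e_{i_{p-1},i_p}=e_{I_i}$ by induction via the join description of the product, then observing that joining the $e_{I_i}$ over pairwise disjoint blocks produces exactly $I$, and finally noting that the ordering conventions pin down the word uniquely---is the natural one and is essentially how the cited result is established.
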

See Figure \ref{figurethr} for an example.

\begin{figure}[H]
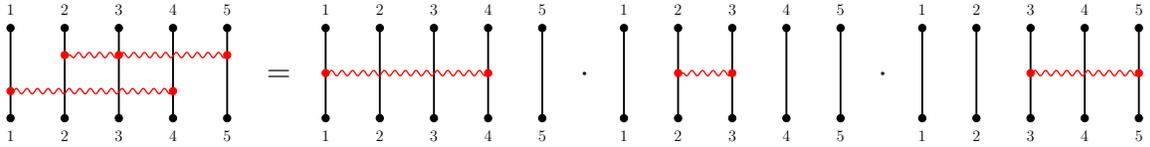

\figurethr
\caption{Normal decomposition of $(\{1,4\},\{2,3,5\})$.}
\label{figurethr}
\end{figure}

\begin{notation}\label{notInj}
By abuse of notation, for sets $A,B$ and $(I,J)\in P(A)\times P(B)$, we simply denote by $IJ$ the product $(I\cup1_{B\setminus A})(1_{A\setminus B}\cup J)$ in $P(A\cup B)$.
\end{notation}

\subsection{Partition monoid}\label{MonPartSub}
Here, every set partition of $[2n]$ will be represented by means of a {\it linear graph}, i.e., by $n$ aligned dots above, playing the role of the elements of $[n]$, and $n$ aligned dots below, inversely sorted, playing the role of the elements of $[n+1,2n]$, which may be connected by some lines when they belong to the same block. To our purpose it is convenient to relabel the dots below by replacing $n+k$ by $n+1-k$. See Figure \ref{figureonefiv}. As with diagram of ties, due to transitivity, not every pair of points in the same block need to be connected. In this context, the blocks containing only one element (trivial blocks) are called {\it points} and the blocks $\{i,n+j\}$ with $i,j\in[n]$ are called {\it lines}. The blocks that contain elements of $[n]$ and $[2n]\setminus[n]$ are called {\it generalized lines} and the blocks containing only two elements are usually called {\it arcs}; more specifically, arcs either contained in $[n]$ or disjoint with $[n]$ are called {\it brackets}, otherwise they are called {\it lines}.
\begin{figure}[H]
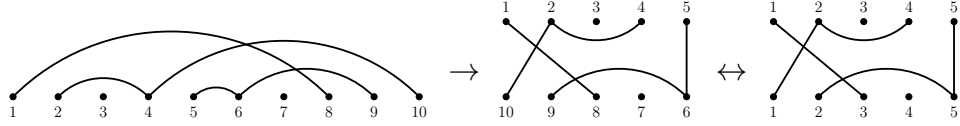

\figureonefiv
\caption{Linear graph of $(\{1,8\},\{2,4,10\},\{3\},\{5,6,9\},\{7\})$.}
\label{figureonefiv}
\end{figure}

Given $I,J\in P([2n])$, we use now the so--called {\it concatenation product} $I*J$ of $I$ with $J$, which is defined as $I_XJ^X$, where $I_X$ (resp. $J^X$) is obtained by replacing $n+k$ (resp. $k$) by $x_k$ from the blocks of $I$ (resp. $J$) for all $k\in[n]$, and $X$ is the auxiliary set $\{x_1,\ldots,x_n\}$ such that $X\cap[2n]$ is empty, see \cite[(17)]{AiArJu2023} and \cite{Xi1999} for details. Figure \ref{figurefou} shows the concatenation product in terms of diagrams. The set $P([2n])$ furnished with the {\it concatenation product $*$} is a noncommutative monoid with identity $1=(\{1,2n\},\ldots,\{n,n+1\})$, called the {\it partition monoid}, which is usually denoted by $\CC_n$. 
\begin{figure}[H]
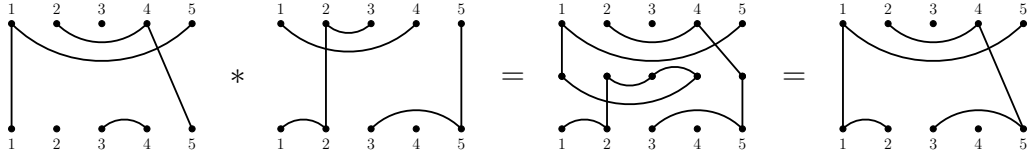

\figurefou
\caption{Concatenation product of $(\{1,5,10\},\{2,4,6\},\{3\},\{7,8\},\{9\})$ with $(\{1,4\},\{2,3,9,10\},\{5,6,8\},\{7\})$.}
\label{figurefou}
\end{figure}

\subsubsection{}\label{MonPartSubSub}
The collection of set partitions of $[2n-1]$ can be regarded as the subcollection of all set partitions of $[2n]$ that contain the singleton block $\{n+1\}$. This subcollection becomes a subsemigroup of $\CC_n$ and will be denoted by $\CC_n^{\bullet}$. In order to get a monoid structure on $\CC_n^{\bullet}$, we will represent this subcollection by identifying the $n$th dots above and below with a blue line as in the diagram given in Figure \ref{figuretwoeig}. Under this identification, $\CC_n^{\bullet}$ can be regarded as a submonoid of $\CC_n$. See Figure \ref{figuretwonin} for an example.

\begin{figure}[H]
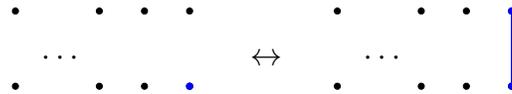

\figuretwoeig
\caption{Diagrammatic representation of $[2n-1]$.}
\label{figuretwoeig}
\end{figure}
\begin{figure}[H]
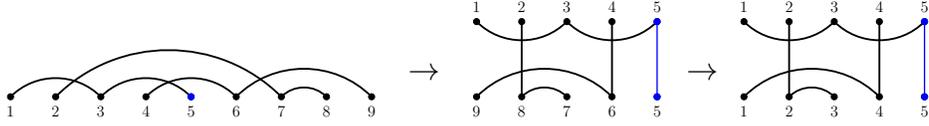

\figuretwonin
\caption{Set partition $(\{1,3,5\},\{2,7,8\},\{4,6,9\})$ of $[9]$ in $\CC_5^{\bullet}$.}
\label{figuretwonin}
\end{figure}

\begin{definition}\label{MBullet}
For a submonoid $M$ of $\CC_n$, we define $M^{\bullet}$ as the monoid $M\cap\CC_n^{\bullet}$.
\end{definition}

\section{The monoids $S_n$ and $IS_n$}\label{SnISn}

Here we recall the definition and the main properties of the inverse symmetric monoid $IS_n$. This  monoid is an extension of the symmetric group $S_n$, and in diagrammatic terms it contains elements of the partition monoid $\CC_n$ formed only by lines and points. Further, a normal form is shown for the elements of $IS_n$.

\subsection{Symmetric group}\label{SymGroupSub}
Recall that the symmetric group $S_n$ can be presented by generators $s_1,\ldots,s_{n-1}$ satisfying the following relations:
\begin{equation}\label{s}
s_i^2=1,\quad
s_is_j=s_js_i \text{ for }\vert i-j\vert>1,\quad
s_is_js_i=s_js_is_j\quad\text{for }\vert i-j\vert=1.
\end{equation}

The group $S_n$ can be realised as the submonoid $\Sym_n$ of $\CC_n$ formed by the partitions whose blocks are lines. Moreover, it coincides with the group of units of $\CC_n$, see \cite[Lemma 3.3]{KuMaCEJM2006}. Thus, for each $i\in[n-1]$, the generator $s_i$ can be realised as the set partition $(\{i,2n-i\},\{i+1,2n-i+1\}\}\cup\{\{k,2n-k+1\}\mid k\in[n]\setminus\{i,i+1\})$ represented in Figure \ref{figureonethr}. See Figure \ref{figurefiv} for an example.
\begin{figure}[H]
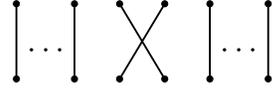

\figureonethr
\caption{Generator $s_i$.}
\label{figureonethr}
\end{figure}
\begin{figure}[H]
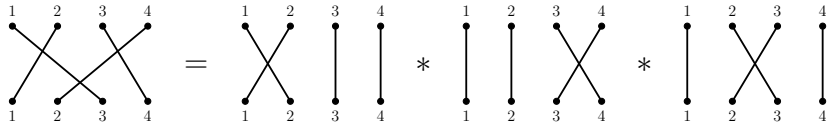

\figurefiv
\caption{Decomposition of $(\{1,6\},\{2,8\},\{3,5\},\{4,7\})$ in generators $s_i$'s.}
\label{figurefiv}
\end{figure}

For indexes $i,j$ with $i<j$, denote by $s_{i,j}\in\Sym_n$ the permutation exchanging $i$ with $j$, that is, $s_{i,j}=s_i\cdots s_{j-1}\cdots s_i$, which is represented as the set partition in Figure \ref{figuretwosix}. It is well known that $\Sym_n$ can be presented by generators $s_{i,j}$ with $i<j$, subject to the relations:\begin{equation}\label{genSnRels}s_{i,j}^2=s_{i,j},\quad s_{i,j}s_{j,k}=s_{j,k}s_{i,k}=s_{i,k}s_{i,j},\quad s_{i,j}s_{a,b}=s_{a,b}s_{i,j},\qquad i<j<k,\,\,\,a<b,\end{equation}where the continuous intervals $[i,j]$ and $[a,b]$ are either disjoint or nested.

\begin{figure}[H]
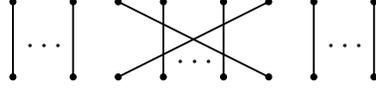

\figuretwosix
\caption{Generator $s_{i,j}$.}
\label{figuretwosix}
\end{figure}

\subsection{Symmetric inverse monoid}\label{SymInvMonSub}
A monoid $M$ is called {\it inverse} if its idempotents form a commutative submonoid and $M$ is {\it regular}, that is, for every $a\in M$ there is $b\in M$ such that $aba=a$ and $bab=b$. The prototype of inverse monoid is the so--called {\it symmetric inverse monoid}, which is a natural generalization of the symmetric group, formed by all partial transformations on a set of cardinality $n$ with multiplication given by the composition of functions. This monoid is denoted by $IS_n$ and was firstly studied in \cite{MuPCP1957}. Elements of $IS_n$ are also called {\it partial permutations}. It was shown in \cite[Remark 4.13]{KuMaCEJM2006} that $IS_n$ has a presentation with generators $s_1,\ldots,s_{n-1}$ satisfying (\ref{s}), and generators $r_1,\ldots,r_n$ subject to the following relations:
\begin{align}
r_i^2=r_i,&\quad
r_ir_j=r_jr_i\quad\text{for all }i,j,\label{r1}\\
s_ir_j&=r_{s_i(j)}s_i\quad\text{for all }i,j,\label{r2}\\
r_is_ir_i&=r_ir_{i+1}\quad\text{for }1\leq i\leq n-1.\label{r3}
\end{align}
Thus, $IS_n$ is the quotient of $\I_n\rtimes S_n$ by the congruence generated by relation (\ref{r3}), where $\I_n$ is the free idempotent commutative monoid of rank $n$, that is\begin{equation}\label{InPres}\I_n=\langle r_1,\ldots,r_n\mid r_i^2=r_i,\,r_ir_j=r_jr_i\rangle	\simeq I_1^{\times n}.\end{equation}

\begin{remark}
By using Tietze's transformations, we can show that $IS_n$ is presented by generators $s_{i,j}$ with $i<j$, and $r_1,\ldots,r_n$, subject to (\ref{genSnRels}), (\ref{r1}) and the following relations:\begin{equation}\label{gr23}s_{i,j}r_k=r_{s_{i,j}(k)}s_{i,j},\qquad r_is_{i,j}r_i=r_ir_j.\end{equation}
In \cite{PoUZL1961}, L. M. Popova shown that $IS_n$ can be presented with generators $s_1,\ldots,s_{n-1}$, $r$ subject to (\ref{s}) and the relations: 
\[r^2=r,\quad rs_{n-1}r=s_{n-1}rs_{n-1}r=rs_{n-1}rs_{n-1}\quad\text{and}\quad rs_i=s_ir\quad\text{if}\quad i<n-1.\]
On the other side, by defining $p_1=r_1$ and $p_i=p_{i-1}r_i$, we have that $IS_n$ is presented by generators $s_i$'s and $p_i$'s subject to (\ref{s}) and the relations: 
\[p_i^2=p_i\quad p_ip_j=p_jp_i,\quad p_is_ip_i=p_{i+1},\quad\text{and}\quad p_is_j=s_jp_i=p_i\quad\text{if}\quad j<i.\]
This one is the presentation of the monoid that generates, for $u=1$, the Rook algebra $\mathfrak{R}_n(u)$, see \cite{HaRa2004}.
\end{remark}

The symmetric inverse monoid can be realised as the submonoid $\IS_n$ of $\CC_n$ formed by the set partitions whose blocks are either lines or points, see \cite[(E2) p. 416]{KuMaCEJM2006}. Note that $S_n$ is also the group of units of $\IS_n$. In particular, for each $i\in[n]$, the generator $r_i$ can be realised as the set partition $\{\{i\},\{2n-i+1\}\}\cup\{\{k,2n-k+1\}\mid k\in[n]\setminus\{i\}\}$ in Figure \ref{riGenR}. See Figure \ref{figuresix} for an example.
\begin{figure}[H]
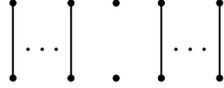

\figureonefou
\caption{Generator $r_i$.}
\label{riGenR}
\end{figure}

\begin{figure}[H]
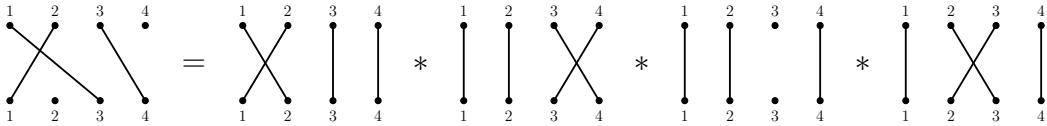

\figuresix
\caption{Decomposition in generators of $(\{1,6\},\{2,8\},\{3,5\},\{4\},\{7\})$.}
\label{figuresix}
\end{figure}

Note that $\I_n$ and $\I_{n+1}^{\bullet}$ are formed by the set partitions whose possible lines are all vertical, that is, lines $\{i,2n-i+1\}$ with $i\in[n]$. Since there are $\binom{n}{k}$ set partitions of $\I_n$ and of $\I_{n+1}^{\bullet}$ with $k$ lines,\[|\I_n|=|\I_{n+1}^{\bullet}|=\sum_{k=0}^n\binom{n}{k}=2^n.\]

\subsubsection{}\label{InvSymMonSubSub}
Elements of $\IS_n$ and $\IS_n^{\bullet}:=(\IS_n)^{\bullet}$ with $k$ lines are uniquely defined by the choice of the upper points and the lower points of the lines together with a permutation of $[k]$. Therefore, the cardinalities of $\IS_n$ \cite[A002720]{OEIS} is the following\[|\IS_n|=\sum_{k=0}^nk!\binom{n}{k}^2,\]while the cardinality of $\IS_n^{\bullet}$ \cite[A000262]{OEIS} is given by\[|\IS_n^{\bullet}|=\sum_{k=0}^{n-1}k!\binom{n}{k}\binom{n-1}{k}.\]See also \cite[Proposition 2.1]{{KuMaCEJM2006}}. Both cardinalities give \cite[A056953]{OEIS}.

\begin{remark}[Normal form]\label{normalFormISn}
For every $g\in\IS_n$ with exactly $m$ ($0\leq m\leq n$) lines, there are $(n-m)!$ permutations $s\in S_n$ containing the lines of $g$. Hence $g=r_{i_1}\cdots r_{i_k}s$, where $\{i_1\},\ldots,\{i_k\}$ with $i_1<\cdots<i_k$ are the points of $g$ contained in $[n]$. We denote by $g_p$ the unique permutation obtained by replacing the blocks $\{i_r\},\{j_r\}$ of $g$ with the line $\{i_r,j_r\}$, for all $r\in[k]$, where $\{j_1\},\ldots,\{j_k\}$ with $j_1<\cdots<j_k$ are the blocks of $g$ contained in $[2n]\setminus[n]$. Thus, the word $r_{i_1}\cdots r_{i_k}g_p$ in $(\{r_1,\ldots,r_n\}\cup\Sym_n)^*$, is a normal form of $g$. See Figure \ref{figuresev} for an example.
\end{remark}
\begin{figure}[H]
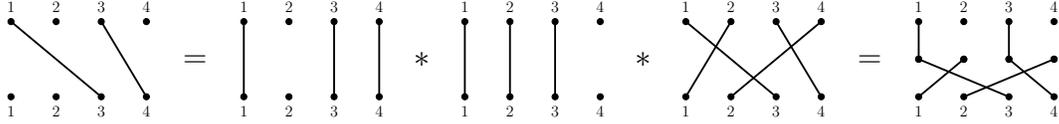

\figuresev
\caption{Normal decomposition of $(\{1,6\},\{2\},\{3,5\},\{4\},\{7\},\{8\})$.}
\label{figuresev}
\end{figure}

For every partial permutation $g\in\IS_n$, the permutation $g_p\in S_n$ defined in Remark \ref{normalFormISn} will be called the {\it completed permutation} of $g$.

\begin{remark}\label{minCrosNormISn}
Let $g$ be a partial permutation, and let $r_{i_1}\cdots r_{i_k}g_p$ be its normal form. By definition, $g_p$ is the permutation with minimal number of crossings which contains the same lines as $g$. Thus, by using (\ref{gr23}), we can write $g=rs$ for some $(r,s)\in\I_n\times S_n$, where $r=r_{i_1}\cdots r_{i_k}$ and $s=s'g_p$, with $s'$ is a permutation satisfying $rs'=r$.
\end{remark}

\section{Ramified and tied monoids}\label{RamTiedMon}

We first recall here the concept of ramified partition and show interpretations in terms of diagrams. Later, we recall the construction of the ramified monoid associated with any submonoid of the partition monoid and show some properties of this construction. The section ends showing that the tied monoid can be obtained as the ramified monoid of the symmetric group, see Remark \ref{tiedRamSn}.

\subsection{General background}

A {\it ramified partition} of a set $A$ is a pair $(I,J)$ of set partitions of $A$ such that $I\preceq J$, see \cite{MaEl2004} for more details. The collection of ramified partitions of $A$ is denoted by $RP(A)$, and for a positive integer $n$ we shall write $RP_n$ instead of $RP([n])$. The collection $RP(A)$ inherits the monoid structure of $P(A)\times P(A)$ because $I\preceq J$ and $H\preceq K$ implies that $IH\preceq JK$. Diagrammatically, we will represent ramified partitions of $[n]$ via {\it tied arc diagrams}, that is, by mixing arc diagrams and diagrams of ties, see \cite{Ai20} for details. Indeed, a ramified partition $(I,J)$ will be represented by connecting by ties the components of the linear graph of $I$ belonging to the same block of $J$. So, blocks of $I$ that are contained in the same block of $J$ must be transitively connected in the diagram of $(I,J)$. See Figure \ref{tiearcd1}
\begin{figure}[H]
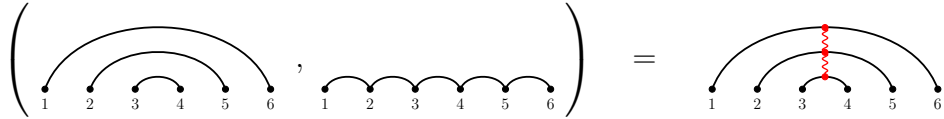

\figureforthr
\caption{Tied arc diagram of the ramified partition $(I,J)$ with $I\preceq J$, where $I=\{\{1,6\},\{2,5\},\{3,4\}\}$ and $J=\{\{1,2,3,4,5,6\}\}$}
\label{tiearcd1}
\end{figure}
Ramified partitions of $[2n]$ will be represented similarly by mixing linear graphs and diagrams of ties. See Figure \ref{figureonesix}. Cf. \cite[Figure 5]{AiArJu2023}.
\begin{figure}[H]
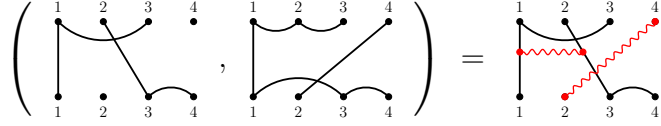

\figureonesix
\caption{Diagram of the ramified partition $(I,J)$ with $I\preceq J$, where $I=(\{1,3,8\},\{2,5,6\},\{4\},\{7\})$ and $J=(\{1,2,3,5,6,8\},\{4,7\})$.}
\label{figureonesix}
\end{figure}

The {\it ramified monoid} of a submonoid $M$ of $\CC_n$, denoted by $\R(M)$, is the monoid formed by the ramified partitions $(I,J)$ of $[2n]$ satisfying $I\in M$, see \cite[Definition 10]{AiArJu2023} for details. As mentioned in \cite{AiArJu2023}, every submonoid of $\CC_n$ embeds into its ramified monoid via the map $I\mapsto(I,I)$. Notice that $\R(\CC_n)$ is formed by all the ramified partitions of $[2n]$.
\begin{proposition}\label{ramId}
We have $\R(\{1\})\simeq P_n$.
\end{proposition}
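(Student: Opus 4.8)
The plan is to exhibit an explicit isomorphism by first pinning down the elements of $\R(\{1\})$ and then verifying that the monoid operation matches the one on $P_n$. Since the submonoid $\{1\}$ of $\CC_n$ contains only the identity $1=(\{1,2n\},\ldots,\{n,n+1\})$, every element of $\R(\{1\})$ is a ramified partition $(1,J)$ with $1\preceq J$. The blocks of $1$ are the $n$ vertical lines $\{i,2n+1-i\}$, $i\in[n]$, and the condition $1\preceq J$ says precisely that each block of $J$ is a union of such lines. Hence $J$ is completely determined by how the $n$ vertical lines are grouped, i.e.\ by a set partition $\bar J$ of $[n]$, where $i$ and $j$ lie in the same block of $\bar J$ exactly when the lines $\{i,2n+1-i\}$ and $\{j,2n+1-j\}$ lie in the same block of $J$. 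This assignment $J\mapsto\bar J$ is a bijection between $\{J\in P([2n]):1\preceq J\}$ and $P_n$.

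Next I would define $\phi\colon\R(\{1\})\to P_n$ by $\phi(1,J)=\bar J$ and check that it is a monoid homomorphism. The product in $\R(\{1\})$ is the componentwise concatenation product; since the first coordinate is always the identity $1$ and $1*1=1$, we get $(1,J)*(1,K)=(1,J*K)$. The identity element $(1,1)$ is sent to $1_n=\bar 1$, the finest partition of $[n]$, so the crux reduces to the single identity $\overline{J*K}=\sup(\bar J,\bar K)$, which is exactly the product of $\bar J$ and $\bar K$ in $P_n$.

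The main obstacle, and the only real content, is this last identity, which I would settle by a direct connected-components argument on the stacked linear graphs. Label the top row $t_1,\ldots,t_n$, the identified middle row $m_1,\ldots,m_n$, and the bottom row $b_1,\ldots,b_n$. Because $J$ is coarser than $1$, its diagram always joins $t_i$ to $m_i$, and joins $t_i,m_i$ to $t_j,m_j$ whenever $i\sim_{\bar J}j$; likewise $K$ always joins $m_i$ to $b_i$ and joins indices according to $\bar K$. Taking the transitive closure through the shared middle row, one sees that $m_i\sim m_j$ in the concatenated diagram exactly when $i,j$ lie in a common block of $\sup(\bar J,\bar K)$, and the relations $t_i\sim m_i\sim b_i$ then propagate the same grouping to the $t$'s and $b$'s. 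Thus $J*K$ is again coarser than $1$, with associated $[n]$-partition $\sup(\bar J,\bar K)$, proving $\phi$ is a homomorphism. Since $\phi$ is bijective (because $J\mapsto\bar J$ is) and sends the identity to the identity, it is the desired isomorphism $\R(\{1\})\simeq P_n$.
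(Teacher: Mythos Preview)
Your proof is correct and follows the same underlying idea as the paper: identify $(1,J)$ with the partition of $[n]$ obtained by grouping the $n$ blocks of $1$ according to $J$, then invoke $P(A)\simeq P_n$ for $|A|=n$. The paper's one-line proof simply cites that remark and leaves implicit the check that concatenation on the second coordinate corresponds to the sup product on $P_n$; your connected-components argument through the stacked diagram makes that step explicit, which is a welcome elaboration rather than a different route.
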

\begin{proof}
The proof is a direct consequence of Remark \ref{PA=Pn}.
\end{proof}

In virtue of the proposition above, when there is no risk of confusion, we will write $e\in P_n$ instead of $(1,e)\in \R(\{1\})$.

\begin{proposition}\label{facRam}
Assume that $M=EG$ is a monoid with $G$ a group and $E$ the monoid of idempotents of $M$. Then $\R(M)=\R(E)G$.
\end{proposition}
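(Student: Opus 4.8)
The plan is to realise both $\R(E)$ and $G$ as submonoids of $\R(M)$ and then to establish the set equality $\R(M)=\R(E)G$ by a direct factorisation argument. Since $E\subseteq M$, every ramified partition $(I,J)$ with $I\in E$ automatically has $I\in M$, so $\R(E)\subseteq\R(M)$; and since $G\subseteq M\subseteq\CC_n$, the embedding $I\mapsto(I,I)$ restricts to an embedding $g\mapsto(g,g)$ of $G$ into $\R(M)$. With these two identifications in place, the inclusion $\R(E)G\subseteq\R(M)$ is immediate: $\R(M)$ is a monoid containing both subsets, hence is closed under their products.

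For the reverse inclusion I would argue as follows. Fix $(I,J)\in\R(M)$, so that $I\in M$ and $I\preceq J$, and use $M=EG$ to write $I=eg$ with $e\in E$ and $g\in G$. Here $G$ is the group of units of $M$, so $g$ is invertible in $\CC_n$ with inverse $g^{-1}$ and $g^{-1}g=gg^{-1}=1$. Set $J_0:=Jg^{-1}$ (concatenation product); I claim $(I,J)=(e,J_0)(g,g)$ with $(e,J_0)\in\R(E)$. The verification has two parts. First, the product of ramified partitions is componentwise concatenation (the structure $\R(M)$ inherits from $\CC_n\times\CC_n$), so $(e,J_0)(g,g)=(eg,\,J_0g)=(eg,\,Jg^{-1}g)=(I,J)$ by associativity and $g^{-1}g=1$. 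Second, $(e,J_0)$ is a genuine element of $\R(E)$: clearly $e\in E$, and the refinement $e\preceq J_0$ follows from $eg=I\preceq J$ by right--concatenating with $g^{-1}$, since $Ig^{-1}=e$. This yields $(I,J)\in\R(E)G$, hence $\R(M)\subseteq\R(E)G$, completing the equality.

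The only point requiring care — and the step I would isolate as a lemma — is the monotonicity of the concatenation product $*$ with respect to refinement, namely that $I\preceq J$ forces $Ig^{-1}\preceq Jg^{-1}$. This is exactly what makes $J_0=Jg^{-1}$ a valid second coordinate (and, dually, it is what guarantees that componentwise concatenation of two ramified partitions stays ramified, i.e. that $I_1\preceq J_1$ and $I_2\preceq J_2$ give $I_1I_2\preceq J_1J_2$, so that the componentwise description of the product is legitimate). I expect this to be routine from the diagrammatic definition of $*$: coarsening the blocks of a diagram before tracing connectivity through $g^{-1}$ can only coarsen the resulting partition. Everything else is bookkeeping with the factorisation $I=eg$ and the invertibility of $g$ in $\CC_n$.
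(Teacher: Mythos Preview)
Your argument is correct and follows exactly the paper's strategy: write $I=eg$ with $e\in E$, $g\in G$, and factor $(I,J)$ as an $\R(E)$--element times $(g,g)$. You go directly to the decomposition $(I,J)=(e,Jg^{-1})(g,g)$ and check $e\preceq Jg^{-1}$ via monotonicity of concatenation with respect to $\preceq$; isolating this monotonicity as the one nontrivial lemma is exactly right (and it is indeed what makes the componentwise product on $RP([2n])$ well defined).

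The only difference worth noting is in the second coordinate of the $\R(E)$--factor. The paper reaches its factorisation through the chain $(I,J)=(I,I)(1,J)=\cdots=(e,\,egJg^{-1})(g,g)$, whereas you write $(e,\,Jg^{-1})$. Your version is the one that cleanly recovers $J$ after multiplying by $(g,g)$: one has $Jg^{-1}*g=J$, while $egJg^{-1}*g=egJ=I*J$, and $I*J$ need not equal $J$ even when $I\preceq J$ (take $n=2$, $I=r_1$, $J$ the single--block partition). So your direct route is not just tidier but avoids a slip in the paper's intermediate equalities.
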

\begin{proof}
As $\R(E)\subseteq\R(M)$ and $G$ embeds into $\R(G)\subseteq\R(M)$ via $g\mapsto(g,g)$, then $\R(E)G\subseteq\R(M)$. Let $(I,J)\in\R(M)$, then $I\preceq J$ with $I=eg$ for some $(e,g)\in E\times G$. We have $(I,J)=(I,I)(1,J)=(eg,eg)(1,J)=(e,e)(g,g)(1,J)=(e,e)(1,gJg^{-1})(g,g)=(e,egJg^{-1})(g,g)$. Since $e\preceq egJg^{-1}$, then $(e,egJg^{-1})\in\R(E)$, hence $(I,J)\in\R(E)G$.
\end{proof}

\begin{corollary}\label{RamiInv}
If $M$ is an inverse monoid as in Proposition \ref{facRam}, then $\R(M)$ is also inverse.
\end{corollary}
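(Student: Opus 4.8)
The plan is to verify directly the two defining properties of an inverse monoid for $\R(M)$ — that its idempotents commute and that it is regular — using the factorization $\R(M)=\R(E)G$ of Proposition \ref{facRam} as the main structural input. Throughout I write the product of $\R(M)$ with its first coordinate given by the concatenation product $*$ of $\CC_n$, and regard $G$ as the subgroup $\{(g,g):g\in G\}$ of units of $\R(M)$.

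First I would locate the idempotents. If $(I,J)\in\R(M)$ satisfies $(I,J)^2=(I,J)$, then comparing first coordinates gives $I*I=I$, so $I$ is an idempotent of $M$, i.e. $I\in E$; since $(I,J)\in\R(M)$ forces $I\preceq J$, every idempotent of $\R(M)$ belongs to $\R(E)$. I would then show that $\R(E)$ is a commutative submonoid consisting of idempotents, so that it is exactly the idempotent set of $\R(M)$. The first coordinates pose no difficulty: they lie in $E$, which is a commutative band because $M$ is inverse. Everything therefore comes down to the second (tie) coordinate, where I would use that each $e\in E$ is represented by a diagram invariant under the top–bottom reflection (the idempotents of $M$ are ``partial identities''), so that the ties carried along a product of two such diagrams are transported symmetrically in the two factors; this is what would yield both idempotency and commutativity in the tie coordinate.

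For regularity I would use the reflection itself. Reflecting a diagram top-to-bottom is an involutive anti-automorphism $*$ of $\CC_n$ satisfying $d*d^{*}*d=d$; since $M$ is inverse it restricts to $M$ with $I^{*}=I^{-1}$, and it extends to ties, giving an involutive anti-automorphism $(I,J)\mapsto(I^{*},J^{*})$ of $\R(M)$ with $(I^{*},J^{*})\in\R(M)$ (reflection preserves $\preceq$). I would then check that $(I,J)^{*}$ is an inverse of $(I,J)$, namely $(I,J)(I,J)^{*}(I,J)=(I,J)$ together with its mirror image, using $I*I^{*}*I=I$ on first coordinates and the symmetric transport of ties on second coordinates. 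Equivalently, writing $I=eg$ with $e\in E$ and $g\in G$ as in Proposition \ref{facRam}, one has $(I,J)=(e,e')(g,g)$ with $(e,e')\in\R(E)$ idempotent, and then $(g^{-1},g^{-1})(e,e')$ is an inverse, since $(g,g)(g^{-1},g^{-1})=(1,1)$ and $(e,e')$ is idempotent.

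With $\R(M)$ shown to be regular and to have commuting idempotents, the definition of inverse monoid recalled in Subsection \ref{SymInvMonSub} gives that $\R(M)$ is inverse. The main obstacle is the tie coordinate in the second paragraph: commutativity of the idempotents of $\R(M)$ is not a formal consequence of $E$ being commutative, because the ramified product propagates ties through the concatenation of the underlying diagrams, and a priori this propagation can depend on the order of the factors. The heart of the argument is to show that for the reflection-symmetric diagrams arising from $E$ this propagation is order-independent; this is precisely where the inverse (rather than merely regular) structure of $M$ enters, and it is what distinguishes $\R(M)$ from the ramified monoid of a general submonoid of $\CC_n$.
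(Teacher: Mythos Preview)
Your overall strategy matches the paper's: verify regularity and commutativity of idempotents separately, using the factorization $\R(M)=\R(E)G$. Your regularity argument via the top--bottom reflection $(I,J)\mapsto(I^{*},J^{*})$ is correct and is essentially the paper's choice $b=g^{-1}e$ made explicit; in fact your version is cleaner, since it does not rely on the element of $\R(E)$ appearing in the factorization being idempotent (the paper's computation $aba=e^{3}g$ needs $e^{3}=e$, which is not automatic in $\R(E)$).

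The genuine gap is exactly where you locate it, but your proposed fix does not go through. Your heuristic --- that elements of $E$ have reflection-symmetric diagrams, so ties are ``transported symmetrically'' --- addresses the first coordinate only. It is the \emph{second} coordinate $J$ of an element of $\R(E)$ that governs tie propagation, and $J$ is an arbitrary element of $\CC_n$ coarsening some $e\in E$; it need not be reflection-symmetric, nor even idempotent. For instance, with $M=\IS_2$ one has $(r_1r_2,s_1)\in\R(\I_2)=\R(E)$, and this element is not idempotent, so $\R(E)$ is not a band. Worse, the idempotents of $\R(M)$ genuinely fail to commute: the generators $r_1=(r_1,r_1)$ and $e_1=(1,h_1)$ of $\R(\IS_2)$ are both idempotent, yet in the second coordinate
\[
h_1*r_1=\{\{1,2,3\},\{4\}\}\neq\{\{1\},\{2,3,4\}\}=r_1*h_1,
\]
so $e_1r_1\neq r_1e_1$. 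This is consistent with relation~(\ref{ris4}), which gives only $e_ir_ie_i=e_iq_i$ and never $e_ir_i=r_ie_i$.

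So the obstacle you identify is not merely technical: the corollary as stated is false for $M=\IS_n$. The paper's own proof has exactly the same gap --- it asserts without argument that ``the idempotents $\R(E)$ of $\R(M)$ is a commutative monoid'' --- so you have not missed an idea present in the paper. What your reflection argument \emph{does} establish is that $\R(M)$ is a regular $*$-monoid, which is the most one can say here.
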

\begin{proof}
The idempotents $\R(E)$ of $\R(M)$ is a commutative monoid, so the proof follows by showing that $\R(M)$ is regular. Now, given $a\in\R(M)$, it can be written as $a=eg$ with $e\in\R(E)$, $g\in G$. Define $b=g^{-1}e$, we have $ aba=a$ and $bab=b$, that is, $\R(M)$ is regular. 
\end{proof}

\subsection{Tied symmetric monoid}\label{TiedSymMonSub}
As usual, we denote by $TS_n$ the {\it tied symmetric monoid}, see \cite{AiArJu2023} for details. This monoid is presented by generators $s_1,\ldots,s_{n-1}$, $e_1,\ldots,e_{n-1}$ subject to relation (\ref{s}) together with the following relations:
\begin{align}
e_i^2 =e_i,&\quad e_ie_j=e_je_i\quad\text{for all }i\text{ and }j,\label{TSn1}\\
s_ie_j =e_js_i&\quad\text{if }|i-j|\neq 1,\quad e_is_js_i=s_js_ie_j\quad\text{if }|i-j|=1,\label{TSn2}\\
e_ie_js_i=e_js_ie_j&=s_ie_ie_j\quad\text{if }|i-j|=1.\label{TSn3}
\end{align}

\begin{remark}\label{tiedRamSn}
In \cite[Theorem 3]{AiJu2021} it is proved that $TS_n=P_n\rtimes S_n$, c.f. {\cite[Theorem 4.2]{Ba13}}, where the generators $e_i$'s are realised as the ramified partition represented in Figure \ref{figureonesev}, that is $(1,\{\{i,i+1,2n-i+1,2n-i\}\}\cup\{\{k,2n-k+1\}\mid k\in[n]\setminus\{i\}\})$. On the other hand,  in {\cite[Theorem 19]{AiArJu2023}}, it was  proved that $\R(S_n)=TS_n$. By combining these facts with \cite[Corollary 2]{La1998}, we get that $\R(S_n)$ can be presented with generators $s_1,\ldots,s_{n-1}$ satisfying (\ref{s}), and generators $e_{i,j}$ with $i<j$ satisfying (\ref{ee1})--(\ref{ee3}), subject to the relations $s_ke_{i,j}=e_{s_k(i),s_k(j)}s_k$, where $e_{i,j+1}=s_j\cdots s_{i+1}e_is_{i+1}\cdots s_j=s_i\cdots s_{j-1}e_js_{j-1}\cdots s_i$.
\end{remark}
\begin{figure}[H]
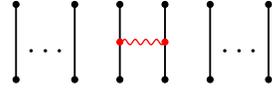

\figureonesev
\caption{Generator $e_i$.}
\label{figureonesev}
\end{figure}

Due to Proposition \ref{ramId} and Remark \ref{tiedRamSn}, we obtain $\R(S_n)=\R(\{1\})\rtimes S_n$. Furthermore, every element of $g\in\R(S_n)$ has a normal form $w=es$ in $(\{e_{i,j}\mid i<j\}\cup S_n)^*$, where $e$ is the normal normal form as in Proposition \ref{nfPn} of set partition in $P_n$, and $s\in S_n$ is the unique permutation defining $g$ obtained from the semi direct product above. See Figure \ref{figureoneeig} for an example.
\begin{figure}[H]
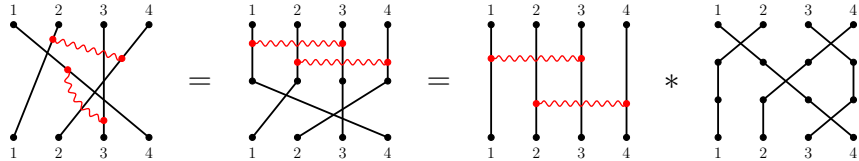

\figureoneeig
\caption{Decomposition of $e_{1,3}e_{2,4}s_1s_3s_2s_4$.}
\label{figureoneeig}
\end{figure}

\begin{remark}
Since $\R(S_n)=P_n\rtimes S_n$, {\cite[Corollary 2]{La1998}} implies that $\R(S_n)$ can also be presented by generators $s_{i,j}$ satisfying (\ref{genSnRels}), and generators $e_{i,j}$ satisfying (\ref{ee1})--(\ref{ee3}), both subject to the following relation given by the action of $S_n$ on set partitions of $[n]$:\begin{align}\label{genActSnPn}s_{i,j}e_{h,k}=e_{\{s_{i,j}(h),s_{i,j}(k)\}}s_{i,j}\quad\text{for all}\quad i<j\quad\text{and}\quad h<k.\end{align}
\end{remark}

\section{Ramified symmetric inverse monoid}\label{RamSymInvMon}

This section realizes one of the main objectives of the article, providing a presentation of the ramified monoid of the symmetric inverse monoid, see Theorem \ref{PresRISn}. The proof of this theorem uses a normal form of $\R(\IS_n)$ and diagrammatic arguments, see Corollary \ref{NFormISn} and Remark \ref{lemNormRISn}.

\subsection{}\label{sec4-1}

Due to Proposition \ref{facRam} and to the fact that $\IS_n=\I_nS_n$, we have $\R(\IS_n)=\R(\I_n)S_n$.

Since the number of blocks of a set partitions corresponding to an element with $k$ lines is $2n-k$, we have\[|\R(\IS_n)|=\sum_{k=0}^n k!\binom{n}{k}^2 b_{2n-k}.\]For $\IS_n^{\bullet}$, the number of lines is at most $n-1$, so, the number of blocks in the case of $k$ lines is evidently $2n-1-k$, hence\[|\R(\IS_n^{\bullet})|=\sum_{k=0}^{n-1}k!\binom{n}{k}\binom{n-1}{k}b_{2n-1-k}.\]

Now, in order to give a set of generators of $\R(\IS_n)$, we need to introduce the following notations. For $g\in\R(IS_n)$ with $g=(I,J)$ and $I\preceq J$, we will denote by $g^*$ the unique element in $IS_n$ in which $\{i,j\}$ is a line of $g^*$ if either it is a line of $I$, or, for some block $B$ of $J$ that contains no lines of $I$, $i=\min(B\cap[n])$ and $j=\max(B\cap[n+1,2n])$. Moreover, denote $g_p^*$ the completed permutation of the partial permutation $g^*$ determined by $g$. Finally, we denote by $q_i$ the element $(r_i,1)\in\R(\IS_n)$, which, according to our tied diagrammatic representation, is represented as in Figure \ref{figureonenin}.
\begin{figure}[H]
\figureonenin
\caption{Generator $q_i$ represented by a vertical tie.}
\label{figureonenin}
\end{figure}

\begin{proposition}\label{gensMonRISn}
$\R (\IS_n)$ is generated by $s_1,\ldots,s_{n-1}$, $r_1,\ldots,r_n$, $e_1,\ldots,e_{n-1}$, $q_1,\ldots,q_n$.
\end{proposition}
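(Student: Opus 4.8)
The plan is to exploit the factorisation $\R(\IS_n)=\R(\I_n)S_n$ supplied by Proposition \ref{facRam} (valid because $\IS_n=\I_nS_n$). Since $S_n=\langle s_1,\dots,s_{n-1}\rangle$, it suffices to show that the submonoid $\R(\I_n)$ lies in the subset generated by $r_1,\dots,r_n$, $e_1,\dots,e_{n-1}$, $q_1,\dots,q_n$ together with the $s_i$, which we are free to use since they belong to the full generating set. An element of $\R(\I_n)$ is a pair $(I,J)$ with $I\in\I_n$ and $I\preceq J$: here $I$ is a diagram whose blocks are vertical lines and points, determined by the set $P\subseteq[n]$ of columns carrying a point, while $J$ records a ramification (a ``tie partition'') of the blocks of $I$.

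First I would recover the underlying diagram $I$. By Eq.~(\ref{InPres}), $\I_n=\langle r_1,\dots,r_n\rangle$ is free idempotent commutative, so the embedding $I\mapsto(I,I)$ gives $(I,I)=\prod_{i\in P}r_i$. This reduces the problem to the following statement: for a fixed underlying diagram $r=\prod_{i\in P}r_i$, every ramification $(r,K)$ with $r\preceq K$ must be written as a product of the generators. Next I would manufacture \emph{elementary ties}, one joining each pair of blocks of $r$. A block is a vertical line or a point (top or bottom), and the three cases are: (i) two lines are joined by a conjugate $e_{i,j}$ of $e_i$, exactly as in the description of $\R(S_n)=TS_n$ recalled in Remark \ref{tiedRamSn}; (ii) a point and a line, or two points, are joined by products such as $e_{i,j}r_i$ and $e_{i,j}r_ir_j$ --- a direct diagram computation shows, for instance, that $e_{i,j}r_i=(r_i,K)$ where $K$ ties the top point at column $i$ to the line at column $j$, while $r_ie_{i,j}$ ties the bottom point; (iii) the two endpoints of a single broken line are joined by the vertical tie $q_i$ of Figure \ref{figureonenin}, and a top--bottom tie across distinct columns is obtained transitively by combining a horizontal tie from (i) or (ii) with the appropriate $q$'s. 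Throughout, conjugation by a permutation word in the $s_i$ transports a tie to any prescribed pair of columns.

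Finally I would assemble the general element: any $K\succeq r$ is the join of the two-block ramifications indexed by the edges of a spanning forest of each block of $K$, so $(r,K)$ is the corresponding product of the elementary ties just built. This is precisely the content of a diagrammatic normal form for $\R(\I_n)$. The step I expect to be the main obstacle is the bookkeeping in cases (ii)--(iii): one must check that multiplying two ramifications with the \emph{same} underlying diagram $r$ accumulates their ties to the join and creates no spurious connections through the middle layer of the concatenation product, and in particular that ties among top point-blocks and among bottom point-blocks can be produced independently of one another. This is where the explicit diagram surgery is unavoidable, and it is exactly the part that the normal form of Remark \ref{lemNormRISn} is designed to control.
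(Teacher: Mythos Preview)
Your overall strategy via $\R(\IS_n)=\R(\I_n)S_n$ is legitimate, but case (iii) of your elementary-tie construction is not correct as stated, and this is a genuine gap rather than mere bookkeeping. Consider $n=2$, $r=r_1r_2$, and $K=\{\{1,3\},\{2,4\}\}$, so that $(r,K)$ ties the top point at column $1$ to the bottom point at column $2$ and, separately, the top point at column $2$ to the bottom point at column $1$. Your proposal is to obtain the tie from top-$1$ to bottom-$2$ ``transitively'' via a horizontal tie and a $q$; but any such path forces the intermediate block into the same tie-class, so you end up with all four points in one block rather than two pairs. What is actually needed is a permutation in the middle: here $(r,K)=q_1q_2s_1$. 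Cross-ties between top and bottom in distinct columns cannot be produced from $e$'s, $r$'s and $q$'s alone, nor by $s$-conjugation transporting a tie; they require a genuine permutation factor.

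The paper avoids this difficulty by giving a direct one-shot factorisation that builds the permutation in from the start. For $g=(I,J)$ it defines an auxiliary partial permutation $g^*\in\IS_n$ by adding, for each block $B$ of $J$ containing no line of $I$, a line from $\min(B\cap[n])$ to $\max(B\cap[n+1,2n])$; then $g=e\,\hat r_{i_1}\cdots\hat r_{i_k}\,g^*_p\,e'$, where $g^*_p\in S_n$ is the completed permutation of $g^*$, $e=(1,J\cap[n])$ and $e'$ record the horizontal ties, and each $\hat r_{i_j}$ is $q_{i_j}$ or $r_{i_j}$ according to whether $i_j$ is the chosen top endpoint of a cross-tie. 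The permutation $g^*_p$ is used essentially, not just for conjugation, and a single diagram check shows the factors multiply back to $g$; the same factorisation immediately yields the normal form of Corollary \ref{NFormISn}. Finally, your appeal to Remark \ref{lemNormRISn} at the end is circular: that remark rests on Corollary \ref{NFormISn}, which is itself a consequence of the present proposition.
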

\begin{proof}
Let $g=(I,J)$ with $I\in\IS_n$ and $I\preceq J$. Then $g=e(I,g^*)e'$, where $e=(1,J\cap[n])$ and $e'$ is obtained from $(1,J\cap[n+1,2n])$ by removing the generators $e_{i,j}$ such that both $2n-i+1$ and $2n-j+1$ belong to lines of $I$. Consider $\{i_1\},\ldots,\{i_k\}$ the points of $I$ contained in $[n]$, and let $B_{i_j}$ be the block of $J$ containing $i_j$. Now, we define $\hat{r}_{i_j}=q_{i_j}$ if $i_j=\min(B_{i_j})$ and $B_{i_j}$ intersects $[n+1,2n]$ containing no lines of $I$. Otherwise, we set $\hat{r}_{i_j}$ as $(r_{i_j},r_{i_j})$. Thus, we have $g=e\hat{r}_{i_1}\cdots\hat{r}_{i_k}(g^*_p,g^*_p)e'$.
\end{proof}

The proposition above generalizes the normal form of $\IS_n$ described in Remark \ref{normalFormISn}. Thus, we obtain the following corollary.
\begin{corollary}[Normal form]\label{NFormISn}
The word $erg^*_pe'$ with $r=\hat{r}_{i_1}\cdots\hat{r}_{i_k}$ constructed during the proof of Proposition \ref{gensMonRISn} is a normal form in the free monoid $(\{r_i,q_i\mid i\in[n]\}\cup P_n\cup S_n)^*$.
\end{corollary}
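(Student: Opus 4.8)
The existence of a word $g=e\hat{r}_{i_1}\cdots\hat{r}_{i_k}g^*_pe'$ for each $g=(I,J)\in\R(\IS_n)$ is already furnished by Proposition \ref{gensMonRISn}, and every factor is canonically attached to $g$: the completed permutation $g^*_p$, the set partitions $e=(1,J\cap[n])$ and $e'$, and each $\hat{r}_{i_j}\in\{r_{i_j},q_{i_j}\}$ are all read off from $(I,J)$ by prescriptions that involve no choices. Hence the word is well defined, and to prove it is a normal form it remains only to show that the assignment $g\mapsto erg^*_pe'$ is injective. The plan is to exhibit an explicit reconstruction of the pair $(I,J)$ from the word and to check that it inverts the construction of Proposition \ref{gensMonRISn}.

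First I would recover the partial permutation $I$. The factor $r$ records the set $\{i_1<\cdots<i_k\}$ of top points of $I$, namely those indices $i$ for which a letter $r_i$ or $q_i$ occurs. For every top position $a\in[n]\setminus\{i_1,\ldots,i_k\}$ the value $g^*_p(a)$ equals the lower endpoint of the line of $I$ through $a$: indeed the only lines of $g^*$ that are not lines of $I$ are the extra ones emanating from line-free blocks of $J$, and each such extra line has upper endpoint $\min(B\cap[n])$, which is a point of $I$ and therefore lies in $\{i_1,\ldots,i_k\}$. Consequently the lines of $I$ are exactly $\{a,g^*_p(a)\}$ with $a\in[n]\setminus\{i_1,\ldots,i_k\}$, while $\{i_1\},\ldots,\{i_k\}$ together with the remaining lower singletons are its points. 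This is precisely the uniqueness of Remark \ref{normalFormISn}, now expressed through $r$ and $g^*_p$.

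Then I would recover $J$ as a transitive closure. The block structure of $J$ on the top row is read from $e=(1,J\cap[n])$ and on the bottom row from $e'$; each line $\{a,b\}$ of $I$ forces $a$ and $b$ into a common block because $I\preceq J$; and each letter $q_{i_j}$ signals a line-free block $B$ of $J$, whose associated extra line of $g^*_p$ joins $\min(B\cap[n])=i_j$ to $\max(B\cap[n+1,2n])$. I would then define $J$ to be the partition generated by all of these relations. By construction $I\preceq J$, so $(I,J)\in\R(\IS_n)$, and one checks that this $(I,J)$ is the original element, which yields injectivity and hence the normal-form property.

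The step I expect to be the main obstacle is verifying that the reduction defining $e'$ loses no information. Recall that $e'$ is obtained from $(1,J\cap[n+1,2n])$ by deleting the generators $e_{i,j}$ whose two lower endpoints both lie on lines of $I$. I would argue these deletions are harmless: if two bottom points $b,b'$ lie on lines $\{a,b\},\{a',b'\}$ of $I$ and are in a common block of $J$, then $a$ and $a'$ lie in that same block, hence in the same block of $J\cap[n]$ and therefore are already joined by $e$; the transitive closure above then reinstates the connection $\{b,b'\}$ automatically through $e$ and the two lines. This is exactly the point where the refinement $I\preceq J$ and the canonical endpoint choices in the definition of $g^*$ (the minimal top and maximal bottom element of each line-free block) are indispensable for making the reconstruction unambiguous; once this is established, $g\mapsto erg^*_pe'$ is a bijection onto its image and the word is a genuine normal form.
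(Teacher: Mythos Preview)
Your argument is correct, and in fact you have done more than is strictly needed. The paper offers no proof of this corollary at all: it is stated as an immediate consequence of the construction in Proposition~\ref{gensMonRISn}, the point being simply that every ingredient $e$, $r$, $g^*_p$, $e'$ is read off from the pair $(I,J)$ by a deterministic rule, so the assignment $g\mapsto erg^*_pe'$ is a well-defined function. Since Proposition~\ref{gensMonRISn} already shows that this word evaluates to $g$, injectivity is automatic: if $g_1\neq g_2$ had the same associated word $w$, evaluating $w$ would give both $g_1$ and $g_2$. Thus your first paragraph alone suffices.

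The reconstruction you carry out in the remaining paragraphs is therefore not required for the corollary, but it is correct and has independent value: it makes explicit why the reduction in the definition of $e'$ loses no information, and it foreshadows exactly the kind of analysis the paper later performs in Remark~\ref{lemNormRISn} when characterising which words of the shape $erse'$ are already in normal form. Your handling of the potentially delicate point --- that a deleted generator $e_{i,j}$ in $e'$, with both lower endpoints on lines of $I$, is recovered via $e$ and those lines through the refinement $I\preceq J$ --- is the right observation, and matches the spirit of the paper's later arguments.
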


For instance, consider $g=(I,J)$ with $I\preceq J$ defined as follows:\figureten Then, we obtain $g=erg^*_pe'$, where\figureele Therefore $g=e_{1,2}e_{4,5}\cdot r_2q_3r_5\cdot s_2s_3\cdot e_{1,5}e_{2,4}$ is represented as follows\figuretwe

\begin{remark}\label{remNFITS_n}
For every $g\in\R(\IS_n)$, the normal form $erg^*_pe'$, with $r=\hat{r}_{i_1}\cdots\hat{r}_{i_k}$, in Corollary \ref{NFormISn}, minimizes the number of vertical ties in $r$ by selecting at most one element $q_i$ for each block containing no lines of $I$, while it maximizes the number of horizontal lines on the ends $e$ and $e'$. As we mentioned above, this normal form generalizes the one for $\IS_n$ in Remark \ref{normalFormISn}, thus, as in Remark \ref{minCrosNormISn}, it minimizes the crossings in $g_p^*$. Note that it also generalizes the normal form of $\R(\Sym_n)$ given by the semi direct product shown in Remark \ref{tiedRamSn}.
\end{remark}

Let $\Omega_n$ be the monoid generated by $\sigma_1,\ldots,\sigma_{n-1}$, $\rho_1,\ldots,\rho_n$, $\e_1,\ldots,\e_{n-1}$, $\pi_1,\ldots,\pi_n$ satisfying the following relations:
\begin{align} 
\sigma_i^2=1,\quad
\sigma_i\sigma_j=\sigma_j\sigma_i&\text{ for }\vert i-j\vert>1,\quad
\sigma_i\sigma_j\sigma_i=\sigma_j\sigma_i\sigma_j\quad\text{for }\vert i-j\vert=1,\label{sgm}\\
\rho_i^2=\rho_i,&\quad
\rho_i\rho_j=\rho_j\rho_i\quad\text{for all }i,j,\label{rh1}\\
\sigma_i\rho_j&=\rho_{s_i(j)}\sigma_i\quad\text{for all }i,j,\label{rh2}\\
\rho_i\sigma_i\rho_i&=\rho_i\rho_{i+1}\quad\text{for }1\leq i\leq n-1,\label{rh3}\\
\e_i^2=\e_i,&\quad\e_i\e_j=\e_j\e_i\quad\text{for all }i\text{ and }j,\label{ep1}\\
\sigma_i\e_j=\e_j\sigma_i&\quad\text{if }|i-j|\neq1,\quad\e_i\sigma_j\sigma_i=\sigma_j\sigma_i\e_j\quad\text{if }|i-j|=1,\label{ep2}\\
\e_i\e_j\sigma_i=\e_j\sigma_i\e_j&=\sigma_i\e_i\e_j\quad\text{if }|i-j|=1,\label{ep3}\\
\pi_i^2&=\pi_i,\quad \pi_i\pi_j=\pi_j\pi_i,\label{qs1}\\
\pi_i\e_j&=\e_j\pi_i,\label{qs2}\\
\sigma_i\pi_j&=\pi_{s_i(j)}\sigma_i,\label{qs3}\\
\e_i\rho_j\e_i=\e_i\pi_j,&\quad\text{if }j=i,i+1,\quad\e_i\rho_j=\rho_j\e_i,\quad \text{if }j\neq i,i+1,\label{qis1}\\
\pi_i\rho_j&=\rho_j\pi_i,\quad\pi_i\rho_i=\rho_i,\label{qis2}\\
\rho_j\e_i\rho_j&=\rho_j,\quad j=i,i+1,\label{qis3}\\
\rho_i\e_i\rho_{i+1}&=\sigma_i\pi_i\rho_{i+1}.\label{qis4}
\end{align}

In what follows of this section, we denote by $\equiv$ the congruence generated by (\ref{sgm})--(\ref{qis4}).

\begin{lemma}\label{epiphi}
The mapping $\sigma_i\mapsto s_i$, $\rho_i\mapsto r_i$, $\e_i\mapsto e_i$, $\pi_i\mapsto q_i$ defines an epimorphism $\mu:\Omega_n\to\R(\IS_n)$.
\end{lemma}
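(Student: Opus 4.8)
The plan is to invoke the universal property of the presentation: since $\Omega_n$ is given by generators and relations, the assignment $\sigma_i\mapsto s_i$, $\rho_i\mapsto r_i$, $\e_i\mapsto e_i$, $\pi_i\mapsto q_i$ extends to a (necessarily unique) monoid homomorphism $\mu\colon\Omega_n\to\R(\IS_n)$ exactly when the proposed images satisfy every defining relation (\ref{sgm})--(\ref{qis4}) in $\R(\IS_n)$. Granting that, surjectivity is immediate from Proposition \ref{gensMonRISn}, which asserts that $s_1,\dots,s_{n-1}$, $r_1,\dots,r_n$, $e_1,\dots,e_{n-1}$, $q_1,\dots,q_n$ generate $\R(\IS_n)$; as all of these lie in the image of $\mu$, the map is an epimorphism. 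So the whole content is the relation-checking, which I would organize by the four families of generators and carry out using that the product in $\R(\IS_n)$ is the componentwise concatenation, $(I_1,J_1)(I_2,J_2)=(I_1*I_2,\,J_1*J_2)$, with $q_i=(r_i,1)$ where $1$ is the all-vertical identity of $\CC_n$.

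First I would dispose of the relations inherited from substructures already analyzed. Relations (\ref{sgm}) hold because $S_n$ embeds in $\R(\IS_n)$ via $g\mapsto(g,g)$ as a group whose generators $s_i=(s_i,s_i)$ satisfy (\ref{s}). Relations (\ref{rh1})--(\ref{rh3}) are precisely (\ref{r1})--(\ref{r3}) read through the embedding $\IS_n\hookrightarrow\R(\IS_n)$, $I\mapsto(I,I)$, under which $r_i=(r_i,r_i)$, so they hold because they hold in $IS_n$. Relations (\ref{ep1})--(\ref{ep3}) coincide with the tied-symmetric-monoid relations (\ref{TSn1})--(\ref{TSn3}) in the variables $s_i,e_i$; since $\R(S_n)=TS_n$ by Remark \ref{tiedRamSn} and $\R(S_n)\subseteq\R(\IS_n)$, these are satisfied.

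Next come the relations involving $q_i=(r_i,1)$. Writing $e_j=(1,E_j)$ and using that $1$ is a two-sided identity for $*$, the pure relations (\ref{qs1}) and the commutations (\ref{qs2}), (\ref{qs3}) reduce to idempotency and commutativity of the $r_i$ together with (\ref{r2}) on the first coordinate: for instance $q_iq_j=(r_ir_j,1)=(r_jr_i,1)=q_jq_i$; $q_ie_j=(r_i,E_j)=e_jq_i$; and $s_iq_j=(s_ir_j,s_i)=(r_{s_i(j)}s_i,s_i)=q_{s_i(j)}s_i$. Likewise (\ref{qis2}) is light: $q_ir_j=(r_ir_j,r_j)=r_jq_i$ and $q_ir_i=(r_i,r_i)=r_i$.

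The heart of the argument is the mixed family (\ref{qis1}), (\ref{qis3}), (\ref{qis4}), which I would verify diagrammatically by stacking linear graphs and tracking blocks in each of the two coordinates. For (\ref{qis1}) and (\ref{qis3}) one computes the $I$- and $J$-components of $e_ir_je_i$ and of $r_je_ir_j$ and checks that the transitivity of the tie $E_i$ across a broken strand yields $e_ir_je_i=(r_j,E_i)=e_iq_j$ for $j\in\{i,i+1\}$ and $r_je_ir_j=(r_j,r_j)=r_j$, while $e_ir_j=r_je_i$ for $j\notin\{i,i+1\}$ is clear since tie and point act on disjoint strands. The most delicate step, and the one I expect to require the most care, is (\ref{qis4}): $r_ie_ir_{i+1}=s_iq_ir_{i+1}$. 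On the first coordinate one must observe that $s_ir_ir_{i+1}=r_ir_{i+1}$ in $\IS_n$, since once both strands $i,i+1$ are broken into points the transposition $s_i$ acts trivially, so both sides have $I$-component $r_ir_{i+1}$; on the second coordinate one checks $r_i*E_i*r_{i+1}=s_i*r_{i+1}$ by a direct concatenation computation. This identity reconciles a diagram carrying a crossing on one side with a tie across a broken strand on the other, and is exactly where the definition of $q_i$ and relation (\ref{r2}) conspire.
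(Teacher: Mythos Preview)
Your proof is correct and follows exactly the approach the paper takes: verify that the defining relations of $\Omega_n$ hold under the assignment (so the map is a well-defined homomorphism), then invoke Proposition~\ref{gensMonRISn} for surjectivity. The paper's own proof is a single sentence to this effect; you have simply filled in the relation-checks explicitly and correctly, including the only nontrivial one, (\ref{qis4}).
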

\begin{proof}
The proof follows from the fact that the mapping respects the defining relations of $\Omega_n$ and Proposition \ref{gensMonRISn}.
\end{proof}

\begin{remark}\label{SubISnTSn}
Arguing as in \cite[Lemma 3.3]{KuMaCEJM2006}, we get that $IS_n$ and $TS_n$ are submonoids of $\Omega_n$. By Remark \ref{tiedRamSn}, the elements of $TS_n$ in $\Omega_n$ can be represented by $es$ and $se'$, where $s$ is a word in the letters $\sigma_i$'s, and $e,e'$ are words in the letters $\e_{i,j}:=\sigma_{j-1}\cdots\sigma_{i+1}\e_i\sigma_{i+1}\cdots\sigma_{j-1}$.
\end{remark}

\begin{remark}\label{lemNormRISn}
Let $g=(I,J)$ with $I\in\IS_n$ and $I\preceq J$, satisfying $g=erse'$ for some $e,e'\in P_n$, $r\in\langle r_1,\ldots,r_n,q_1,\ldots,q_n\rangle$ and $s\in\Sym_n$. Since $er$ and $e'$ have neither crossing lines nor crossing ties, $s$ must connect the same blocks of $J\cap[n]$ with the same blocks of $J\cap[n+1,2n]$. So, the unique possibilities that $erse'$ is not the normal form of $g$ are that the crossings of $s$ are not minimal or that, either the number of ties of $r$ is not minimal or the ties of $r$ are not located the most possible to the left. Thus, $erse'$ coincides with the normal form of $g$ if and only if none of the following properties hold:\begin{enumerate}
\item[(a)]$s=s_i\bar{s}$ such that $r$ involves $r_i,r_{i+1}$.
\item[(b)]$s=s_i\bar{s}$ such that $r$ involves $r_i,q_{i+1}$ and $e'$ involves $e_{\bar{s}(i),\bar{s}(i+1)}$.
\item[(c)]$s=s_i\bar{s}$ such that $r$ involves $q_i,r_{i+1}$ and $e'$ involves $e_{\bar{s}(i),\bar{s}(i+1)}$.
\item[(d)]$s=s_i\bar{s}$ such that $r$ involves $q_i,q_{i+1}$ and $e_i$ occurs in $e$ or  $e_{\bar{s}(i),\bar{s}(i+1)}$ occurs in $e'$.
\item[(e)]$r$ involves $q_i$ and $e$ involves $e_{\{i,j\}}$ such that $\{j,s(j)\}$ is a line of $I$.
\item[(f)]$r$ involves $q_i$ and $e'$ involves $e_{\{s(i),j\}}$ such that $\{j,s(j)\}$ is a line of $I$.
\item[(g)]$r$ involves $q_i,q_j$ such that $e_{i,j}$ occurs in $e$ or $e_{s(i),s(j)}$ occurs in $e'$.
\end{enumerate}See Figure \ref{figureeig} for examples of ramified partitions satisfying these properties.
\end{remark}
\begin{figure}[H]
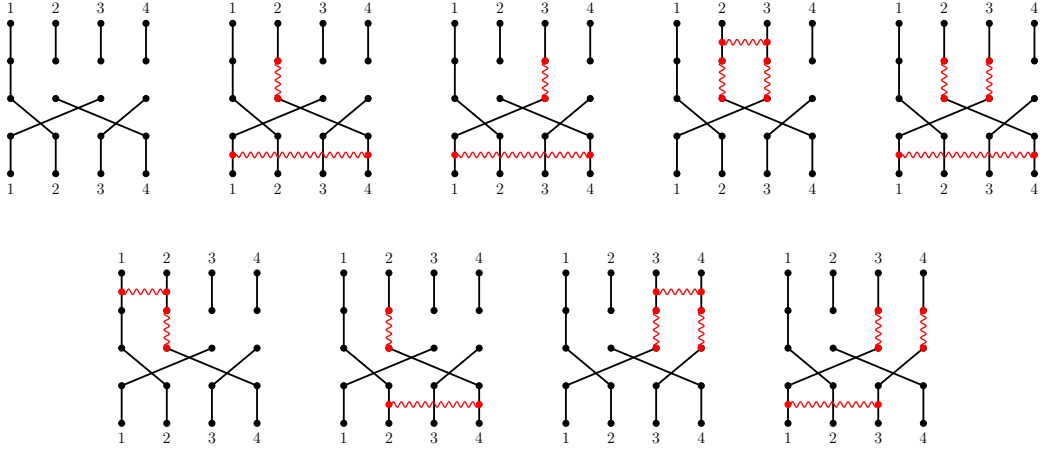

\figureeig
\caption{Ramified partitions satisfying conditions in Remark \ref{lemNormRISn}. The first three correspond to elements of types (a), (b) and (c) respectively, the fourth and fifth correspond to elements of type (d), the sixth and seventh correspond to elements of types (e) and (f) respectively, and the last two correspond to elements of type (g).}
\label{figureeig}
\end{figure}

\begin{lemma}\label{remNormExceptions}
The following relations hold in $\Omega_n$:
\begin{align}
\rho_i\rho_{i+1}\sigma_i&=\rho_i\rho_{i+1},\label{su1}\\
\pi_i\pi_{i+1}\sigma_i\e_i&=\pi_i\pi_{i+1}\e_i,\label{su2}\\
\rho_i\pi_{i+1}\sigma_i\e_i&=\rho_i\pi_{i+1}\e_i,\label{su3}\\
\pi_i\rho_{i+1}\sigma_i\e_i&=\pi_i\rho_{i+1}\e_i,\label{su4}\\
\e_i\pi_i\pi_{i+1}&=\e_i\pi_i\rho_{i+1}\e_i\label{su5}
\end{align}
Moreover, (\ref{su1})--(\ref{su4}) can be generalized for each pair $i,j\in[n]$ with $i<j$, by using $\sigma_{i,j}:=\sigma_i\cdots\sigma_{j-1}\cdots\sigma_i$ instead of $\sigma_i$.
\end{lemma}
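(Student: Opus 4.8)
The plan is to obtain each of (\ref{su1})--(\ref{su5}) as a consequence of the defining relations (\ref{sgm})--(\ref{qis4}) by an explicit chain of rewrites, processing them in an order that lets the later identities reuse the earlier ones. The tools I would use constantly are the \emph{movement} relations (\ref{rh2}), (\ref{qs3}) and (\ref{qs3}), which push $\sigma_i$ through a $\rho$ or a $\pi$ while swapping the index $i\leftrightarrow i+1$, together with the commutations $\sigma_i\e_i=\e_i\sigma_i$ (the case $|i-i|\neq1$ of (\ref{ep2})), $\pi_i\e_i=\e_i\pi_i$ of (\ref{qs2}) and $\pi_i\rho_{i+1}=\rho_{i+1}\pi_i$ of (\ref{qis2}). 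The key difficulty is that these relations are all index-symmetric and can never make a lone $\sigma_i$ disappear: commuting it around merely relabels subscripts and runs in a circle. A $\sigma_i$ can be removed only through a genuinely absorbing step, and there are exactly three available: $\rho_i\sigma_i\rho_i=\rho_i\rho_{i+1}$ from (\ref{rh3}), $\sigma_i\rho_i\sigma_i=\rho_{i+1}$ from (\ref{rh2}) together with $\sigma_i^2=1$, and the conversion $\sigma_i\pi_i\rho_{i+1}=\rho_i\e_i\rho_{i+1}$ of (\ref{qis4}). Engineering a word so that one of these collapses becomes applicable is the crux of each argument.

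I would first dispatch the two relations that require no bootstrapping. Relation (\ref{su1}) is immediate: by (\ref{rh2}) one has $\rho_{i+1}\sigma_i=\sigma_i\rho_i$, so
\[\rho_i\rho_{i+1}\sigma_i=\rho_i\sigma_i\rho_i=\rho_i\rho_{i+1}\]
by (\ref{rh3}). For (\ref{su5}) I would start from the right-hand side and commute: using $\pi_i\rho_{i+1}=\rho_{i+1}\pi_i$, then $\pi_i\e_i=\e_i\pi_i$, then $\e_i\rho_{i+1}\e_i=\e_i\pi_{i+1}$ from (\ref{qis1}), and finally $\pi_{i+1}\pi_i=\pi_i\pi_{i+1}$ from (\ref{qs1}),
\[\e_i\pi_i\rho_{i+1}\e_i=\e_i\rho_{i+1}\e_i\pi_i=\e_i\pi_{i+1}\pi_i=\e_i\pi_i\pi_{i+1}.\]

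Next come (\ref{su3}) and (\ref{su4}), both routed through (\ref{qis4}). Conjugating (\ref{qis4}) by the movement relations yields the two reformulations $\rho_i\pi_{i+1}\sigma_i=\rho_i\e_i\rho_{i+1}$ and $\pi_i\rho_{i+1}=\sigma_i\rho_i\e_i\rho_{i+1}$. For (\ref{su3}) the first reformulation already eliminates the $\sigma_i$, turning the left-hand side into $\rho_i\e_i\rho_{i+1}\e_i$, whereupon (\ref{qis1}) collapses $\e_i\rho_{i+1}\e_i=\e_i\pi_{i+1}$ and (\ref{qs2}) gives $\rho_i\pi_{i+1}\e_i$. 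For (\ref{su4}) I would substitute the second reformulation for the leading $\pi_i\rho_{i+1}$; this introduces a second $\sigma_i$, and after applying $\rho_{i+1}\sigma_i=\sigma_i\rho_i$ and $\e_i\sigma_i=\sigma_i\e_i$ the word contains the pattern $\sigma_i\rho_i\sigma_i=\rho_{i+1}$, killing both crossings, after which $\e_i\rho_i\e_i=\e_i\pi_i$ from (\ref{qis1}) and two commutations deliver $\pi_i\rho_{i+1}\e_i$. Finally (\ref{su2}) bootstraps: I would move the crossing to the front via $\pi_i\pi_{i+1}\sigma_i=\sigma_i\pi_i\pi_{i+1}$ and commute an $\e_i$ inward, then apply (\ref{su5}) to rewrite $\e_i\pi_i\pi_{i+1}$ as $\e_i\pi_i\rho_{i+1}\e_i$, apply (\ref{su4}) to annihilate the surviving $\sigma_i$, and run (\ref{su5}) backwards.

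For the ``moreover'' clause I would avoid redoing the five computations and instead lift the ingredients. Conjugating each relevant defining relation by the chain $\sigma_i\cdots\sigma_{j-1}$ (equivalently, arguing by induction on $j-i$) produces the index-$\{i,j\}$ analogues that the arguments above actually consume: the movement relations $\sigma_{i,j}\rho_k=\rho_{s_{i,j}(k)}\sigma_{i,j}$ and $\sigma_{i,j}\pi_k=\pi_{s_{i,j}(k)}\sigma_{i,j}$, the commutation $\sigma_{i,j}\e_{i,j}=\e_{i,j}\sigma_{i,j}$, the sandwich identities $\e_{i,j}\rho_i\e_{i,j}=\e_{i,j}\pi_i$ and $\e_{i,j}\rho_j\e_{i,j}=\e_{i,j}\pi_j$, and the two absorbing relations $\rho_i\sigma_{i,j}\rho_i=\rho_i\rho_j$ and $\rho_i\e_{i,j}\rho_j=\sigma_{i,j}\pi_i\rho_j$. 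With these in place, the derivations of (\ref{su1})--(\ref{su4}) transcribe verbatim after replacing $\sigma_i$ by $\sigma_{i,j}$, the subscript $i+1$ by $j$, and $\e_i$ by $\e_{i,j}$. I expect the main bookkeeping to be establishing the generalized (\ref{qis4}) and the commutation of $\sigma_{i,j}$ with $\e_{i,j}$ cleanly; here I would either push through the induction directly or invoke the embeddings of $IS_n$ and $TS_n$ into $\Omega_n$ from Remark \ref{SubISnTSn}, inside which these are standard identities.
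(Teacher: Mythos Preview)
Your argument is correct: each of (\ref{su1})--(\ref{su5}) follows from the defining relations of $\Omega_n$ along the chains you indicate, and the ``moreover'' clause is handled the same way the paper handles it, by conjugating the ingredients to indices $\{i,j\}$. There is one small slip in your sketch of (\ref{su2}): after moving $\sigma_i$ to the front and invoking (\ref{su5}) you arrive at $\e_i\sigma_i\pi_i\rho_{i+1}\e_i$; pushing $\sigma_i$ to the right turns $\sigma_i\pi_i\rho_{i+1}$ into $\rho_i\pi_{i+1}\sigma_i$, so the relation that fires is (\ref{su3}), not (\ref{su4}). This is a labeling issue only.

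The route, however, differs from the paper's. The paper proves (\ref{su2}) first and directly---using (\ref{qis4}) in the form $\sigma_i\pi_i\rho_{i+1}=\rho_i\e_i\rho_{i+1}$ inside the word $\e_i\sigma_i\pi_i\rho_{i+1}\e_i$---and then obtains (\ref{su3}) and (\ref{su4}) as immediate corollaries of (\ref{su2}) by inserting a redundant $\pi_i$ or $\pi_{i+1}$ via $\pi_k\rho_k=\rho_k$ from (\ref{qis2}). You reverse the dependency: you prove (\ref{su3}) and (\ref{su4}) first via reformulations of (\ref{qis4}), and then bootstrap (\ref{su2}) from (\ref{su5}) together with (\ref{su3}). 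Both orderings work, but there is one substantive consequence. Your derivation of (\ref{su4}) uses $\sigma_i^2=1$ twice (once to get $\pi_i\rho_{i+1}=\sigma_i\rho_i\e_i\rho_{i+1}$ from (\ref{qis4}), once for $\sigma_i\rho_i\sigma_i=\rho_{i+1}$). The paper's proof is entirely torsion-free, and this is not incidental: Remark~\ref{remAboutNFTorsion} records that Lemma~\ref{remNormExceptions} holds already in the torsion-free monoid $\Omega_n^+$, a fact used later in the proof of Proposition~\ref{normalUpsilon}. Your proof establishes the lemma as stated but does not support that stronger claim; if you want it, replace your (\ref{su4}) argument by the paper's reduction $\pi_i\rho_{i+1}\sigma_i\e_i=\pi_i\pi_{i+1}\rho_{i+1}\sigma_i\e_i$ and then invoke (\ref{su2}).
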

\begin{proof}
Due to (\ref{rh2}) and (\ref{rh3}), we have $\rho_i\rho_{i+1}\sigma_i=\rho_is_i\rho_i=\rho_i\rho_{i+1}$, which proves (\ref{su1}). To show (\ref{su2}) note that we can obtain an analogous of (\ref{qis1}) as follows\begin{equation}\label{qis11}\e_i\rho_{i+1}\e_i\stackrel{(\ref{qis2}),(\ref{qs3})}{=}\e_i\rho_{i+1}\sigma_i\pi_i\sigma_i\e_i\stackrel{(\ref{rh2}),(\ref{ep2}),(\ref{qis2})}{=}\sigma_i\e_i\rho_i\e_i\sigma_i\stackrel{(\ref{qis1})}{=}\sigma_i\pi_i\e_i\sigma_i\stackrel{(\ref{rh1}),(\ref{ep2}),(\ref{qs3})}{=}\pi_{i+1}\e_i.\end{equation}Now, by using this relation, we obtain (\ref{su2}), indeed\[\pi_i\pi_{i+1}\sigma_i\e_i\stackrel{(\ref{qs1}),(\ref{qs3})}{=}\pi_{i+1}\sigma_i\pi_{i+1}\e_i\stackrel{(\ref{qis11}),(\ref{qs3}),(\ref{qs2}),(\ref{ep2})}{=}\e_i\sigma_i\pi_i\rho_{i+1}\e_i\stackrel{(\ref{qis4})}{=}\e_i\rho_i\e_i\rho_{i+1}\e_i\stackrel{(\ref{qis1}),(\ref{qis11})}{=}\pi_i\pi_{i+1}\e_i.\]Relation (\ref{qis2}) implies $\rho_i\pi_{i+1}\sigma_i\e_i=\rho_i\pi_i\pi_{i+1}\sigma_i\e_i$ and $\pi_i\rho_{i+1}\sigma_i\e_i=\rho_{i+1}\pi_i\rho_{i+1}\sigma_i\e_i$. Thus, by applying (\ref{su2}) and (\ref{qis2}), we obtain (\ref{su3}) and (\ref{su4}). On the other hand, by using (\ref{ep1}), (\ref{qs2}) and (\ref{qis11}), we get $\e_i\pi_i\pi_{i+1}=\e_i\pi_i\pi_{i+1}\e_i=\e_i\pi_i\e_i\rho_{i+1}\e_i=\e_i\pi_i\rho_{i+1}\e_i$, which proves (\ref{su5}).
\end{proof}

Relations (\ref{rh3}), (\ref{ep2}) and (\ref{ep3}) can be 
generalized, respectively, as follows:\begin{align}
\e_{i,j}\rho_i\e_{i,j}=\pi_i\e_{i,j}=\e_{i,j}\pi_i,&\qquad\e_{i,j}\rho_j\e_{i,j}=\pi_j\e_{i,j}=\e_{i,j}\pi_j,\label{NR1}\\
\rho_i\e_{i,j}\rho_i=\rho_i,&\qquad\rho_j\e_{i,j}\rho_j=\rho_j,\label{NR2}\\
\rho_i\e_{i,j}\rho_j=\pi_j\rho_i\sigma_{i,j}=\sigma_{i,j}\pi_i\rho_j,&\qquad\rho_j\e_{i,j}\rho_i=\pi_i\rho_i\sigma_{i,j}=\sigma_{i,j}\pi_j\rho_i,\label{NR3}
\end{align}

\begin{proposition}\label{normalOmega}
Every element $\gamma\in\Omega_n$ can be represented by a word that coincides with the normal form of $\mu(\gamma)$ when replacing the letters $\sigma_i,\rho_i,\e_i,\pi_i$, respectively, by $s_i,r_i,e_i,q_i$.
\end{proposition}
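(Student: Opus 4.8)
The plan is to take the element $\gamma\in\Omega_n$, fix a representative word $w$ for it in the free monoid on $\sigma_i,\rho_i,\e_i,\pi_i$, and rewrite $w$ modulo the congruence $\equiv$ into a word of the coarse shape $e\,r\,s\,e'$, where $e,e'$ are products of the $\e_{i,j}$ (so they represent elements of $P_n$), $r$ is a product of $\rho$'s and $\pi$'s, and $s$ is a word in the $\sigma_i$. Since $\mu$ is a homomorphism (Lemma \ref{epiphi}) and each rewriting step is an identity in $\Omega_n$, the image $\mu(w)=\mu(\gamma)$ never changes. Once the word additionally meets the minimality requirements of Remark \ref{lemNormRISn}, the substitution $\sigma_i\mapsto s_i$, $\rho_i\mapsto r_i$, $\e_i\mapsto e_i$, $\pi_i\mapsto q_i$ turns it into a word of the normal shape $e\,r\,g^*_p\,e'$ for $\mu(\gamma)$ in the sense of Corollary \ref{NFormISn}; by uniqueness of that normal form the substituted word must be precisely the normal form of $\mu(\gamma)$, which is the assertion.

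First I would produce the coarse shape $e\,r\,s\,e'$. Using (\ref{rh2}) and (\ref{qs3}), every $\sigma_i$ can be pushed to the right past any $\rho$ or $\pi$, only relabelling its index, while the $TS_n$ relations (\ref{ep2})--(\ref{ep3}) govern how $\sigma$'s cross the $\e$'s; since each maximal $\sigma,\e$-subword lies in $TS_n=\R(S_n)$ (Remark \ref{SubISnTSn}) it can be put in its own normal form $es$. Interior occurrences of $\e$ flanked by $\rho$'s and $\pi$'s are then removed or transformed by the mixing relations $\e_i\rho_j\e_i=\e_i\pi_j$ of (\ref{qis1}), $\rho_j\e_i\rho_j=\rho_j$ of (\ref{qis3}), and $\rho_i\e_i\rho_{i+1}=\sigma_i\pi_i\rho_{i+1}$ of (\ref{qis4}), together with their generalizations (\ref{NR1})--(\ref{NR3}); note these create no new interior $\e$'s. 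Afterwards the $\rho,\pi$-letters collapse into a single block $r$ via the idempotency and commutativity relations (\ref{rh1}),(\ref{qs1}) and the absorption $\pi_i\rho_i=\rho_i$ of (\ref{qis2}), and the surviving $\e$'s are gathered into the left block $e$ and the right block $e'$.

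Next I would normalize inside this shape so that none of the configurations (a)--(g) of Remark \ref{lemNormRISn} survives, and here Lemma \ref{remNormExceptions} is the essential tool: an unnecessary crossing of $s$ over two free points (type (a)) is deleted by (\ref{su1}); the tied analogues (types (b),(c),(d)) are absorbed by (\ref{su2})--(\ref{su4}); and configurations (e),(f),(g), where a vertical tie $\pi_i$ meets a horizontal tie lying on a line of $I$, are rewritten by (\ref{su5}), by $\pi_i\rho_i=\rho_i$, and by (\ref{NR1})--(\ref{NR3}). The generalizations of (\ref{su1})--(\ref{su4}) to $\sigma_{i,j}$ recorded in Lemma \ref{remNormExceptions} let me apply these moves for non-adjacent indices, so that $s$ is driven to the permutation of minimal crossings compatible with the line structure, and the ties in $r$ are pushed as far left and made as few as possible, matching exactly the description of the normal form in Proposition \ref{gensMonRISn} and Remark \ref{remNFITS_n}.

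The main obstacle is the termination and coherence of this normalization, since killing one forbidden pattern (a)--(g) may create another and sliding $\sigma$'s past $\e$'s can reshuffle the ties. To control this I would fix a complexity measure ordered lexicographically by the number of crossings of $s$, then the number of $\rho,\pi$-ties, then a weight recording how far the ties sit from the left, and verify that each of the moves above strictly decreases it; the relations of Lemma \ref{remNormExceptions} are calibrated precisely so that each of the seven patterns is traded for a strictly simpler word. As the measure is bounded below, the rewriting terminates, necessarily at a word free of all patterns (a)--(g), i.e.\ at the normal form. The delicate part is the bookkeeping that confirms every move lowers the measure and that the order in which $\sigma,\e,\rho,\pi$ are collected is immaterial; the remaining manipulations are routine applications of the listed relations.
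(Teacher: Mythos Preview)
Your proposal is correct and follows essentially the same strategy as the paper: first rewrite to a coarse shape $e\,r\,s\,e'$ using the commutation/swap relations, then eliminate the forbidden configurations (a)--(g) of Remark \ref{lemNormRISn} by means of Lemma \ref{remNormExceptions} and the generalized relations (\ref{NR1})--(\ref{NR3}). The only organizational difference is that the paper isolates explicitly the tail $v'$ of $\rho$-letters that cannot yet be commuted past the right $\e$-block and reduces it iteratively via (\ref{NR2})--(\ref{NR3}), whereas you fold this into your ``interior $\e$'' step and appeal instead to a lexicographic complexity measure for termination; both amount to the same computation.
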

\begin{proof}
Let $\gamma\in\Omega_n$, and let $w$ be a word representative of it. Note that if none of the $\pi_i$'s occur in $w$, 
and neither one of the $\rho_i$'s nor one of the $\e_i$'s occurs in $w$, 
then $\mu(\gamma)\in IS_n\cup TS_n$, so the result is clear due to Remark \ref{remNFITS_n} and Remark \ref{SubISnTSn}.

Relations (\ref{rh2}) and (\ref{qs2})--(\ref{qis2}) imply that $w\equiv t'u'tv'$, where $t$ and $t'$ represent elements of $TS_n$, and $u'$ is a word in the letters $\rho_i$'s and $\pi_i$'s. Remark \ref{SubISnTSn} implies that $t'\equiv e's''$ and $t\equiv s'e$, where $s',s''\in S_n$ and $e,e'$ are words in the letters $\e_{i,j}$. Furthermore, either $v'=1$ or $v'=\rho_kv$ and $e=e''\e_{\{h,k\}}$ for some $h\in[n]$. Thus $w\equiv e's''u's'ev'$. We treat firstly the case $v'=1$ and after $v'\neq1$.

If $v'=1$, due to (\ref{rh2}) and (\ref{qs3}), we have $w\equiv e'use=:w'$, where $s:=s''s'$ and $u:=s''^{-1}(u')$ is still a word in the $\rho_i$'s and $\pi_i$'s. Thus, the word obtained by replacing the letters in $w'$ satisfies the conditions of Remark \ref{lemNormRISn}. We will construct an equivalent word by repeating the following steps that distinguish the seven cases in Remark \ref{lemNormRISn}. 

If $s=\sigma_{i,j}\bar{s}$ such that $w'$ is of types (a)--(d), $\sigma_{i,j}$ can be removed by using (\ref{su1})--(\ref{su4}) and their generalizations in Lemma \ref{remNormExceptions}. See Figure \ref{figurethisix}.
\begin{figure}[H]
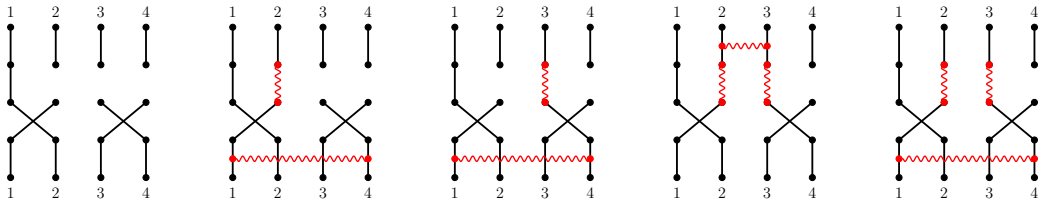

\figurethisix
\caption{Elements obtained when removing $\sigma_2$ from the ones of types (a)--(d) in Figure \ref{figureeig}}
\label{figurethisix}
\end{figure}
If $w'$ is of types (e)--(g) we will use (\ref{qs2}), (\ref{qis1}), and (\ref{su5}) in Lemma \ref{remNormExceptions}. See Figure \ref{figurenin}. Indeed, if $w'$ is of type (e), $\pi_i$ in $u$ is replaced by $\rho_i$ and $e$ is replaced by $e\e_{\{s(i),s(j)\}}$ for some $j$. If $w'$ is of type (f), $\pi_i$ in $u$ is replaced by $\rho_i$ and $e'$ is replaced by $\e_{i,s^{-1}(j)}e'$ for some $j$. If $w'$ is of type (g), $\pi_j$ is replaced by $\rho_j$, $e'$ is replaced by $\e_{i,j}e'$ if $\e_{i,j}$ does not occur in $e'$, and $e$ is replaced by $e\e_{s(i),s(j)}$ if $\e_{s(i),s(j)}$ does not occur in $e$.
\begin{figure}[H]
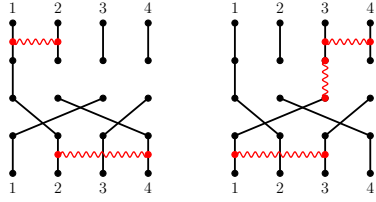

\figurenin
\caption{Elements obtained from the ones of types (e)--(g) in Figure \ref{figureeig}}
\label{figurenin}
\end{figure}
Now, by using (\ref{qis2}), we replace $u$ by a resorted word $\hat{\rho}_1\cdots\hat{\rho}_k$ with $\hat{\rho}_i\in\{\rho_i,\pi_i\}$. Thus, we obtain a normal form that is equivalent to $w$.

If $v'\neq1$, without loss of generality, we will assume that none of the $\e_{p,q}$'s in $e$ can be moved next to $t'$. Thus, if $\e_{p,q}$ occurs in $e$, then $u'$ must contain at least one element of $\{\rho_i,\rho_j\}$, where $i=s'^{-1}(p)$ and $j=s'^{-1}(q)$, otherwise $\e_{p,q}$ can be moved next to $t'$. In particular, the element $\e_{\{h,k\}}$ occurring in $e$ cannot be moved next to $t'$, hence $u'$ contains $\rho_i$ or $\rho_j$ with $s'(i)=h$ and $s'(j)=k$. If $u'\equiv u''\rho_i$, then, due to (\ref{NR3}), we obtain:\[w\equiv t'u''\rho_is'e''\e_{\{h,k\}}\rho_kv\equiv t'u''s'(\rho_h\e_{\{h,k\}}\rho_k)e''v\equiv t'u''s'(\pi_k\rho_h\sigma_{h,k})e''v\equiv t'(u''\pi_j\rho_i)(s'\sigma_{h,k}e'')v.\]Now, if $u'=u''\rho_j$, relation (\ref{NR2}) implies the following:\[w\equiv t'u''\rho_js'e''\e_{\{h,k\}}\rho_kv\equiv t'u''s'(\rho_k\e_{\{h,k\}}\rho_k)e''v\equiv t'u''s'\rho_ke''v\equiv t'(u''\rho_j)(s'e'')v.\]If $v\neq1$, we repeat the process, and so on until we get a word as the previous case.
\end{proof}

\begin{remark}\label{remAboutNFTorsion}
Note that neither the proof of Lemma \ref{remNormExceptions} nor the proof of Proposition \ref{normalOmega} use the torsion of $\Omega_n$ given at the beginning of relation (\ref{sgm}). This implies that the monoid $\Omega_n^+$ obtained by removing the torsion of $\Omega_n$, also satisfies Lemma \ref{epiphi} and Proposition \ref{normalOmega}.
\end{remark}

\begin{theorem}\label{PresRISn}
The monoids $\R(\IS_n)$ and $\Omega_n$ are isomorphic. Thus, $\R(\IS_n)$ is presented by generators $s_1,\ldots,s_{n-1}$, $r_1,\ldots,r_n$, $e_1,\ldots,e_{n-1}$, $q_1,\ldots,q_n$ satisfying {\normalfont(\ref{s})--(\ref{r3})} and {\normalfont(\ref{TSn1})--(\ref{TSn3})}, subject to the following relations:
\begin{align} 
q_i^2=q_i&,\quad q_iq_j=q_jq_i,\label{ris1}\\
q_ie_j&=e_jq_i,\label{ris2}\\
s_iq_j&=q_{s_i(j)}s_i,\label{ris3}\\
e_ir_je_i=e_iq_j,\quad\text{if }j=i,i+1&,\quad e_ir_j=r_je_i,\quad\text{if }j\neq i,i+1,\label{ris4}\\
r_iq_j=q_jr_i&,\quad q_ir_i=r_i,\label{ris5}\\
r_je_ir_j&=r_j,\quad\text{if }j=i,i+1,\label{ris6}\\
r_ie_ir_{i+1}&=s_iq_ir_{i+1}.\label{ris7}
\end{align}
\end{theorem}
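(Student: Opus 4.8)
The plan is to upgrade the epimorphism $\mu\colon\Omega_n\to\R(\IS_n)$ of Lemma \ref{epiphi} to an isomorphism by proving that it is injective; the presentation of $\R(\IS_n)$ then follows immediately, since the relabeling $\sigma_i\mapsto s_i$, $\rho_i\mapsto r_i$, $\e_i\mapsto e_i$, $\pi_i\mapsto q_i$ carries the defining relations (\ref{sgm})--(\ref{qis4}) of $\Omega_n$ exactly to (\ref{s})--(\ref{r3}), (\ref{TSn1})--(\ref{TSn3}) and (\ref{ris1})--(\ref{ris7}). Thus the real content is the injectivity of $\mu$.

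The key observation is that the generator relabeling extends to an isomorphism $\phi$ of free monoids between words in $\sigma_i,\rho_i,\e_i,\pi_i$ and words in $s_i,r_i,e_i,q_i$, because it is a bijection between the two generating alphabets. First I would invoke Proposition \ref{normalOmega}: every $\gamma\in\Omega_n$ admits a representative word $w_\gamma$ in the $\Omega_n$-generators whose image $\phi(w_\gamma)$ is precisely the normal form of $\mu(\gamma)$ furnished by Corollary \ref{NFormISn}. Then, given $\gamma_1,\gamma_2\in\Omega_n$ with $\mu(\gamma_1)=\mu(\gamma_2)$, the uniqueness of the normal form in $\R(\IS_n)$ forces $\phi(w_{\gamma_1})=\phi(w_{\gamma_2})$ as words; since $\phi$ is injective, $w_{\gamma_1}=w_{\gamma_2}$, and as each $w_{\gamma_i}$ represents $\gamma_i$ this yields $\gamma_1=\gamma_2$. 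Hence $\mu$ is injective, and being already surjective it is an isomorphism, so $\R(\IS_n)\simeq\Omega_n$ and the displayed presentation holds.

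Almost all of the difficulty has already been discharged before the statement: the genuinely hard step is Proposition \ref{normalOmega}, whose proof reduces an arbitrary $\Omega_n$-word to the canonical shape $erg^*_pe'$ by systematically eliminating the seven obstructions of Remark \ref{lemNormRISn} through the auxiliary identities of Lemma \ref{remNormExceptions}. Given that machinery, the theorem itself is the standard ``surjection plus uniqueness of normal form gives injectivity'' argument. The one point I would take care to confirm is that the normal form supplied by Corollary \ref{NFormISn} depends solely on the element of $\R(\IS_n)$ and not on any chosen representative, i.e. that it is genuinely unique; this is exactly what the minimality conditions of Remark \ref{remNFITS_n} together with the exhaustive list in Remark \ref{lemNormRISn} guarantee, so no additional work is needed there.

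I expect no substantial obstacle in the theorem proof proper, since it is purely a synthesis of the preceding results; the subtle bookkeeping lies entirely upstream, in checking that the reduction in Proposition \ref{normalOmega} never leaves a word that still violates one of conditions (a)--(g), and that the resulting canonical word is the image under $\phi$ of a normal form rather than merely of some representative. As long as that has been verified, the injectivity argument above closes the proof.
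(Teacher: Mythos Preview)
Your proposal is correct and matches the paper's own proof essentially verbatim: the paper simply states that Proposition~\ref{normalOmega} forces any two elements of $\Omega_n$ with the same image under $\mu$ to be represented by the same word, whence $\mu$ is an isomorphism. Your write-up unpacks this in more detail (the bijection of alphabets, the uniqueness of the normal form from Corollary~\ref{NFormISn}, the upstream burden on Proposition~\ref{normalOmega}), but the argument is the same.
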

\begin{proof}
Proposition \ref{normalOmega} implies that two elements of $\Omega_n$ that are sent by $\mu$ to the same element of $\R(\IS_n)$, are represented by the same word. Thus, $\mu$ is an isomorphism.
\end{proof}

\begin{remark}\label{dualPresentationRISn}
By applying Tietze's transformations, the monoid $\R(\IS_n)$ can be presented by generators $s_{i,j}$ satisfying (\ref{genSnRels}), generators $e_{i,j}$ satisfying (\ref{ee1})--(\ref{ee3}) and (\ref{genActSnPn}), generators $r_1,\ldots,r_n$ satisfying (\ref{r1}) and (\ref{gr23}), and generators $q_1,\ldots,q_n$ satisfying (\ref{ris1}) and (\ref{ris5}), subject to the following relations:\begin{align}
e_{i,j}q_k&=q_ke_{i,j},\\
s_{i,j}q_k&=q_{s_{i,j(k)}}s_{i,j},\\
e_{i,j}r_ie_{i,j}&=q_ie_{i,j},\\
e_{i,j}r_k&=r_ke_{i,j},\quad k\neq i,j,\\
r_ie_{i,j}r_i&=r_i,\\
r_ie_{i,j}r_j&=s_{i,j}q_ir_j.
\end{align}
\end{remark}

\section{Planar monoids}\label{PlanMons}

In this final section we find a presentation of the ramified monoid of $\I_n$ (Theorem \ref{presRIn}) defined in (\ref{InPres}) and introduce the concept of planar ramified monoid. This concept follows that of planar monoid defined in \cite{Jo1983}. In Theorem \ref{presPRIn} we get a presentation of the planar ramified monoid $\PR(\I_n)$ of $\I_n$. Besides the cardinality of $\mathcal{PR}(\I_n)$ is computed recursively, see Proposition 8. 

\subsection{}

A set partition of $\CC_n$ is said to be {\it planar} \cite{Jo1993,HaRa2005} if it can be represented by a diagram with noncrossing generalized lines. As concatenation preserves planarity, for every submonoid $M$ of $\CC_n$, the set of planar set partitions of $M$ forms a submonoid of it, which is called the {\it planar monoid} of $M$ and is denoted by $\P(M)$. In particular, $\P(\CC_n)$ is known as the {\it planar partition monoid} and is usually denoted by $\PP_n$.

A remarkable submonoid of $\PP_n$ is the {\it Jones monoid} $\J_n$ \cite{Jo1983,LaFi2006}, which is formed by the planar set partitions of $\CC_n$ whose blocks have exactly two elements. This monoid is presented by the {\it tangle generators} $t_1,\ldots,t_{n-1}$, subject to the following relations:\begin{equation}\label{JnRels}
t_i^2=t_i,\quad t_it_j=t_jt_i\quad\text{if }|i-j|>1,\quad t_it_jt_i=t_i\quad\text{if }|i-j|=1.
\end{equation}

For each $i\in[n-1]$, the tangle $t_i$ can be realised as the set partition formed by the blocks $\{i,i+1\}$, $\{2n-i+1,2n-i\}$, and $\{k,2n-k+1\}$ for all $k\in[n]\backslash\{i,i+1\}$, see Figure \ref{figuretwenty}.
\begin{figure}[H]
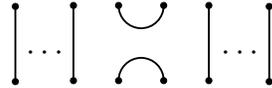

\figuretwenty
\caption{Generator $t_i$.}
\label{figuretwenty}
\end{figure}

The cardinality of $\J_n$ is the $n$th Catalan number $c_n$ \cite[A000108]{OEIS}. Note that the Jones monoid is the planar monoid of the Brauer monoid \cite[p. 416]{KuMaCEJM2006}. Beside, it is well known that $\PP_n$ is isomorphic to $\J_{2n}$, see for example \cite[p. 873]{HaRa2005}, hence $|\PP_n|=c_{2n}$. This isomorphism can be graphically shown as the example in Figure \ref{ExIsomPPnJ2n}, where the dashed lines can be removed.
\begin{figure}[H]
\figuretwotwo
\caption{}
\label{ExIsomPPnJ2n}
\end{figure}

For $i\in[n-1]$, let $h_i$ be the set partition with blocks $\{i,i+1,2n-i+1,2n-i\}$ and $\{k,2n-k+1\}$ for all $k\not\in\{i,i+1\}$. This set partition will be a generator of $\PP_n$ and is represented as in Figure \ref{figuretwothr}.
\begin{figure}[H]
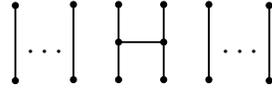

\figuretwothr
\caption{Generator $h_i$.}
\label{figuretwothr}
\end{figure}

The partitions $h_i$ and $r_i$ generate $\PP_n$. In fact, due to \cite[Theorem 1.11.(b)]{HaRa2005}, the isomorphism above from $\J_{2n}$ to $\PP_n$ is made explicit by the mapping\begin{align}
t_{2i}&\mapsto h_i,\\t_{2i-1}&\mapsto r_i,\end{align}so that the monoid $\PP_n$ can be presented by generators $r_1,\ldots,r_n$, $h_1,\ldots,h_{n-1}$ subject to the following relations, coming from relations (\ref{JnRels}):\begin{align}
r_i^2&=r_i,\quad r_ir_j=r_jr_i\quad\text{for all }i,j,\label{BnT1}\\
h_i^2&=h_i,\quad h_ih_j=h_jh_i\quad\text{for all }i,j,\textbf{\label{BnT2}}\\
h_ir_j&=r_jh_i\quad\text{if }|i-j|>1,\label{BnT3}\\
h_ir_ih_i&=h_i=h_ir_{i+1}h_i\quad\text{for all }i,\label{BnT4}\\
r_ih_ir_i&=r_i,\quad r_{i+1}h_ir_{i+1}=r_{i+1}\quad\text{for all }i.\label{BnT5}
\end{align}
See Figure \ref{figuretwofiv} for an example.
\begin{figure}[H]
\figuretwofiv
\caption{}
\label{figuretwofiv}
\end{figure}

\begin{definition}
For a planar submonoid $M$ of $\CC_n$, the {\it planar ramified monoid} $\PR(M)$ of $M$ is the monoid formed by the pairs $(I,J)$ such that $I\in M$ and $J\in\PP_n$ with $I\preceq J$.
\end{definition}

\subsection{Monoids of noncrossing partitions}

Recall that $\I_n$ is the submonoid of $\IS_n$ generated by $r_1,\ldots,r_n$, which is isomorphic to the free idempotent commutative monoid, presented as in (\ref{InPres}). Note that $\I_n$ corresponds to a collection of noncrossing partitions of $[2n]$, and elements of $\I_n^{\bullet}$ are noncrossing partitions of $[2n-1]$. All these elements will be called {\it$r$--partitions}. In what follows we study the ramified and planar ramified monoids of $r$--partitions.

\subsubsection{The monoids $\R(\I_n)$ and $\R(\I_n^{\bullet})$}\label{5.2.1}

The cardinalities of $\R(\I_n)$ and $\R(\I_n^{\bullet})$ are given by the following formulas:\[|\R(\I_n)|=\sum_{k=0}^n\binom{n}{k}b_{2n-k},\qquad|\R(\I_n^{\bullet})|=\sum_{k=0}^{n-1}\binom{n-1}{k}b_{2n-1-k},\]because, if a set partition $I$ of $[m]$, with $m\in\{2n-1,2n\}$, has $k$ lines, the number of blocks of $I$ is $m-k$.

\begin{lemma}\label{fixLinePerm}
Every nontrivial permutation $s\in\Sym_n$ has a word representative written in letters $s_{i,j}$ such that $s(i)\neq i$ and $s(j)\neq j$, that is, vertical lines keep vertical.
\end{lemma}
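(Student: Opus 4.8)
The plan is to reduce the statement to the classical disjoint-cycle factorization of a permutation. First I would introduce the support $\operatorname{supp}(s)=\{k\in[n]\mid s(k)\neq k\}$; since $s$ is nontrivial this set is nonempty, and in the diagrammatic picture it is exactly the set of strands that are \emph{not} vertical lines. Writing $s=\gamma_1\cdots\gamma_m$ as a product of pairwise disjoint cycles, each of length at least two, the entries of the $\gamma_t$ partition $\operatorname{supp}(s)$, and every point appearing in some $\gamma_t$ is moved by $s$.

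Next I would expand each cycle into transpositions of its own entries. For a cycle $\gamma_t=(a_1\,a_2\,\cdots\,a_\ell)$ one has a standard factorization into $\ell-1$ transpositions of elements of $\{a_1,\dots,a_\ell\}$ (for instance the consecutive transpositions $(a_1\,a_2),(a_2\,a_3),\dots,(a_{\ell-1}\,a_\ell)$ in the order appropriate to the composition convention of $\Sym_n$), in which every transposition swaps two elements of $\{a_1,\dots,a_\ell\}\subseteq\operatorname{supp}(s)$. Concatenating these factorizations over $t=1,\dots,m$ gives a word representing $s$ all of whose letters are transpositions $(a,b)$ with $a,b\in\operatorname{supp}(s)$. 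Rewriting each such transposition as $s_{i,j}$ with $i=\min(a,b)$ and $j=\max(a,b)$ puts the word in the required alphabet $\{s_{i,j}\mid i<j\}$, and since $\{i,j\}=\{a,b\}\subseteq\operatorname{supp}(s)$ we have $s(i)\neq i$ and $s(j)\neq j$, which is exactly the asserted property (the condition being symmetric in $i$ and $j$).

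The content here is genuinely elementary, so I do not expect a real obstacle; the only points that need care are bookkeeping ones. First, the condition $s(i)\neq i,\ s(j)\neq j$ must be read as a statement about the \emph{global} permutation $s$, not about the partial products formed by prefixes of the word; this is automatic because every index used lies in $\operatorname{supp}(s)$, independently of where the letter sits in the word. Second, one should confirm that a strand $k$ with $s(k)=k$ is untouched by the whole word: indeed no letter $s_{i,j}$ has $i=k$ or $j=k$, so the vertical line at position $k$ is preserved throughout, which is the diagrammatic reformulation stated in the lemma. If one prefers to avoid invoking the cycle-to-transposition expansion, a short induction on $|\operatorname{supp}(s)|$ (peel off one transposition $s_{k,s(k)}$ for some moved point $k$, which strictly shrinks the support) supplies the same factorization.
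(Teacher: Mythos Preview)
Your argument is correct. The paper takes a slightly different but equally elementary route: instead of the cycle decomposition, it lists the support as $a_1<\cdots<a_m$, regards the restriction of $s$ to its support as a permutation $t\in S_m$ via the relabelling $a_i\leftrightarrow i$, and then pulls back \emph{any} word for $t$ in the Coxeter generators through the substitution $s_i\mapsto s_{a_i,a_{i+1}}$. Your cycle-decomposition argument is more self-contained, since it does not implicitly rely on checking that this substitution respects the Coxeter relations (equivalently, that $S_m$ embeds in $S_n$ as the pointwise stabilizer of $[n]\setminus\{a_1,\dots,a_m\}$). The paper's version, on the other hand, produces a word that uses only transpositions of \emph{consecutive} elements of the support; this extra feature is not needed for the subsequent applications (Proposition~\ref{gensMonRIn} only requires that both indices lie in the support), but it mirrors more closely the shape of the normal form used later.
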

\begin{proof}
Let $a_1<\cdots<a_m$ be the elements of $[n]$ satisfying $s(a_i)\neq a_i$ for all $i\in[m]$, and let $t\in S_m$ defined by $t(i)=j$ whenever $s(a_i)=a_j$. Thus, every word representative of $t$ defines a word representative of $s$ by replacing each letter $s_i$ of $t$ by $s_{a_i,a_{i+1}}$.
\end{proof}

In particular, Lemma \ref{fixLinePerm} can be applied to the completed permutation of partial permutations. For instance:\figurethifiv{So, $t=s_2s_1$, $s=s_{4,6}s_{1,4}$, involving no indices $3$ and $5$, and $g=r_2\cdot s_{4,6}s_{1,4}$ is represented as follows}

Now, for $i,j$ with $i<j$, denote by $x_{i,j}\in\R(\IS_n)$ the ramified partition $(r_ir_j,s_{i,j})$, which is represented as in Figure \ref{figuretwosev}.
\begin{figure}[H]
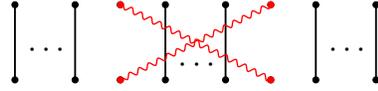

\figuretwosev
\caption{Generator $x_{i,j}$.}
\label{figuretwosev}
\end{figure}
Notice that $(r_ir_j,s_{i,j})=(r_i,1)(r_j,1)(1,s_{i,j})=q_iq_js_{i,j}$, thus, for each $i<j$, we have\begin{equation}\label{remVerTies}q_i x_{i,j}=x_{i,j}=x_{i,j}q_i\quad\text{and}\quad r_ir_jx_{i,j}=r_ir_js_{i,j}=s_{i,j}r_jr_i=x_{i,j}r_jr_i=r_jr_i.\end{equation}

\begin{proposition}\label{gensMonRIn}
The monoid $\R(\I_n)$ is generated by $r_1,\ldots,r_n$ and $q_1,\ldots,q_n$ together with $e_{i,j}$ and $x_{i,j}$ such that $i,j\in[n]$ and $i<j$.
\end{proposition}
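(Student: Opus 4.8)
The plan is to start from the normal form already established for the larger monoid and then show that, in the $\I_n$--case, the permutation factor can be rewritten using only the listed generators. Concretely, I would view $g=(I,J)\in\R(\I_n)$ as an element of $\R(\IS_n)$ and invoke Corollary \ref{NFormISn}: it admits a decomposition $g=e\,r\,g_p^*\,e'$, where $e=(1,J\cap[n])$ and $e'$ are products of the generators $e_{i,j}$, the factor $r=\hat r_{i_1}\cdots\hat r_{i_k}$ is a word in the $r_i$'s and $q_i$'s, and $g_p^*$ is the completed permutation of $g^*$. Since $e,e'$ already lie in $\langle e_{i,j}\rangle$ and $r\in\langle r_i,q_i\rangle$, the whole task reduces to expressing the middle factor $r\,g_p^*$ as a word in $r_i$, $q_i$ and $x_{i,j}$.

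First I would exploit that $I\in\I_n$ has only vertical lines. Every line of $g^*$ coming from a line of $I$ is then vertical, so the completed permutation $g_p^*$ fixes each line--index of $I$ and permutes only the point--indices $P=\{i_1,\dots,i_k\}$. By Lemma \ref{fixLinePerm} we may therefore write $g_p^*=s_{a_1,b_1}\cdots s_{a_m,b_m}$ with all $a_\ell,b_\ell\in P$. Next I would read off the through--structure of the middle factor $c:=r\,g_p^*$. Its first component is $I$ itself: every point--strand is turned into a point by $r$, while the vertical lines at the line--indices survive because $g_p^*$ fixes them. Its second component is obtained from $g_p^*$ by cutting the through--lines at those indices carrying a factor $(r_i,r_i)$; the surviving through--lines then define a matching between the tops and bottoms of the remaining point--indices, which, after relabelling bottoms by indices, is a permutation $\tau$ of a subset $T\subseteq P$.

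Finally I would realise $c$ generator by generator along $\tau$: the point--indices outside $T$ are isolated points contributed by the $r_i$'s, the fixed points of $\tau$ are vertical through--ties contributed by the $q_i$'s, and a nontrivial orbit of $\tau$ is built from the $x_{i,j}$'s. The key computation is that at the level of through--matchings $x_{a,b}$ behaves exactly like the transposition $s_{a,b}$: since $x_{a,b}=q_aq_bs_{a,b}$, the commutations (\ref{r2}) and (\ref{ris3}) give, for instance, $x_{a,b}x_{b,c}=q_aq_bq_c\,s_{a,b}s_{b,c}$, so that products of $x$'s realise arbitrary through--permutations of pointed strands, while $r_ar_bs_{a,b}=r_ar_b$ (true because $s_{a,b}$ acts trivially on the points $a,b$) shows that transpositions acting on already isolated points are absorbed. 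The main obstacle is precisely the bookkeeping in this last step: one must check that, after cutting the through--lines at the $(r_i,r_i)$--indices, the surviving lines pair the $q$--type strands consistently, so that the auxiliary completion hidden in $g_p^*$ disappears without residue. This is guaranteed by Lemma \ref{fixLinePerm} together with the absorption identities (\ref{remVerTies}), (\ref{r2}) and (\ref{ris3}), and can alternatively be verified directly on the tied diagrams.
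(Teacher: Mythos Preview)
Your overall strategy---reduce to the normal form $erg_p^*e'$ in $\R(\IS_n)$ via Corollary \ref{NFormISn}, then use Lemma \ref{fixLinePerm} to write $g_p^*$ as a word in transpositions $s_{a,b}$ with $a,b$ among the point--indices of $I$---is exactly the paper's approach, and your closing computations ($x_{a,b}x_{b,c}=q_aq_bq_c\,s_{a,b}s_{b,c}$ and $r_ar_bs_{a,b}=r_ar_b$) are precisely the absorption facts needed. The paper simply replaces each letter $s_{a,b}$ in that word by $x_{a,b}$, calls the result $x_g$, and observes $rg_p^*=rx_g$ because every extra $q_a$ so introduced is absorbed by the factor $\hat r_a\in\{r_a,q_a\}$ already sitting in $r$; nothing more is required.

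Where your write--up goes astray is the detour through a ``through--permutation $\tau$ of a subset $T\subseteq P$''. After cutting the second component of $c=rg_p^*$ at the $r$--type indices $R$, the surviving lines match the top $q$--type set $Q$ with $g_p^*(Q)$ at the bottom, and nothing forces $g_p^*(Q)=Q$. For instance, take $n=2$, $I=r_1r_2$ and $J=(\{1,3\},\{2\},\{4\})$: then $Q=\{1\}$, $R=\{2\}$, $g_p^*=s_1$, and the unique surviving through--line runs from top $1$ to bottom $2$, so no permutation of $T=\{1\}$ is defined. Your recipe ``$r_i$ for $i\notin T$, $q_i$ for fixed points of $\tau$, $x_{i,j}$ for nontrivial orbits'' would output $r_2q_1$, whose second component is $r_2\neq r_2s_1$, hence $r_2q_1\neq c$. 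The fix is to drop $\tau$ entirely and do what your last paragraph already hints at: substitute $x_{a,b}$ for each $s_{a,b}$ in the word for $g_p^*$ supplied by Lemma \ref{fixLinePerm}, and let the resulting $q$'s be absorbed by $r$ via (\ref{remVerTies}) and (\ref{ris5}). That direct replacement is the paper's entire argument.
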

\begin{proof}
Corollary \ref{NFormISn} implies that every element $g=(I,J)\in\R(\I_n)$ can be uniquely written in $\R(\IS_n)$ as $g=erg_p^*e'$. Further, Lemma \ref{fixLinePerm} implies that $g_p^*$ has a word representative $u$ in letters $s_{i,j}$ such that $\{i\}$ and $\{j\}$ are points of $I$. Thus, $g=erx_ge'$, where $x_g$ is the element obtained by replacing each letter $s_{i,j}$ by $x_{i,j}$ in the word $u$ representing $g_p^*$. Finally, $g=eqx_ge'$, where $q$ is obtained by removing the ties in $r$ connecting ties in $x_g$.
\end{proof}

\begin{corollary}[Normal form]\label{NFormRInO}
The word $eqx_ge'$ constructed during the proof of Proposition \ref{gensMonRIn} is a normal form in the free monoid $(\{r_i,q_i\mid i\in[n]\}\cup P_n\cup\langle x_{i,j}\mid i<j\rangle)^*$.
\end{corollary}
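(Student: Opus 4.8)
The word $eqx_ge'$ has already been produced in the proof of Proposition \ref{gensMonRIn}, so what remains is to show that the assignment $g\mapsto eqx_ge'$ is a well-defined injection, i.e. a genuine normal form. The plan is to transport this from the uniqueness of the $\R(\IS_n)$ normal form $erg_p^*e'$ of Corollary \ref{NFormISn}, using the inclusion $\R(\I_n)\subseteq\R(\IS_n)$ and the identity $x_{i,j}=q_iq_js_{i,j}$ noted before (\ref{remVerTies}).

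First I would check that each of the four blocks of the word is a function of $g$ alone. The letters $e,e'\in P_n$ are read off from the blocks of $J$ on $[n]$ and on $[n+1,2n]$ (with the slight adjustment of $e'$ described in Proposition \ref{gensMonRIn}), so they are determined by $g$. The completed permutation $g_p^*$ is determined by $g$, and Corollary \ref{NFormISn} together with Lemma \ref{fixLinePerm} fixes a canonical word for it in letters $s_{i,j}$ whose indices are points of $I$; then $x_g$ is its image under $s_{i,j}\mapsto x_{i,j}$. To see that this does not depend on the chosen reduced word, I would verify that the homogeneous relations of (\ref{genSnRels}) lift to the $x_{i,j}$: for example, using $s_{i,j}q_k=q_{s_{i,j}(k)}s_{i,j}$ one computes $x_{i,j}x_{j,k}=q_iq_jq_ks_{j,k}s_{i,k}=x_{j,k}x_{i,k}$, while the disjoint and nested commutations lift verbatim. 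Finally $q$ is obtained from the idempotent part $r$ of the $\R(\IS_n)$ normal form by deleting the vertical ties that are already carried by $x_g$.

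Next I would make the correspondence invertible. From $eqx_ge'$ I would evaluate in $\R(\IS_n)$ by substituting $x_{i,j}=q_iq_js_{i,j}$, and then use (\ref{remVerTies}) and (\ref{ris3}) to move every permutation letter to the right and gather the idempotent letters to the left; this reinstates exactly the vertical ties that were deleted in passing from $r$ to $q$, and returns the element to its $\R(\IS_n)$ normal form $erg_p^*e'$. Consequently, if two words $eqx_ge'$ and $\bar e\,\bar q\,x_{\bar g}\,\bar e'$ represent the same element of $\R(\I_n)$, they represent the same element of $\R(\IS_n)$ and hence share the same normal form there; the uniqueness in Corollary \ref{NFormISn} then forces $e=\bar e$, $e'=\bar e'$, $g_p^*=\bar{g}_p^*$ and equality of the idempotent factors, whence $x_g=x_{\bar g}$ and $q=\bar q$. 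This is the desired injectivity.

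The step I expect to be the main obstacle is the tie bookkeeping underlying this inversion: I must confirm that deleting a vertical tie $q_k$ at an index $k$ moved by $g_p^*$ and later recreating it through the factor $q_iq_j$ hidden inside each $x_{i,j}$ form an exact inverse pair, so that the information of which points of $I$ carry vertical ties is neither lost nor duplicated. This is local at each index and reduces to the identities $q_ix_{i,j}=x_{i,j}=x_{i,j}q_i$ of (\ref{remVerTies}), but it must be reconciled with the distinction—already built into Corollary \ref{NFormISn}—between a genuine vertical tie $q_k$ and a point whose tie data has been absorbed into the endpoints $e$ and $e'$.
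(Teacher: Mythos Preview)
Your approach is correct and essentially matches the paper's. The paper states this corollary without a separate proof, treating it as an immediate consequence of the construction in Proposition~\ref{gensMonRIn}: since that construction starts from the unique $\R(\IS_n)$ normal form $erg_p^*e'$ of Corollary~\ref{NFormISn} and then applies deterministic replacements (Lemma~\ref{fixLinePerm} for the choice of $s_{i,j}$-word, the substitution $s_{i,j}\mapsto x_{i,j}$, and the pruning $r\mapsto q$), uniqueness is inherited. Your inversion argument via $x_{i,j}=q_iq_js_{i,j}$ and the identities in (\ref{remVerTies}) is exactly the right way to make this inheritance explicit, and your concern about the tie bookkeeping is the one nontrivial check, which you resolve correctly. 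One minor point: because the alphabet of the free monoid includes $\langle x_{i,j}\mid i<j\rangle$ as \emph{elements} rather than as words, your verification that different reduced words for $g_p^*$ yield the same $x_g$ is not strictly needed for the statement as written, though it does no harm.
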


For instance, consider $g=(I,J)$ with $I\preceq J$ defined as follows:\figurethirty Then, we obtain $g=erx_ge'$, where\figurethione Hence, $g=e_{1,3}\cdot q_2r_3q_4q_5\cdot x_{2,4}\cdot e_{2,3}=e_{1,3}\cdot r_3q_5\cdot x_{2,4}\cdot e_{2,3}$, which is represented as follows\figurethitwo

\begin{remark}
Due to Lemma \ref{fixLinePerm}, the element $x_g$ in Corollary \ref{NFormRInO} can be obtained diagrammatically just by replacing every non vertical line of $g_p^*$ by a tie. Thus, by Remark \ref{minCrosNormISn}, the crossings of $x_g$ are minimal.
\end{remark}

Let $\Upsilon_n$ be the monoid generated by $\rho_1,\ldots,\rho_n$, $\pi_1,\ldots,\pi_n$, and $\e_{i,j}$, $\chi_{i,j}$ with $i,j\in[n]$ and $i<j$, satisfying (\ref{rh1}), (\ref{qs1}), (\ref{qis2}), subject to the following relations:\begin{align}
\e_{i,j}^2=\e_{i,j},\quad\e_{i,j}\e_{r,s}&=\e_{r,s}\e_{i,j},\quad\e_{i,j}\e_{i,k}=\e_{i,j}\e_{j,k}=\e_{i,k}\e_{j,k},\label{ups04}\\
\e_{i,j}\rho_k&=\rho_k\e_{i,j},\quad k\not\in\{i,j\},\label{ups05}\\
\rho_i\e_{i,j}\rho_i&=\rho_i,\quad\rho_j\e_{i,j}\rho_j=\rho_j,\label{ups06}\\
\e_{i,j}\rho_i\e_{i,j}&=\pi_i\e_{i,j},\quad\e_{i,j}\rho_j\e_{i,j}=\pi_j\e_{i,j}\label{ups07}\\
\e_{i,j}\pi_k&=\pi_k\e_{i,j},\label{ups08}\\
\chi_{i,j}^2&=\pi_i\pi_j,\quad\chi_{i,j}\chi_{j,k}=\chi_{j,k}\chi_{i,k}=\chi_{i,k}\chi_{i,j},\label{ups09}\\
\chi_{i,j}\chi_{a,b}&=\chi_{a,b}\chi_{i,j},\quad[i,j],[a,b]\text{ disjoint or nested},\label{ups10}\\
\chi_{i,j}\rho_k&=\rho_{s(k)}\chi_{i,j},\quad s:=(i\,\,j),\label{ups11}\\
\chi_{i,j}\pi_k&=\pi_k\chi_{i,j},\quad\chi_{i,j}\pi_i=\chi_{i,j}=\chi_{i,j}\pi_j,\label{ups12}\\
\chi_{i,j}\e_{p,q}&=\e_{s(p),s(q)}\chi_{i,j},\quad s:=(i\,\,j),\label{ups13}\\
\chi_{i,j}\e_{i,j}&=\e_{i,j}\pi_i\pi_j.\label{ups14}
\end{align}
In what follows of this section, we denote by $\equiv$ the congruence generated by the relations that define $\Upsilon_n$.

\begin{lemma}
The mapping $\rho_i\mapsto r_i$, $\pi_i\mapsto q_i$, $\e_i\mapsto e_i$, $\chi_{i,j}\mapsto x_{i,j}$ defines an epimorphism $\eta:\Upsilon_n\to\R(\I_n)$.
\end{lemma}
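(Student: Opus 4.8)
The plan is to invoke the universal property of the monoid presentation of $\Upsilon_n$: the assignment on generators extends to a (necessarily unique) homomorphism $\eta$ exactly when the proposed images $r_i,q_i,e_{i,j},x_{i,j}$ satisfy in $\R(\I_n)$ every defining relation of $\Upsilon_n$, namely (\ref{rh1}), (\ref{qs1}), (\ref{qis2}) and (\ref{ups04})--(\ref{ups14}). Once this verification is complete, surjectivity is immediate from Proposition \ref{gensMonRIn}, which asserts that these very elements generate $\R(\I_n)$; hence $\eta$ is an epimorphism. I would carry out all the checking inside the ambient monoid $\R(\IS_n)$, whose presentation is available from Theorem \ref{PresRISn} and, in the $s_{i,j},e_{i,j}$ form, from Remark \ref{dualPresentationRISn}; since $\R(\I_n)$ is a submonoid, any equality established in the ambient monoid restricts to the one we want.

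The relations not involving $\chi_{i,j}$ are direct translations of relations already recorded in $\R(\IS_n)$. Indeed (\ref{rh1}) is (\ref{r1}); (\ref{qs1}) is (\ref{ris1}); (\ref{qis2}) is (\ref{ris5}); (\ref{ups04}) is merely the $P_n$-relations (\ref{ee1})--(\ref{ee3}) for the generators $e_{i,j}$; the commutations (\ref{ups05}) and (\ref{ups08}) are the relations $e_{i,j}r_k=r_ke_{i,j}$ ($k\neq i,j$) and $e_{i,j}q_k=q_ke_{i,j}$ of Remark \ref{dualPresentationRISn}; and (\ref{ups06}), (\ref{ups07}) are exactly the generalized relations (\ref{NR2}) and (\ref{NR1}) established before Proposition \ref{normalOmega}. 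Each of these is a one-line rewriting.

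The substantive block is (\ref{ups09})--(\ref{ups14}), governing $x_{i,j}$, and here the key is the factorization $x_{i,j}=q_iq_js_{i,j}$ noted in (\ref{remVerTies}). Using $s_{i,j}^2=1$ together with (\ref{ris3}) (so that $s_{i,j}$ conjugates $q_i\leftrightarrow q_j$) gives $x_{i,j}^2=q_iq_js_{i,j}q_iq_js_{i,j}=q_iq_j$, which is the first half of (\ref{ups09}); the braid-type equalities $x_{i,j}x_{j,k}=x_{j,k}x_{i,k}=x_{i,k}x_{i,j}$ and the nested/disjoint commutation (\ref{ups10}) then follow from the corresponding relations (\ref{genSnRels}) for the $s_{i,j}$ after absorbing the $q$-factors via (\ref{ris5}) and (\ref{ris1}). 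The action relations (\ref{ups11}), (\ref{ups12}), (\ref{ups13}) reduce to (\ref{gr23}), to the absorbing identities $x_{i,j}q_i=x_{i,j}=x_{i,j}q_j$ of (\ref{remVerTies}), and to (\ref{genActSnPn}) respectively, again after commuting the $q$-factors through $s_{i,j}$. Finally (\ref{ups14}), that is $x_{i,j}e_{i,j}=e_{i,j}q_iq_j$, I would obtain from the generalized relation (\ref{NR3}) (equivalently from (\ref{ris7})) together with (\ref{NR1}).

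The main obstacle I anticipate is precisely (\ref{ups14}) and the braid relation inside (\ref{ups09}): both require tracking carefully how the permutation part $s_{i,j}$ interacts with the idempotent block $e_{i,j}$ and with the vertical ties $q_i$, and it is exactly here that the generalized relations (\ref{NR1})--(\ref{NR3}) do the real work, whereas all remaining checks are mechanical. I would present the computations as short equational chains with each step annotated by the relation invoked, in the style of Lemma \ref{remNormExceptions}, so that the well-definedness of $\eta$ becomes a routine, relation-by-relation confirmation and the proof closes by citing Proposition \ref{gensMonRIn} for surjectivity.
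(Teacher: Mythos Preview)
Your proposal is correct and follows exactly the same approach as the paper: verify that the images satisfy the defining relations of $\Upsilon_n$ (so the map is well defined by the universal property of the presentation) and then cite Proposition~\ref{gensMonRIn} for surjectivity. The paper compresses all of this into a single sentence, whereas you have spelled out which relation in $\R(\IS_n)$ witnesses each defining relation of $\Upsilon_n$; but there is no difference in strategy.
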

\begin{proof}
The proof follows from the fact that the mapping respects the defining relations of $\Omega_n$ and Proposition \ref{gensMonRIn}.
\end{proof}

\begin{lemma}\label{moreRelsRIn}
The relations $\rho_i\chi_{i,j}\rho_i=\rho_i\rho_j$ and $\rho_i\e_{i,j}\rho_i=\chi_{i,j}\pi_i\rho_j$ hold in $\Upsilon_n$.
\end{lemma}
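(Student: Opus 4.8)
The plan is to prove the two relations in the opposite order from which they are listed, because the first one will follow cleanly from the second together with the braiding relation (\ref{ups11}) and the cancellation relation (\ref{ups06}). Everything is carried out inside $\Upsilon_n$ using only the defining relations; the one genuinely new move is to \emph{introduce} the generator $\e_{i,j}$ through (\ref{ups14}) and then \emph{discard} it through (\ref{ups06}).

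First I would establish $\rho_i\e_{i,j}\rho_j=\chi_{i,j}\pi_i\rho_j$. Starting from (\ref{ups14}), that is $\chi_{i,j}\e_{i,j}=\e_{i,j}\pi_i\pi_j$, I right-multiply by $\rho_j$ and simplify the right-hand side using $\pi_j\rho_j=\rho_j$ from (\ref{qis2}) and the commutation $\e_{i,j}\pi_i=\pi_i\e_{i,j}$ from (\ref{ups08}); this yields the auxiliary identity $\chi_{i,j}\e_{i,j}\rho_j=\pi_i\e_{i,j}\rho_j$. Next I left-multiply this identity by $\rho_i$. On the right-hand side $\rho_i\pi_i=\rho_i$ (again (\ref{qis2})), leaving $\rho_i\e_{i,j}\rho_j$. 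On the left-hand side I push $\rho_i$ through $\chi_{i,j}$ with (\ref{ups11}) (for $s=(i\,j)$ one has $\rho_i\chi_{i,j}=\chi_{i,j}\rho_j$), then collapse the resulting middle block $\rho_j\e_{i,j}\rho_j$ to $\rho_j$ by (\ref{ups06}), leaving $\chi_{i,j}\rho_j$. Comparing the two sides gives $\chi_{i,j}\rho_j=\rho_i\e_{i,j}\rho_j$, and since $\chi_{i,j}\pi_i=\chi_{i,j}$ by (\ref{ups12}) I rewrite $\chi_{i,j}\rho_j=\chi_{i,j}\pi_i\rho_j$, which is exactly the relation.

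For the first relation $\rho_i\chi_{i,j}\rho_i=\rho_i\rho_j$, I would begin by moving $\rho_i$ past $\chi_{i,j}$ with (\ref{ups11}) (now $\chi_{i,j}\rho_i=\rho_j\chi_{i,j}$), which reduces the claim to $\rho_i\rho_j\chi_{i,j}=\rho_i\rho_j$. Two applications of (\ref{ups11}) together with commutativity of the $\rho$'s show that $\chi_{i,j}$ commutes with $\rho_i\rho_j$, so it suffices to prove $\chi_{i,j}\rho_i\rho_j=\rho_i\rho_j$. Here I write $\chi_{i,j}\rho_i\rho_j=\rho_j\chi_{i,j}\rho_j$ by (\ref{ups11}), substitute $\chi_{i,j}\rho_j=\rho_i\e_{i,j}\rho_j$ from the relation just proved, and finish with $\rho_j\e_{i,j}\rho_j=\rho_j$ from (\ref{ups06}), arriving at $\rho_i\rho_j$.

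I expect the main obstacle to be precisely the step $\rho_i\rho_j\chi_{i,j}=\rho_i\rho_j$. The relations tying $\chi_{i,j}$ to the $\rho$'s and $\pi$'s alone, namely (\ref{ups09}), (\ref{ups11}) and (\ref{ups12}), are insufficient: they only give that $\rho_i\rho_j\chi_{i,j}$ is a square root of the idempotent $\rho_i\rho_j$ (one computes $(\rho_i\rho_j\chi_{i,j})^2=\rho_i\rho_j\chi_{i,j}^2=\rho_i\rho_j\pi_i\pi_j=\rho_i\rho_j$), but not that it equals $\rho_i\rho_j$. Breaking this symmetry forces one to route through $\e_{i,j}$, i.e. to use the $\e$-relation as an essential intermediate; recognizing that the two relations must be proved in this dependency order, with the $\e_{i,j}$-identity feeding the $\chi_{i,j}$-identity, is the key structural point of the argument.
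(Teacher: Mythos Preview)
Your proof is correct and uses the same key ingredients as the paper: introducing $\e_{i,j}$ via (\ref{ups14}) and then eliminating it via (\ref{ups06}). The organization differs slightly: the paper proves the two identities independently (for the first it writes $\rho_i\chi_{i,j}\rho_i=\rho_i\chi_{i,j}\rho_i\e_{i,j}\rho_i$ by (\ref{ups06}), then applies (\ref{ups11}) and (\ref{ups14}) directly), whereas you prove the $\e$-identity first and feed it into the $\chi$-identity. Your intermediate equality $\chi_{i,j}\rho_j=\rho_i\e_{i,j}\rho_j$ is in fact the cleanest form of the second relation, and deriving the first from it is arguably tidier than the paper's parallel treatment; conversely, the paper's version has the virtue that neither relation depends on the other. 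Your closing remark that (\ref{ups09}), (\ref{ups11}), (\ref{ups12}) alone cannot collapse $\rho_i\rho_j\chi_{i,j}$ to $\rho_i\rho_j$ is a nice structural observation that the paper does not make explicit.
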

\begin{proof}
Due to (\ref{ups06}), we have $\rho_i\chi_{i,j}\rho_i=\rho_i\chi_{i,j}\rho_i\e_{i,j}\rho_i$, hence\[\rho_i\chi_{i,j}\rho_i\stackrel{(\ref{ups11})}{=}\rho_i\rho_j\chi_{i,j}\e_{i,j}\rho_i\stackrel{(\ref{ups14})}{=}\rho_i\rho_j\e_{i,j}\pi_i\pi_j\rho_i\stackrel{(\ref{rh1}),(\ref{qis2}),(\ref{ups06})}{=}\rho_j\rho_i\pi_i\pi_j\stackrel{(\ref{rh1}),(\ref{qis2})}{=}\rho_i\rho_j.\]Similarly, due to (\ref{qis2}) and (\ref{ups06}), we have $\chi_{i,j}\pi_i\rho_j=\chi_{i,j}\rho_j\pi_i=\chi_{i,j}\rho_j\e_{i,j}\rho_j\pi_i$, hence\[\chi_{i,j}\pi_i\rho_j\stackrel{(\ref{ups11})}{=}\rho_i\chi_{i,j}\e_{i,j}\rho_j\pi_i\stackrel{(\ref{ups14})}{=}\rho_i\e_{i,j}\pi_i\pi_j\rho_j\pi_i\stackrel{(\ref{qs1}),(\ref{qis2})}{=}\rho_i\e_{i,j}\pi_i\rho_j\stackrel{(\ref{ups08}),(\ref{qis2})}{=}\rho_i\e_{i,j}\rho_j.\]
\end{proof}

\begin{proposition}\label{normalUpsilon}
Every element $g\in\Upsilon_n$ can be represented by a word that coincides with the normal form of $\eta(g)$ when replacing the letters $\rho_i,\pi_i,\e_i,\chi_{i,j}$ by $r_i,q_i,e_i,x_{i,j}$ respectively.
\end{proposition}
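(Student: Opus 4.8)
The plan is to follow closely the argument of Proposition \ref{normalOmega}, using the defining relations of $\Upsilon_n$ together with Lemma \ref{moreRelsRIn} to rewrite an arbitrary word into the normal form $eqx_ge'$ of Corollary \ref{NFormRInO}. I would start with a word $w$ representing $g$ and first dispose of the degenerate case in which no $\chi_{i,j}$ occurs: then $\eta(g)$ is crossingless, so $x_g=1$, and it suffices to sort the letters $\rho_i,\pi_i,\e_{i,j}$ into the shape $eqe'$ using the commutations (\ref{rh1}), (\ref{qs1}), (\ref{qis2}), (\ref{ups05}), (\ref{ups08}) and the idempotency and absorption relations (\ref{ups04}), (\ref{ups06}), (\ref{ups07}); this is the analogue of the base case of Proposition \ref{normalOmega}. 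Recall that Lemma \ref{fixLinePerm} (via Proposition \ref{gensMonRIn}) guarantees that the crossings appearing in the target normal form may always be carried by the letters $\chi_{i,j}$ sitting on genuine point--strands, which is what makes the $\chi$'s the correct substitute for the permutation part $g_p^*$ appearing in the $\R(\IS_n)$ case.

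For the general case I would separate the word into a top block of idempotents $e$, a middle block $m$ in the letters $\rho_i,\pi_i,\chi_{i,j}$, and a bottom block of idempotents $e'$, i.e.\ $w\equiv e\,m\,e'$. The conjugation relations (\ref{ups11}) and (\ref{ups13}) move each $\chi_{i,j}$ past $\rho_k$ and $\e_{p,q}$ at the cost of the transposition $(i\,j)$, and (\ref{ups08}), (\ref{qis2}) commute $\pi$'s and $\rho$'s freely; the only obstructions to sending an idempotent $\e_{i,j}$ to an end are the non--commuting configurations in which it is trapped between $\rho_i$ and $\rho_j$. These are resolved precisely by (\ref{ups06}), (\ref{ups07}) and the two relations of Lemma \ref{moreRelsRIn}, which either delete the trapped $\e_{i,j}$ outright, trade a trapped $\rho_i$ for a tie $\pi_i$, or convert a $\rho$--$\e_{i,j}$--$\rho$ pattern with distinct lower index into a crossing $\chi_{i,j}$ (the analogue of relation (\ref{qis4})). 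Once no idempotent remains in the middle, (\ref{ups11}) and (\ref{ups12}) push all the $\chi$'s to the right of all the $\rho,\pi$'s, giving $m\equiv qX$ with $q$ a word in $\rho_i,\pi_i$ and $X$ a word in the $\chi_{i,j}$; thus $w\equiv e\,q\,X\,e'$.

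It remains to reduce $eqXe'$ to the genuine normal form, and this is where I expect the real obstacle to lie, exactly as in the case analysis of Proposition \ref{normalOmega}. The word $eqXe'$ may still fail to be normal because the crossings recorded by $X$ are not minimal, or because the ties encoded by the $\pi_i$'s are redundant or not pushed as far left as possible; these are the analogues, now phrased through $\chi_{i,j}$, of the seven exceptional configurations of Remark \ref{lemNormRISn}. Each is removed by a single rewriting move: the relation $\rho_i\chi_{i,j}\rho_i=\rho_i\rho_j$ of Lemma \ref{moreRelsRIn} eliminates a crossing whose endpoints carry honest lines (the analogue of (\ref{su1})), while (\ref{ups12}) and (\ref{ups14}) together with (\ref{qis2}) manage the interaction of the ties with the idempotents and let one resort $q$ into the canonical form $\hat\rho_1\cdots\hat\rho_k$ with $\hat\rho_i\in\{\rho_i,\pi_i\}$. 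The delicate point is to check that iterating these moves terminates at the \emph{unique} normal form rather than cycling; as in Proposition \ref{normalOmega}, I would control this by the same minimality invariants that characterize the normal form, namely the number of crossings of $X$ together with the number and leftmost position of the ties, each of which strictly decreases (or stabilizes in the intended direction) under the reductions above. Since every representative word thus reduces to the word underlying the normal form of $\eta(g)$, the proposition follows.
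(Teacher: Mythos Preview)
Your direct rewriting strategy is sound in spirit, but it is not the route the paper takes, and it misses the main economy the paper exploits. The paper does \emph{not} redo the case analysis of Proposition~\ref{normalOmega} inside $\Upsilon_n$. Instead, it observes (via Remark~\ref{dualPresentationRISn} and Lemma~\ref{moreRelsRIn}) that every defining relation of $\mu(\Omega_n^+)$, written in the generators $r_i,q_i,e_{i,j},s_{i,j}$, becomes a valid relation of $\Upsilon_n$ under $r_i\mapsto\rho_i$, $q_i\mapsto\pi_i$, $e_{i,j}\mapsto\e_{i,j}$, $s_{i,j}\mapsto\chi_{i,j}$. Hence $\Upsilon_n$ is a quotient of $\mu(\Omega_n^+)$, and by Remark~\ref{remAboutNFTorsion} the normal-form reduction already established for $\Omega_n^+$ descends verbatim: any $g\in\Upsilon_n$ is congruent to a word $eqxe'$ whose image under $\eta$ is the $\R(\IS_n)$-normal form. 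The only residual discrepancy with the $\R(\I_n)$-normal form of Corollary~\ref{NFormRInO} is that some $\pi_i$'s in $q$ may be redundant (absorbed by $x$), and these are stripped using~(\ref{ups12}).

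What this buys over your approach is precisely the step you flag as ``delicate'': reducing the $\chi$-word $X$ to the specific minimal word $x_g$. In your outline this requires solving, inside $\Upsilon_n$, the word problem for the submonoid generated by the $\chi_{i,j}$'s subject to~(\ref{ups09})--(\ref{ups10}), where $\chi_{i,j}^2=\pi_i\pi_j$ rather than $1$; you do not explain how the braid-type relations alone force $X$ down to the particular representative coming from $g_p^*$ via Lemma~\ref{fixLinePerm}. The paper sidesteps this entirely: because the reduction is performed upstairs in $\mu(\Omega_n^+)$, the permutation part is handled by the already-understood $S_n$-combinatorics, and only then pushed down through the quotient. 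Your plan would work if you supplied that missing reduction, but at that point you would essentially be reproving Proposition~\ref{normalOmega}; the paper's argument avoids the duplication.
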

\begin{proof}
Remark \ref{dualPresentationRISn} and Lemma \ref{moreRelsRIn} imply that the map $\mu(\Omega_n^+)\to\Upsilon_n$ sending $r_i\mapsto\rho_i$, $q_i\mapsto\pi_i$, $e_{i,j}\mapsto\e_{i,j}$ and $s_{i,j}\mapsto\chi_{i,j}$ is an epimorphism because each relation of $\mu(\Omega_n^+)$ holds in $\Upsilon_n$ under it. Thus, $\Upsilon_n$ is a quotient of $\mu(\Omega_n^+)$. So, by Remark \ref{remAboutNFTorsion}, $g=eqxe'$ such that $\mu(e)\mu(u)\mu(x)\mu(e')$ is the normal form of $\mu(g)$ in $\R(\IS_n)$. So, by definition of normal forms for elements of $\R(\I_n)$ in Corollary \ref{NFormRInO}, we have $\eta(x)=x_p$, and the unique possibility that $eqxe'$ is not a normal form when replacing the letters is that, by using (\ref{ups12}), some generators $\pi_i$ can be removed from $q$.
\end{proof}

\begin{theorem}\label{presRIn}
The monoid $\R(\I_n)$ is presented by generators $r_1,\ldots,r_n$ satisfying (\ref{r1}), $q_1,\ldots,q_n$ satisfying (\ref{ris1}) and (\ref{ris5}), $e_{i,j}$ with $i,j\in[n]$ and $i<j$ satisfying (\ref{ee1})--(\ref{ee3}), and $x_{i,j}$ with $i,j\in[n]$ and $i<j$, subject to the following relations:\begin{align}
e_{i,j}r_k&=r_ke_{i,j},\quad k\not\in\{i,j\},\\
r_ie_{i,j}r_i&=r_i,\quad r_je_{i,j}r_j=r_j,\\
e_{i,j}r_ie_{i,j}&=e_{i,j}q_i,\quad e_{i,j}r_je_{i,j}=e_{i,j}q_j\\
e_{i,j}q_k&=q_ke_{i,j},\\
x_{i,j}^2&=q_iq_j,\quad x_{i,j}x_{j,k}=x_{j,k}x_{i,k}=x_{i,k}x_{i,j},\\
x_{i,j}x_{a,b}&=x_{a,b}x_{i,j},\quad[i,j],[a,b]\text{ disjoint or nested},\\
x_{i,j}r_k&=r_{s(k)}x_{i,j},\quad s:=(i\,\,j),\\
x_{i,j}q_k&=q_kx_{i,j},\quad x_{i,j}q_i=x_{i,j}=x_{i,j}q_j,\\
x_{i,j}e_{p,q}&=e_{s(p),s(q)}x_{i,j},\quad s:=(i\,\,j),\\
x_{i,j}e_{i,j}&=e_{i,j}q_iq_j.
\end{align}
\end{theorem}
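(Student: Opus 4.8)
The plan is to reproduce, essentially verbatim, the argument used for Theorem \ref{PresRISn}. The first step is to recognize that $\Upsilon_n$ is \emph{by definition} the monoid abstractly presented by the generators and relations of the present statement: after the renaming $\rho_i\mapsto r_i$, $\pi_i\mapsto q_i$, $\e_{i,j}\mapsto e_{i,j}$, $\chi_{i,j}\mapsto x_{i,j}$, the defining relations (\ref{rh1}), (\ref{qs1}), (\ref{qis2}) and (\ref{ups04})--(\ref{ups14}) of $\Upsilon_n$ match the displayed relations one for one. Thus proving the theorem amounts to showing that the epimorphism $\eta\colon\Upsilon_n\to\R(\I_n)$ is an isomorphism, and since $\eta$ is already known to be surjective, the only thing left is injectivity.

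To establish injectivity I would invoke Proposition \ref{normalUpsilon}: every $g\in\Upsilon_n$ admits a word representative which, under the substitution $\rho_i,\pi_i,\e_{i,j},\chi_{i,j}\mapsto r_i,q_i,e_{i,j},x_{i,j}$, becomes the normal form of $\eta(g)$ in $\R(\I_n)$. So if $g,g'\in\Upsilon_n$ satisfy $\eta(g)=\eta(g')$, then the two associated words are the normal forms of one and the same element of $\R(\I_n)$, which is unique by Corollary \ref{NFormRInO}; hence $g$ and $g'$ are represented by literally the same word in the generators, forcing $g=g'$. This gives injectivity, so $\eta$ is an isomorphism and the stated presentation is valid.

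The substantive work has already been offloaded into Proposition \ref{normalUpsilon}, which in turn bootstraps off the $\R(\IS_n)$ case via the quotient map $\mu(\Omega_n^+)\to\Upsilon_n$; granting that proposition, the deduction above is a one-liner exactly parallel to Theorem \ref{PresRISn}. The only point I would take care to verify before asserting that the two relation sets \emph{coincide} is a line-by-line cross-check of (\ref{ups04})--(\ref{ups14}) against the displayed relations. In particular I want to confirm that (\ref{ups07}) together with the commutation (\ref{ups08}) reproduces the symmetric form $e_{i,j}r_ie_{i,j}=e_{i,j}q_i$ written in the statement, and that the absorption clause $q_ir_i=r_i$ buried in (\ref{qis2})/(\ref{ris5}) is genuinely present on both sides so that no defining relation is silently dropped or added. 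Once this bookkeeping is checked, no further argument is required.
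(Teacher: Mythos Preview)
Your proposal is correct and follows exactly the paper's own proof: the paper likewise deduces the theorem directly from Proposition \ref{normalUpsilon}, noting that two elements of $\Upsilon_n$ with the same image under $\eta$ must be represented by the same normal-form word, so $\eta$ is injective and hence an isomorphism. Your additional bookkeeping remark about matching (\ref{ups07}) with the form $e_{i,j}q_i$ via the commutation (\ref{ups08}) is a sensible sanity check but not something the paper spells out.
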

\begin{proof}
Proposition \ref{normalUpsilon} implies that two elements of $\Upsilon_n$ that are sent by $\eta$ to the same element of $\R(\I_n)$, are represented by the same word. Thus, $\eta$ is an isomorphism.
\end{proof}

\begin{remark}\label{remIe=1}
Note that $\I_n^{\bullet}$ is the submonoid of $\CC_n^{\bullet}$ generated by $r_1,\ldots,r_{n-1}$. Clearly, $\I_n^{\bullet}$ is isomorphic to $\I_{n-1}$, however their ramified monoids do not coincide. By proceeding as in Proposition \ref{gensMonRIn}, it is possible to prove that $\R(\I_n^{\bullet})$ is generated by the same generators as $\R(\I_{n-1})$ together with $e_{i,n}$ for all $i\in[n-1]$. Moreover, elements of $\R(\I_n^{\bullet})$ have normal forms as in Corollary \ref{NFormRInO}, in which the generator $e_{i,n}$ may occur in $e$ or $e'$.
\end{remark}

\subsubsection{The monoids $\PR(\I_n)$ and $\PR(\I_n^{\bullet})$}\label{5.2.2}

For $m\in\N$ and $J\in P_m$, we denote by $\T(J)$ the collection of set partitions of $[m+1]$ obtained either by adding $\{m+1\}$ to $J$ or by adding $m+1$ to some block of $J$. Note that if $J$ is planar, the elements of $\T(J)$ are not necessarily planar. See Figure \ref{examT(J)}.
\begin{figure}[H]
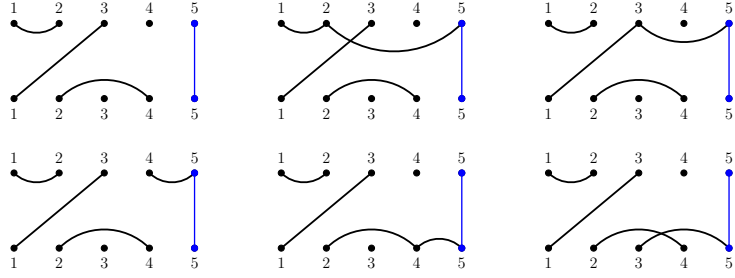

\figureforty
\caption{Elements of $\T(J)$ for $J=(\{1,2\},\{3,8\},\{4\},\{5,7\},\{6\})$ in $P_8$, containing $4$ planar set partitions of $\CC_5^{\bullet}$.}
\label{examT(J)}
\end{figure}

Observe now that if $\T(J)$ has $h$ planar elements, then $J$ has exactly $h-1$ blocks that can be connected to the point ${m+1}$ to get a planar element. Such number of connectable blocks of $J$ is denoted by $c(J)$.

For each $k\in[m]$, we denote by $\T_{m,k}$ the collection of ramified partitions $(I,J)$ such that $I$ is an $r$--partition of $[m]$ and $c(J)=k$. Set $T(m,k)=|\T_{m,k}|$. So,\begin{equation}|\PR(\I_n)|=\sum_{k=1}^{2n}T(2n,k)\quad\text{and}\quad|\PR(\I_n^\bullet)|=\sum_{k=1}^{2n-1}T(2n-1,k).\end{equation}

\begin{proposition}\label{numT(n,m)}
$T(1,1)=1$, and for integers $m>1$ and $k\in[m]$, we have\[T(m,k)=\sum_{j=1}^{m-1}\tau_{j,k}T(m-1,j),\]where\[\tau_{j,k}=\left\{\begin{array}{ll}1&\text{if }k>1\text{ and }j\in[k-1,m-1],\\0&\text{if }k>1\text{ and }j\not\in[k-1,m-1],\\1+e&\text{if }k=1,\text{ with }e=(m-1)\kern-0.7em\mod2\text{ and }j\in[m-1].\end{array}\right.\]

\end{proposition}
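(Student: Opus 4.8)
The plan is to prove the recursion by induction on $m$, realizing each element of $\T_{m,k}$ as the result of adjoining one new strand (the innermost point) to an element of $\T_{m-1,j}$, and then counting, for each source value $j=c(J')$, how many such extensions land in $\T_{m,k}$; this count is exactly what must equal $\tau_{j,k}$. The base case $T(1,1)=1$ is immediate, since on $[1]$ the only $r$-partition is $(\{1\},\{1\})$ and $c(\{1\})=1$, both elements of $\T(\{1\})$ being planar. The inverse of the adjoining operation is deletion of the innermost strand, and a first thing to check is that this is a genuine bijection onto the set of admissible extensions: deleting the innermost point sends a planar $J$ to a planar $J'$ (restriction preserves noncrossingness) and an $r$-partition of $[m]$ to an $r$-partition of $[m-1]$, since the nested symmetric arcs merely reindex.

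I would first isolate the purely planar ingredient as a lemma on how $c$ changes under adjoining one point. Given a planar $J'\in P_{m-1}$ with $c(J')=j$, the $j$ connectable blocks form a nested chain; adjoining the new point either as a new singleton or into one of these blocks always yields a planar $J$, and inspecting the chain of blocks still visible to the \emph{next} point shows that the singleton choice gives $c(J)=j+1$, joining the innermost connectable block gives $c(J)=1$, and joining successively more outward connectable blocks gives successively larger values up to $c(J)=j$. Thus each target $k\in[1,j+1]$ is attained by exactly one planar extension of $J'$, and none attains $k>j+1$. As $j=c(J')\le m-1$ automatically, this already forces a single generic contribution for every $k$ with $k\le j+1$, i.e. for $j\in[k-1,m-1]$.

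Next I would bring in the $r$-partition component $I$ and the constraint $I\preceq J$. The new point is always admissible as a singleton block of $I$, and such an $I$ is refined by any of the $j+1$ planar extensions above; this reproduces the generic multiplicities and keeps the nested symmetric arc shape of an $r$-partition. The parity correction comes from the single remaining possibility. When $m$ is even the adjoined strand is a lower point whose partner (the previously innermost point) is already present, so $I$ may instead carry the new innermost vertical line pairing the two; because $I\preceq J$, this forces the new point into the innermost connectable block, i.e. into the unique extension with $c(J)=1$, producing one additional element with $k=1$ and none with $k>1$. When $m$ is odd the adjoined strand is a new central point with no available partner, so this extra option does not occur. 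Writing $e=(m-1)\bmod 2$, this is precisely $\tau_{j,1}=1+e$ together with $\tau_{j,k}=1$ iff $j\in[k-1,m-1]$ for $k>1$; summing over the source values $j$ and using $T(m,k)=\sum_j\tau_{j,k}T(m-1,j)$ gives the stated formula.

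I expect the main obstacle to be the parity dichotomy, namely proving rigorously that the vertical-line option for $I$ is available exactly when $m$ is even and that it contributes one and only one extra pair, necessarily landing in $k=1$. This is where the two structures interact most tightly: one must show that pairing the new strand with its partner forces, through $I\preceq J$, the join of the innermost connectable block of $J$, and that the resulting pair is distinct from (yet shares its $J$ with) the generic $k=1$ pair in which the new point is an $I$-singleton. The supporting technical point is the verification of the $c$-transition via the visible-block chain; the small cases $m=1,2,3$ already exhibit the whole mechanism and serve as the guiding check that the bookkeeping, and in particular the doubling at $k=1$ for even $m$, is correct.
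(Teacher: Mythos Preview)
Your proposal follows essentially the same approach as the paper's proof: both argue by building up one point at a time, analysing how the number $c(J)$ of connectable blocks changes under the $j+1$ planar extensions of $J$, and then invoking the parity dichotomy for the $r$-partition component $I$ to account for the extra contribution at $k=1$ when $m$ is even. Your treatment is in fact more detailed than the paper's, which simply asserts that from $c(J)=j$ one obtains exactly one $J'$ with $c(J')=k$ for each $k\in[j+1]$ and that the line option has $c(J')=1$, without further justification.

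There is, however, one inaccuracy in your supporting description that you should fix. The $j$ connectable blocks of $J$ do \emph{not} in general form a nested chain. With the new point inserted at the innermost right position, the connectable blocks consist of the unique block $B_0$ whose gap contains that position together with the \emph{sibling} blocks $C_1,\ldots,C_{j-1}$ lying at the top level of that gap; the $C_i$ are pairwise disjoint, not nested. What \emph{is} true (and suffices) is that these blocks carry a natural linear order determined by the position of the next insertion point $p'$, and that joining the new point to the $k$-th block in this order yields $c(J')=k$, with the singleton option giving $c(J')=j+1$. Your conclusions about the $c$-transition and about the line case forcing $c(J')=1$ are correct; only the phrase ``nested chain'' and the associated ``innermost/outward'' language need to be replaced by this sibling-plus-parent picture.
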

\begin{proof}
If $m=2n-1$, for every element $(I,J)$ of $\T_{m-1,j}$ we obtain exactly one element $(I',J')$ of $\T_{m,k}$ for all $k\in[j+1]$, see Figure \ref{figNumT(n,m)A}. Indeed, observe that $I'$ is the same as $I$ with an extra point, $\{n\}$, at right. Since $c(J)=j$, this point can be connected by a tie, keeping planarity, to $j$ blocks of $J$, obtaining $c(J')=k$ for all $k\in [j]$; however, it can be also non connected, so that $c(J')=j+1$. Similarly, if $m=2n$ and $I'$ contains $\{n\}$ and $\{n+1\}$, i.e., it has two points at right, for every element $(I,J)$ of $\T_{m-1,j}$ we obtain exactly one element $(I',J')$ of $\T_{m,k}$ for all $k\in[j+1]$, see Figure \ref{figNumT(n,m)B}. But another element must be counted, corresponding to the case in which $I'$ contains $\{n,n+1\}$, i.e., it has a line at right. In this case  evidently $c(J')=1$, therefore $\tau_{j,1}=2$.
\end{proof}

\begin{figure}[H]
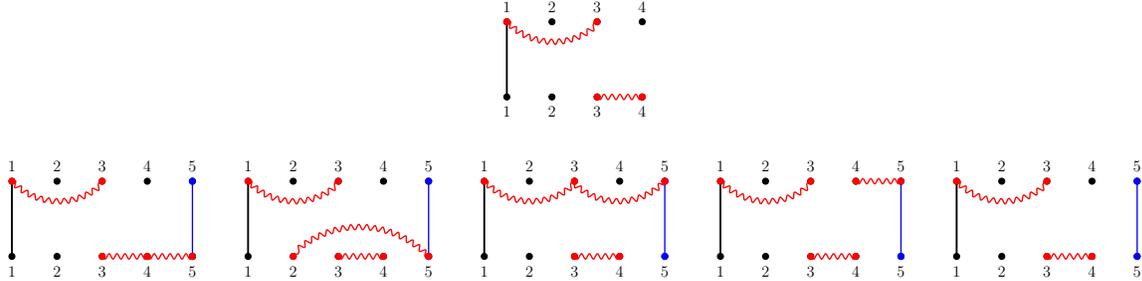

\figurethisev
\caption{Elements of $\T_{9,k}$ with $k\in[5]$ obtained from an element of $\T_{8,4}$.}
\label{figNumT(n,m)A}
\end{figure}
\begin{figure}[H]
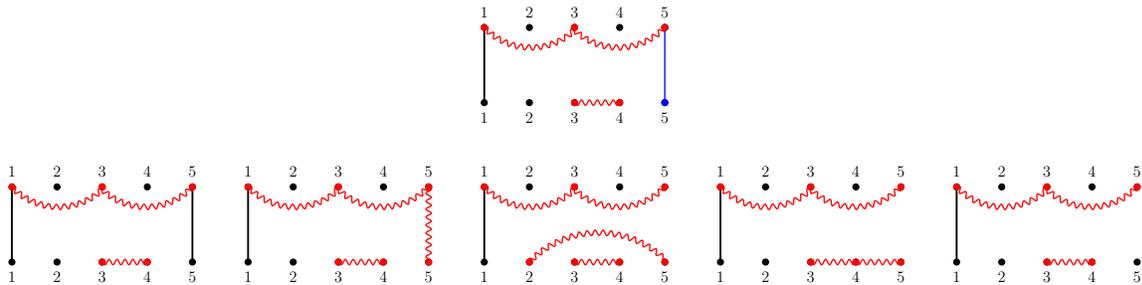

\figureforone
\caption{Elements of $\T_{10,k}$ with $k\in[4]$ obtained from an element of $\T_{9,3}$.}
\label{figNumT(n,m)B}
\end{figure}

\begin{table}[H]
\begin{tabular}{|c|ccccccc|}\hline
\backslashbox{$\kern-0.2em m\kern-0.5em$\\[-0.8mm]}{$\kern-0.5em k\kern-0.2em$\\[-1.5mm]}&$1$&$2$&$3$&$4$&$5$&$6$&$7$\\\hline
$1$&$1$&&&&&&\\
$2$&$2$&$1$&&&&&\\
$3$&$3$&$3$&$1$&&&&\\
$4$&$14$&$7$&$4$&$1$&&&\\
$5$&$26$&$26$&$12$&$5$&$1$&&\\
$6$&$140$&$70$&$44$&$18$&$6$&$1$&\\	
$7$&$279$&$279$&$139$&$69$&$25$&$7$&$1$\\
\hline\end{tabular}
\caption{First lines of the triangle $T(m,k)$.}
\end{table}

For each $i,j\in[n]$, define $\overline{x}_{i,j}\in\R(\I_n)$ as follows:
\[\overline{x}_{i,j}=\left\{\begin{array}{ll}
r_i\cdots r_{j-1}e_i\cdots e_{j-1}r_{i+1}\cdots r_j&\text{if }i<j,\\
q_i&\text{if }i=j,\\
r_{i+1}\cdots r_je_i\cdots e_{j-1}r_i\cdots r_{j-1}&\text{if }i>j,
\end{array}\right.\]and, if $i<j$, we define $\overline{e}_{i,j}=r_{i+1}\cdots r_{j-1}e_i\cdots e_{j-1}r_{i+1}\cdots r_{j-1}$ in $\R(\I_n)$. See Figure \ref{figGensXE}.
\begin{figure}[H]
\figurethinin
\caption{Elements $\overline{x}_{1,4}$, $\overline{x}_{4,1}$ and $\overline{e}_{1,4}$ respectively.}
\label{figGensXE}
\end{figure}

\begin{proposition}\label{GensPRIn}
The monoid $\PR(\I_n)$ is generated by $r_1,\ldots,r_n$, $q_1,\ldots,q_n$, $e_1,\ldots,e_{n-1}$.
\end{proposition}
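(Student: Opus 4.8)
The plan is to prove the two inclusions separately. The inclusion $\langle r_1,\ldots,r_n,q_1,\ldots,q_n,e_1,\ldots,e_{n-1}\rangle\subseteq\PR(\I_n)$ is immediate once one checks that each generator is a planar ramified partition whose first component is an $r$--partition: indeed $r_i=(r_i,r_i)$ and $q_i=(r_i,1)$ have $J\in\{r_i,1\}\subseteq\PP_n$, while $e_i=(1,J)$ has $J$ the adjacent tie $\{i,i+1,2n-i,2n-i+1\}$ together with vertical lines, which is planar, and all three first components lie in $\I_n$. Since $\PP_n$ is a submonoid of $\CC_n$ and $\I_n$ is closed under concatenation, $\PR(\I_n)$ is a submonoid of $\R(\I_n)$, so arbitrary products of these generators remain in $\PR(\I_n)$.

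For the reverse inclusion I would argue exactly as in the proof of Proposition \ref{gensMonRIn}, now inserting planarity at each step. Let $g=(I,J)\in\PR(\I_n)\subseteq\R(\I_n)$. Corollary \ref{NFormRInO} gives the normal form $g=eqx_ge'$, where $e,e'\in P_n$ collect the ties of $J$ lying, respectively, in $[n]$ and in $[n+1,2n]$, the factor $q$ is a word in the $r_i$'s and $q_i$'s, and $x_g$ is obtained from a representative $u$ of the completed permutation $g_p^*$ (chosen, by Lemma \ref{fixLinePerm}, with the transpositions $s_{i,j}$ joining only points of $I$) by replacing each $s_{i,j}$ with $x_{i,j}$. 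By their very definition (Figure \ref{figGensXE}) the elements $\overline{x}_{i,j}$ and $\overline{e}_{i,j}$ are words in $r_i,q_i,e_i$, so it suffices to show that, under the hypothesis $J\in\PP_n$, every $x_{i,j}$ occurring in $x_g$ may be replaced by $\overline{x}_{i,j}$ and every non-adjacent tie $e_{i,j}$ occurring in $e$ or $e'$ by $\overline{e}_{i,j}$ without changing $g$. Since $q\in\langle r_i,q_i\rangle$ already, this would exhibit $g$ as a word in $r_i,q_i,e_i$.

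The heart of the argument, and the step I expect to be the main obstacle, is the diagrammatic claim that planarity of $J$ forces the strands strictly between the two endpoints of any such connector to be broken into points, and that on such configurations the planar reroutings coincide with the crossing and long connectors. Concretely, a block of a diagram in $\PP_n$ that joins positions $i<j$ cannot pass over a through--line at an intermediate position, so $i+1,\ldots,j-1$ must be points of $I$; granting this, one verifies by concatenation that $\overline{e}_{i,j}$ reproduces exactly the tie $e_{i,j}$ and that $\overline{x}_{i,j}$ reproduces exactly the connection encoded by the transposition $s_{i,j}$ of $g_p^*$, the genuine crossing of $x_{i,j}$ being undone precisely because the intervening strands are absent. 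I would prove the first assertion by a noncrossing (Jordan-curve) argument on linear graphs and check the two identities directly, reading them off the pictures in Figure \ref{figGensXE}.

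Combining these reductions rewrites $g=eqx_ge'$ as a word every letter of which is one of $r_i,q_i,e_i,\overline{x}_{i,j},\overline{e}_{i,j}$; expanding the last two through their definitions exhibits $g$ in $\langle r_1,\ldots,r_n,q_1,\ldots,q_n,e_1,\ldots,e_{n-1}\rangle$, which proves the proposition. I would finally note that the same scheme, retaining the additional generators $e_{i,n}$ flagged in Remark \ref{remIe=1}, applies verbatim to $\PR(\I_n^{\bullet})$.
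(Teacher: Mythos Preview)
Your overall strategy coincides with the paper's: start from the $\R(\I_n)$ normal form $eqx_ge'$ of Corollary~\ref{NFormRInO} and use planarity of $J$ to trade the non-local letters $e_{i,j}$ and $x_{i,j}$ for the words $\overline{e}_{i,j},\overline{x}_{i,j}$ in $r_i,q_i,e_i$. The treatment of $e$ and $e'$ is essentially identical to the paper's, and your noncrossing argument that the intermediate top positions must be singletons of $J$ (hence points of $I$) is exactly what is needed there.

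The gap is in the $x_g$ step. You propose to replace \emph{each letter} $x_{i,j}$ in the transposition factorisation of $g_p^*$ by $\overline{x}_{i,j}$, but this one-for-one substitution does not preserve $g$. The point is that $g_p^*$ need not be planar: already for $n=3$, $I=r_1r_2r_3$ and $J=\{\{1,3,5\},\{2\},\{4\},\{6\}\}$ one finds $g_p^*=(1\,2\,3)$, so $x_g=x_{2,3}x_{1,2}$. A direct computation shows that
\[
\overline{e}_{1,3}\cdot r_2r_3\cdot \overline{x}_{2,3}\,\overline{x}_{1,2}
\]
has second component $\{\{1,3\},\{2\},\{4\},\{5\},\{6\}\}\ne J$: the extra $r$'s inside each $\overline{x}_{i,j}$ sever the unique top-to-bottom thread that carried the block $\{1,3,5\}$ through the overlapping transpositions. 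So ``the planar reroutings coincide with the crossing connectors'' fails at the level of individual letters of $x_g$.

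What the paper does instead is reorganise $y=q'x_g$ globally: for each block $B$ of $J$ that meets both $[n]$ and $[n+1,2n]$ and contains no line of $I$, it uses a \emph{single} $\overline{x}_{i,j}$ whose endpoints are the extremal top and bottom positions of $B$. In the example above this yields a single $\overline{x}$ joining top~$1$ to bottom position~$2$, and one checks that $\overline{e}_{1,3}\cdot r_2r_3\cdot(\text{that }\overline{x})$ indeed equals $g$. Your argument goes through once you make this change: replace the letter-by-letter substitution in $x_g$ by the block-by-block description of $y$.
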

\begin{proof}
Let $(I,J)=eqx_ge'$ in $\PR(\I_n)$ written in normal form as in Corollary \ref{NFormRInO}. Let $r$ be a product of $r_i$'s and let $q'$ be a product of $q_i$'s such that $q=rq'$, that is, $(I,J)=erye'$ with $y=q'x_g$. As $J$ is planar, $e$ can be written as a product of $\overline{e}_{i,j}$'s such that each $i$ and $j$ lie in a block of $J$ that contains no $k$ satisfying $i<k<j$. Similarly, $e'$ can be written as a product of $\overline{e}_{i,j}$'s such that $2n+1-i$ and $2n+1-j$ lie in a block of $J$ that contains no $k$ satisfying $2n+1-j<k<2n+1-i$. Note that $r_i$ occurs in $r$ whenever $\{i\}$ is a point of $I$ and $i$ is not the minimum of a block of $J$ containing a line but containing no lines of $I$. Finally, by planarity and definition of $x_g$, the element $y$ can be written as a product of $\overline{x}_{i,j}$'s, where, for every block $B$ of $J$ non containing lines of $I$, $i=\min(B\cap[n])$ and $2n+1-j=\max(B\cap[n+1,2n])$. This proves that neither $x_{i,j}$ nor $e_{i,j}$ is needed.
\end{proof}

\begin{corollary}[Normal form]\label{NFormPRIn}
The word $erye'$ obtained during the proof of Proposition \ref{GensPRIn} induces a normal form in the free monoid $(\{r_i,q_i\mid i\in[n]\}\cup\{e_i\mid i\in[n-1]\})^*$.
\end{corollary}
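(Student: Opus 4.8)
The plan is to read the corollary as the two halves of a normal-form statement and to supply the half that is not yet done. Existence is precisely Proposition \ref{GensPRIn}, which produces, for every $(I,J)\in\PR(\I_n)$, a word of the prescribed shape $erye'$ in the alphabet $\{r_i,q_i\mid i\in[n]\}\cup\{e_i\mid i\in[n-1]\}$. Hence the content of the corollary is \emph{uniqueness}: the assignment $(I,J)\mapsto erye'$ must be well defined and injective, so that evaluation restricts to a bijection from the words of this shape onto $\PR(\I_n)$. I would organize the argument around the already-established uniqueness for the larger monoid.

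First I would check that the construction is unambiguous. By Corollary \ref{NFormRInO}, the element $(I,J)$, viewed in $\R(\I_n)$, has a \emph{unique} normal form $eqx_ge'$; thus $e,e'\in P_n$, the factor $q$ (a product of $r_i$'s and $q_i$'s), and $x_g\in\langle x_{i,j}\mid i<j\rangle$ are each determined by $(I,J)$. The splitting $q=rq'$ into its $r$-letters and its $q$-letters is canonical, since by (\ref{r1}), (\ref{ris1}) and (\ref{ris5}) the $r_i,q_i$ are pairwise commuting idempotents with $q_ir_i=r_i$, so $r$ and $q'$ are fixed by the index sets occurring in $q$. It then remains to see that, for planar $J$, the factorizations $e=\prod\overline{e}_{i,j}$, $e'=\prod\overline{e}_{i,j}$ and $y=q'x_g=\prod\overline{x}_{i,j}$ of Proposition \ref{GensPRIn} are forced: because $J$ is noncrossing, each of its nontrivial blocks decomposes uniquely into nested arcs, each top (resp. bottom) arc is realised by exactly one $\overline{e}_{i,j}$, and each non vertical line of $x_g$ is realised by exactly one $\overline{x}_{i,j}$ with endpoints the extreme points of the relevant block. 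Planarity is exactly what makes this arc/nesting decomposition unique.

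Next I would prove injectivity by reconstructing $(I,J)$ from the word. Given a word of the form $erye'$, I read off its diagram: the $\overline{e}_{i,j}$ blocks draw nested ties among the points of the upper (resp. lower) row, the letters $r_i$ turn strands into points, the letters $q_i$ attach vertical ties, and the $\overline{x}_{i,j}$ blocks draw the non vertical lines. Since every block nests without crossing, the result is a well-defined planar ramified partition, and evaluating the word returns exactly it; so two words of this shape that evaluate equally must agree block-by-block, which combined with the canonicity above gives injectivity. Together with Proposition \ref{GensPRIn} this yields the asserted bijection between normal-form words and $\PR(\I_n)$.

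The hard part will be the parsing step. Each $\overline{e}_{i,j}$ and each $\overline{x}_{i,j}$ unfolds into a string of the basic letters $r_i,q_i,e_i$, so I must ensure that a raw word in $(\{r_i,q_i\}\cup\{e_i\})^*$ can be segmented into these blocks in only one way; a priori the defining relations could permit a different grouping collapsing to the same basic word. I expect to resolve this by anchoring the segmentation to the unique arc-nesting decomposition of the noncrossing partition $J$ together with the point/line data of $I$ furnished by the $\R(\I_n)$ normal form of Corollary \ref{NFormRInO}: planarity forbids precisely the crossing configurations that would allow an alternative block structure, so no ambiguity can arise.
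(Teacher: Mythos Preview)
Your reduction to the $\R(\I_n)$ normal form of Corollary~\ref{NFormRInO} and your use of planarity to determine the \emph{set} of factors $\overline{e}_{i,j}$ and $\overline{x}_{i,j}$ is correct and is implicit in the paper as well. However, your claim that the factorizations $e=\prod\overline{e}_{i,j}$, $y=\prod\overline{x}_{i,j}$, $e'=\prod\overline{e}_{i,j}$ are ``forced'' overlooks exactly the point the paper's proof is about: when two factors $\overline{e}_{i,j}$ and $\overline{e}_{h,k}$ correspond to disjoint arcs they commute, so the construction of Proposition~\ref{GensPRIn} does \emph{not} yet yield a unique word in the free monoid --- only a unique multiset of block-generators. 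The paper's proof is entirely devoted to this residual ambiguity: it observes which pairs $\overline{e}_{i,j},\overline{e}_{h,k}$ (resp.\ $\overline{x}_{i,j},\overline{x}_{h,k}$) commute, and for the non-commuting nested pairs it fixes the order (inner before outer in $e$, outer before inner in $e'$), then sorts. That single sorting step is the whole content of the corollary, and it is missing from your argument.

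Conversely, your ``hard part'' --- worrying whether two different orderings of block-generators could expand to the same string in the letters $r_i,q_i,e_i$ --- is not needed. A normal form is a section $(I,J)\mapsto w$ of the evaluation map; once the ordered product of $\overline{e}_{i,j}$'s and $\overline{x}_{i,j}$'s is fixed, expansion into the basic alphabet is literal substitution and hence deterministic, and evaluation returns $(I,J)$ by construction. Injectivity then follows automatically and does not require the diagrammatic reconstruction you sketch. So your proof is longer than necessary in one place and has a genuine gap in another: drop the segmentation discussion and instead specify a total order on the commuting block-generators (any refinement of the nesting order the paper uses will do).
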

\begin{proof}
By definition, $\overline{e}_{i,j}$ commutes with $\overline{e}_{h,k}$ if $[i,j]\cap[h,k]\subset\{i,j,h,k\}$ and contains at most one element. This implies that $\overline{e}_{i,j}$'s can be partially ordered, indeed, if $i<h<k<j$, then $\overline{e}_{h,k}$ precedes $\overline{e}_{i,j}$ in $e$, while $e_{h,k}$ follows $e_{i,j}$ in $e'$. On the other hand, $\overline{x}_{i,j}$ commutes with $\overline{x}_{h,k}$ only if $[\min(i,j),\max(i,j)]$ and $[\min(h,k),\max(h,k)]$ are disjoint. Thus, $\overline{x}_{i,j}$'s can be partially ordered as well, indeed, if $i<h\leq j<k$, then $\overline{x}_{i,j}$ precedes $\overline{x}_{h,k}$ and $\overline{x}_{j,i}$ precedes $\overline{x}_{k,h}$. We get a normal form by sorting the $\overline{e}_{i,j}$'s and $\overline{x}_{i,j}$'s occurring in $erye'$.
\end{proof}
\begin{figure}[H]
\figurethieig
\caption{Normal form of $(I,J)$ in $\R(\I_5)$, where $I=r_2r_3r_4r_5$ and $J=(\{1,3,10\},\{2\},\{4,9\},\{5,7\},\{6\},\{8\})$.}
\label{XtoRER}\end{figure}

Let $\Gamma_n$ be the monoid presented by generators $\rho_1,\ldots,\rho_n$, $\pi_1,\ldots,\pi_n$ and $\e_1,\ldots,\e_n$, subject to the relations (\ref{rh1}), (\ref{ep1}), (\ref{qs1}), (\ref{qs2}), (\ref{qis1}), (\ref{qis2}) and (\ref{qis3}).

\begin{lemma}\label{PiGammaLem}
Let $a,b\in\PP_n$, and let $a',b'\in\Gamma_n$ obtained from $a,b$ respectively, by replacing each $h_i$ with $\e_i$ and each $r_i$ with $\rho_i$. If $a=b$, then $\pi a'=\pi b'$, where $\pi$ is the product of all $\pi_i$ such that $r_i$ occurs in $a$ or in $b$.
\end{lemma}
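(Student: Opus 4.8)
The plan is to compare the substitution $r_i\mapsto\rho_i$, $h_i\mapsto\e_i$ relation by relation, to isolate the single defining relation of $\PP_n$ that fails to lift to $\Gamma_n$ on the nose, and to absorb the resulting discrepancy into the central idempotent $\pi$. First I would record that $\pi_1,\dots,\pi_n$ are central idempotents of $\Gamma_n$: by (\ref{qs1}) each $\pi_i$ is idempotent and commutes with the $\pi_j$, by (\ref{qs2}) it commutes with every $\e_j$, and by (\ref{qis2}) it commutes with every $\rho_j$. Consequently any product $\pi_S=\prod_{i\in S}\pi_i$ is a central idempotent with $\pi_S\pi_i=\pi_S$ whenever $i\in S$. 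Let $\phi$ denote the homomorphism from the free monoid on $\{r_i,h_i\}$ to $\Gamma_n$ sending $r_i\mapsto\rho_i$ and $h_i\mapsto\e_i$, so that $\phi(a)=a'$ and $\phi(b)=b'$ on the chosen word representatives.

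Next I would check that $\phi$ carries each defining relation (\ref{BnT1})--(\ref{BnT5}) of $\PP_n$ to a valid identity of $\Gamma_n$, \emph{with exactly one exception}. Relations (\ref{BnT1}) and (\ref{BnT2}) lift to (\ref{rh1}) and (\ref{ep1}); relation (\ref{BnT3}) lifts to the commuting case of (\ref{qis1}), since $|i-j|>1$ forces $j\neq i,i+1$; and relation (\ref{BnT5}) lifts to (\ref{qis3}). The exception is (\ref{BnT4}): here the other case of (\ref{qis1}) gives $\e_i\rho_i\e_i=\e_i\pi_i$ and $\e_i\rho_{i+1}\e_i=\e_i\pi_{i+1}$, so that $\phi(h_ir_ih_i)=\pi_i\phi(h_i)$ and $\phi(h_ir_{i+1}h_i)=\pi_{i+1}\phi(h_i)$, which differ from $\phi(h_i)=\e_i$ precisely by a central idempotent $\pi_i$ (resp.\ $\pi_{i+1}$). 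Thus, reading an application of (\ref{BnT4}) in the length-reducing direction, $\phi$ of the longer word equals $\pi_i$ (resp.\ $\pi_{i+1}$) times $\phi$ of the shorter word, while every other relation leaves $\phi$ unchanged.

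Writing $S$ for the set of indices $i$ with $r_i$ occurring in $a$ or $b$, so that $\pi=\pi_S$, the key consequence is that the quantity $\pi_S\,\phi(w)$ is invariant under any rewriting step whose relation is not (\ref{BnT4}), and also under a (\ref{BnT4})-step \emph{provided its index lies in} $S$, because then $\pi_S\pi_i=\pi_S$. I would therefore reduce the problem to exhibiting, for the words $a$ and $b$, rewriting sequences to a common word all of whose (\ref{BnT4})-steps have index in $S$. Here I would invoke the solution of the word problem for $\PP_n\cong\J_{2n}$ (the Temperley--Lieb/Jones normal form, cf.\ (\ref{JnRels})): every word can be reduced by length-non-increasing applications of the relations to a normal form depending only on the represented element, where (\ref{BnT1}), (\ref{BnT2}), (\ref{BnT4}), (\ref{BnT5}) are used as length-reducing rewrites and (\ref{BnT3}) together with the generator commutations is used for sorting. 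Since $a=b$, the representatives of $a$ and $b$ reduce to one and the same normal word $N$. A length-non-increasing rewrite never introduces a generator $r_i$ with a new index, for the reductions coming from (\ref{BnT4}) and (\ref{BnT5}) only delete occurrences of generators and the sorting moves preserve the multiset of indices; hence the reduction of $a$ applies (\ref{BnT4}) only at indices in $\mathrm{supp}(a)\subseteq S$, and likewise for $b$. Applying the invariance of $\pi_S\phi$ along both reductions yields $\pi a'=\pi_S\phi(N)=\pi b'$.

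The main obstacle is exactly this control of auxiliary generators. A priori a derivation witnessing $a=b$ could introduce and later cancel some $r_i$ with $i\notin S$ by using (\ref{BnT4}) in the expanding direction, and the attached idempotent $\pi_i$ could then not be absorbed into $\pi=\pi_S$. That this cannot be avoided by a soft argument is visible from the fact that $c\mapsto\pi_{[n]}\phi(c)$ \emph{is} a genuine homomorphism $\PP_n\to\Gamma_n$ (all five relations lift once $\pi_{[n]}\pi_i=\pi_{[n]}$ kills the (\ref{BnT4})-discrepancy), but it only delivers the weaker identity $\pi_{[n]}a'=\pi_{[n]}b'$ with the full idempotent $\pi_{[n]}$. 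Sharpening $\pi_{[n]}$ down to $\pi_S$ is precisely what forces the use of the length-non-increasing normal form, which confines the derivation to the indices already present in $a$ or $b$; verifying this support control is the technical heart of the proof.
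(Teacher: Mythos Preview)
Your argument is correct and follows the same relation-by-relation strategy as the paper: identify (\ref{BnT4}) as the only defining relation of $\PP_n$ that does not lift verbatim under $r_i\mapsto\rho_i$, $h_i\mapsto\e_i$, and absorb the resulting extra factor $\pi_j$ into the central idempotent $\pi$. The paper's proof records exactly this check and then concludes; you go one step further and make explicit the point the paper leaves tacit, namely that an arbitrary derivation of $a=b$ could in principle pass through a word containing some $r_j$ with $j\notin S$, whose attached $\pi_j$ would not be absorbed by $\pi=\pi_S$. Your fix---reduce both $a$ and $b$ to their common Jones/Temperley--Lieb normal form using only length-non-increasing rewrites, which never enlarge the set of $r$-indices---is correct and is precisely what turns the relation check into a proof. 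This is a careful elaboration of the paper's approach rather than a genuinely different route.
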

\begin{proof}
Observe that when replacing $h_i$ with $\e_i$ and $r_i$ with $\rho_i$, relations (\ref{BnT1}), (\ref{BnT2}), (\ref{BnT3}) and (\ref{BnT5}) become part of (\ref{rh1}), (\ref{ep1}), (\ref{qis1}) and (\ref{qis3}). This does not occur with (\ref{BnT4}), however (\ref{qs1}), (\ref{qs2}) and (\ref{qis1}) imply $\pi_i(\epsilon_j\rho_i\epsilon_j)=\pi_i(\pi_i\epsilon_j)=\pi_i(\epsilon_j)$ for each $j\in\{i,i+1\}$. Thus, if $a=b$ in $\PP_n$, then $\pi a'=\pi b'$ in $\Gamma_n$.
\end{proof}

\begin{theorem}\label{presPRIn}
The monoid $\PR(\I_n)$ is presented by generators $r_1,\ldots,r_n$, $q_1,\ldots,q_n$ and $e_1,\ldots,e_{n-1}$, satisfying (\ref{r1}), (\ref{TSn1}), (\ref{ris1}), (\ref{ris2}), (\ref{ris4}), (\ref{ris5}) and (\ref{ris6}).
\end{theorem}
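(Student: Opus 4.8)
The plan is to realize $\PR(\I_n)$ as a homomorphic image of the presented monoid and then force the map to be injective by matching normal forms, exactly as in the proofs of Theorems \ref{PresRISn} and \ref{presRIn}. The presented monoid in the statement is, under the relabeling $\rho_i\leftrightarrow r_i$, $\pi_i\leftrightarrow q_i$, $\e_i\leftrightarrow e_i$, precisely the monoid $\Gamma_n$ fixed before Lemma \ref{PiGammaLem}: its defining relations (\ref{rh1}), (\ref{ep1}), (\ref{qs1}), (\ref{qs2}), (\ref{qis1}), (\ref{qis2}), (\ref{qis3}) are (\ref{r1}), (\ref{TSn1}), (\ref{ris1}), (\ref{ris2}), (\ref{ris4}), (\ref{ris5}), (\ref{ris6}). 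First I would define $\theta\colon\Gamma_n\to\PR(\I_n)$ by $\rho_i\mapsto r_i$, $\pi_i\mapsto q_i$, $\e_i\mapsto e_i$. Each of these defining relations already appears among the relations of $\R(\IS_n)$ in Theorem \ref{PresRISn}, and since $\PR(\I_n)\subseteq\R(\I_n)\subseteq\R(\IS_n)$ is a submonoid containing $r_i,q_i,e_i$, the relations hold in $\PR(\I_n)$; hence $\theta$ is well defined, and it is surjective by Proposition \ref{GensPRIn}. It remains to prove $\theta$ is injective.

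For injectivity I would prove a normal-form statement for $\Gamma_n$, in the style of Propositions \ref{normalOmega} and \ref{normalUpsilon}: every $\gamma\in\Gamma_n$ is congruent to a word whose $\theta$-image is the normal form $erye'$ of Corollary \ref{NFormPRIn}. The reduction has two interacting parts. For the $\pi$'s, relations (\ref{qs1}), (\ref{qs2}) and (\ref{qis2}) let each $\pi_i$ commute past every other letter, and $\pi_i\rho_i=\rho_i$ makes $\pi_i$ disappear whenever an $\rho_i$ is present; so all the $\pi$'s collect into one block, surviving only at positions that carry no $\rho_i$ (matching how $q_i$ records a point, not a line). For the remaining word in the $\e$'s and $\rho$'s, one uses the coordinate projection $(I,J)\mapsto J$, a homomorphism onto $\PP_n$ under which $\e_i\mapsto h_i$ and $\rho_i\mapsto r_i$: this is exactly the situation of Lemma \ref{PiGammaLem}.

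The point of Lemma \ref{PiGammaLem} is that any planar identity $a=b$ in $\PP_n$ lifts to $\pi a'=\pi b'$ in $\Gamma_n$, where $\pi$ collects the $\pi_i$ for the $r_i$ occurring. Since $\PP_n\cong\J_{2n}$ admits a planar normal form built from the $\overline x_{i,j}$ and $\overline e_{i,j}$ of the preceding subsection, I would transport that normal form up to $\Gamma_n$ via the lemma, absorbing or recording the required $\pi$-factors against the already isolated $\pi$-block. Reassembling yields a word of the shape $erye'$ prescribed by Corollaries \ref{NFormRInO} and \ref{NFormPRIn}. Because that normal form is unique in the free monoid on $\{r_i,q_i,e_i\}$, two elements of $\Gamma_n$ with equal $\theta$-image reduce to the same word and so coincide in $\Gamma_n$; thus $\theta$ is an isomorphism.

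The main obstacle, I expect, is the $\pi$-bookkeeping in this transport. Relation (\ref{BnT4}), $h_ir_ih_i=h_i$, has no literal analogue in $\Gamma_n$, where (\ref{qis1}) only gives $\e_i\rho_i\e_i=\e_i\pi_i$; hence every planar collapse manufactures a $\pi_i$ that must be shown to be either swallowed by an existing $\rho_i$ or correctly logged in the $\pi$-block. Verifying that this accounting reproduces \emph{exactly} the normal form of Corollary \ref{NFormPRIn}, with neither a spurious nor a missing $q_i$, is the delicate step, and it is precisely where the refinement constraint $I\preceq J$ must be used.
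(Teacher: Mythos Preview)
Your plan uses exactly the same ingredients as the paper---the map $\Gamma_n\to\PR(\I_n)$, surjectivity from Proposition \ref{GensPRIn}, the second--coordinate projection $\phi\colon\Gamma_n\to\PP_n$, and Lemma \ref{PiGammaLem}---and would succeed, but the paper reaches injectivity by a shorter route that dissolves precisely the ``$\pi$-bookkeeping'' you flag as delicate.

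The difference is this. You aim to transport a $\PP_n$-normal form up to $\Gamma_n$ and then track every $\pi_i$ produced by a planar collapse $\e_i\rho_j\e_i=\e_i\pi_j$. The paper instead proves injectivity directly, with no normal form in $\Gamma_n$. The key observation is that the index set $S(w)=\{i:\rho_i\text{ or }\pi_i\text{ occurs in }w\}$ is preserved by every defining relation of $\Gamma_n$ (check (\ref{qis1}) and (\ref{qis2}) in particular), hence is an invariant of the element; moreover $S(x)=\{i:\text{$I$ has a point at }i\}$ when $\lambda(x)=(I,J)$. So if $\lambda(x)=\lambda(y)=(I,J)$, both share the single prefix $\pi=\prod_{i\in S(x)}\pi_i$. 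Writing $x=\pi x'$ and $y=\pi y'$ with $x',y'$ words in $\rho_i,\e_i$ only, one has $\phi(x')=\phi(y')=J$ in $\PP_n$, and Lemma \ref{PiGammaLem} gives $\pi x'=\pi y'$ immediately. There is no accounting to do: every $\pi_i$ manufactured along any rewriting already has $i\in S(x)$, so it is absorbed by the fixed prefix $\pi$. Your normal-form strategy would eventually recover the same conclusion, but the paper's argument shows that the apparent obstacle is illusory once $S$ is seen as an invariant.
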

\begin{proof}
Let $\lambda:\Gamma_n\to\R(\IS_n)$ sending $\rho_i\mapsto r_i$, $\e_i\mapsto e_i$ and $\pi_i\mapsto q_i$. Note that each relation of $\Gamma_n$ holds in $\PR(\I_n)$ under $\lambda$, so Proposition \ref{GensPRIn} implies that $\lambda(\Gamma_n)=\PR(\I_n)$. Also, let $\phi:\Gamma_n\to\PP_n$ sending $\rho_i\mapsto r_i$, $\e_i\mapsto h_i$ and $\pi_i\mapsto 1$. As above, each relation of $\Gamma_n$ holds in $\PP_n$ under $\phi$, so it is an epimorphism as well.

Observe that, if $x\in\Gamma_n$ with $\lambda(x)=(I,J)$, then $\phi(x)=J$. Further, in virtue of (\ref{qs2}) and (\ref{qis2}), if $\rho_{i_1},\ldots,\rho_{i_k}$ and  $\pi_{j_1},\ldots,\pi_{j_t}$ are the $\rho_i$'s and $\pi_i$'s generators occurring in $x$, then each $\pi_j$ with the same index of a $\rho_i$ can be removed. So, we can assume that $\{i_1,\ldots,i_k\}$ and $\{j_1,\ldots,j_t\}$ are disjoint. Note also that $x$ has such generators if and only if $I=r_{i_1}\cdots r_{i_k}r_{j_1}\cdots r_{j_t}$. Again, because of (\ref{qs2}), (\ref{qis1}) and (\ref{qis2}), we have $x=\pi x'$, where $\pi=\pi_{i_1}\cdots\pi_{i_k}\pi_{j_1}\cdots\pi_{j_t}$ and $x'$ is obtained by removing the $\pi_i$'s from $x$.

Now, let $x,y\in\Gamma_n$ such that $\lambda(x)=(I,J)=\lambda(y)$, so, as mentioned above, we have $\phi(x)=J=\phi(y)$ with $x=\pi x'$ and $y=\pi y'$. So, Lemma \ref{PiGammaLem} implies $x=\pi x'=\pi y'=y$. Therefore $\lambda$ is an isomorphism.
\end{proof}

\subsection{The planar ramified Jones monoid}\label{5.3}

Here we prove that the planar ramified monoid $\PR(\J_n)$ is isomorphic to the monoid $\tTL_n$ obtained as a specialization of the {\it tied Temperley--Lieb algebra}, defined in \cite{AiJuPa2023}. See Theorem \ref{PRJnisomTTL}. The monoid $\tTL_n$ is presented by generators $\e_1,\ldots,\e_{n-1}$ satisfying (\ref{ep1}), and $\tau_1,\ldots,\tau_{n-1}$, $\phi_1,\ldots,\phi_{n-1}$, subject to the following relations:\begin{align}
\tau_i^2=\tau_i,&\quad\tau_i\tau_j=\tau_j\tau_i,\quad|i-j|>1,\\
\tau_i\tau_j\tau_i=\tau_i,&\quad|i-j|=1,\\
\phi_i^2=\phi_i,&\quad\phi_i\phi_j=\phi_j\phi_i,\quad|i-j|>1,\\
\e_i\tau_i=\tau_i\e_i=\tau_i,&\quad\e_i\phi_j=\phi_j\e_i,\quad\phi_i\e_i=\phi_i\quad\tau_i\phi_i=\phi_i\tau_i,\\
\e_i\tau_j=\tau_j\e_i,&\quad\phi_i\tau_j=\tau_j\phi_i,\quad|i-j|>1,\\
\tau_i\e_j\tau_i=\tau_i,&\quad\phi_i\e_j=\e_j\phi_i=\e_j\tau_i\e_j,\quad|i-j|=1.
\end{align}
Note that the submonoid of $\tTL_n$ generated by the $\tau_i$'s is isomorphic to the Jones monoid $\J_n$.

For each $i\in[n-1]$ we will denote by $f_i$ the ramified partition $(t_i,h_i)$, which is represented as in Figure \ref{efeGen}.
\begin{figure}[H]
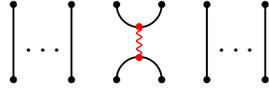

\figureforfiv
\caption{Generator $f_i$.}
\label{efeGen}
\end{figure}

\begin{theorem}\label{PRJnisomTTL}
The monoids $\PR(\J_n)$ and $\tTL_n$ are isomorphic. In consequence, $\PR(\J_n)$ is presented by generators $t_1,\ldots,t_{n-1}$ satisfying (\ref{JnRels}), $e_1,\ldots,e_{n-1}$ satisfying (\ref{TSn1}), and $f_1,\ldots,f_{n-1}$, subject to the following relations:\begin{align}
f_i^2=f_i,&\quad f_if_j=f_jf_i,\quad|i-j|>1,\\
e_it_i=t_ie_i=t_i,&\quad e_if_j=f_je_i,\quad f_ie_i=f_i,\quad t_if_i=f_it_i,\\
e_it_j=t_je_i,&\quad f_it_j=t_jf_i,\quad|i-j|>1,\\
t_ie_jt_i=t_i,&\quad f_ie_j=e_jf_i=e_jh_ie_j,\quad|i-j|=1.
\end{align}
\end{theorem}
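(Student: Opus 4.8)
The plan is to realize the isomorphism as the map $\Phi\colon\tTL_n\to\PR(\J_n)$ defined on generators by $\tau_i\mapsto t_i$, $\e_i\mapsto e_i$ and $\phi_i\mapsto f_i$, and then to argue as in the proofs of Theorems~\ref{presRIn} and~\ref{presPRIn}. First I would check that each defining relation of $\tTL_n$ holds among $t_i,e_i,f_i$ in $\PR(\J_n)$. Since $t_i=(t_i,t_i)$, $e_i=(1,h_i)$, $f_i=(t_i,h_i)$ and the product of ramified partitions is componentwise, every relation splits into a pair of identities in $\CC_n$ -- the Jones relations~(\ref{JnRels}) on the $I$--components and the matching $\PP_n$--identities on the $J$--components -- which are verified diagrammatically; for instance $\e_i\tau_i=\tau_i$ becomes $h_i* t_i=t_i$, and $\phi_i\e_j=\e_j\tau_i\e_j$ (for $|i-j|=1$) records $h_j* t_i* h_j=h_i* h_j$. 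This makes $\Phi$ a homomorphism. Surjectivity then follows from a generation statement proved exactly as Proposition~\ref{GensPRIn}: given $(I,J)$ with $I\in\J_n$ and $I\preceq J\in\PP_n$, one writes $I$ as a product of tangles $t_i$, and realizes the planar coarsening $J\succeq I$ by conjugating the adjacent ramification idempotents $e_i$ along $I$, using $f_i$ wherever a cap of $I$ is itself a tied block of $J$; planarity of both $I$ and $J$ guarantees that only these adjacent generators occur.

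For injectivity I would postcompose $\Phi$ with the two componentwise projections, which are monoid morphisms because the product on $\PR(\J_n)$ is componentwise: $\psi\colon\tTL_n\to\J_n$, $(I,J)\mapsto I$, sending $\tau_i,\phi_i\mapsto t_i$ and $\e_i\mapsto1$; and $\Theta\colon\tTL_n\to\PP_n$, $(I,J)\mapsto J$, sending $\tau_i\mapsto t_i$ and $\e_i,\phi_i\mapsto h_i$. By construction $\Phi(x)=(\psi(x),\Theta(x))$, so proving $\Phi$ injective amounts to showing that the defining relations of $\tTL_n$ reduce every word to a normal form that can be read off from the pair $(I,J)$. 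I would set this normal form up in parallel with Corollary~\ref{NFormPRIn}: a planar ramification part assembled from the $e_i$ on the top and bottom, a reduced $\J_n$--word in the $t_i$, and the $\phi_i$ marking exactly those caps of $I$ that $J$ ties together. The reduction to this form mirrors Propositions~\ref{normalOmega} and~\ref{normalUpsilon}: the commutations $\e_i\tau_j=\tau_j\e_i$, $\e_i\phi_j=\phi_j\e_i$, $\tau_i\phi_i=\phi_i\tau_i$ push the ramification letters outwards, while $\e_i\tau_i=\tau_i\e_i=\tau_i$, $\phi_i\e_i=\phi_i$ and $\tau_i\e_j\tau_i=\tau_i$ absorb the redundant ones.

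The hard part will be completeness of the relations, i.e.\ the step playing the role of Lemma~\ref{PiGammaLem}. The subtlety is that the relations determining the $J$--component live in the submonoid $\langle t_i,h_i\rangle\subseteq\PP_n$, and an identity there need not lift verbatim to $\tTL_n$ once the ramification carried alongside a cap is taken into account; two words with the same $\psi(x)=I$ and the same $\Theta(x)=J$ must nonetheless be forced equal. I would prove a lemma asserting that any identity $a=b$ in $\langle t_i,h_i\rangle$ lifts to $\varepsilon a'=\varepsilon b'$ in $\tTL_n$, where $a',b'$ are the lifts obtained by replacing $h_i$ with $\e_i$ and $t_i$ with $\tau_i$ or $\phi_i$ according to whether the corresponding cap belongs to $I$, and $\varepsilon$ is the product of the idempotents $\e_i$ (or $\phi_i$) indexed by the caps occurring in $a$ or $b$; the governing computation is $\phi_i(\e_j\tau_i\e_j)=\phi_i\e_j$, which follows at once from $\phi_i^2=\phi_i$ and $\phi_i\e_j=\e_j\tau_i\e_j$, exactly mirroring $\pi_i(\e_j\rho_i\e_j)=\pi_i\e_j$ in the proof of Lemma~\ref{PiGammaLem}. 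Granting this, $\psi(x)=\psi(y)$ and $\Theta(x)=\Theta(y)$ force $x=y$, so $\Phi$ is an isomorphism; the identification of $\tTL_n$ with the stated specialization of the tied Temperley--Lieb algebra of \cite{AiJuPa2023} is then immediate.
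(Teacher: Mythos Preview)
Your overall architecture (define $\Phi$, verify relations, prove surjectivity, prove injectivity) matches the paper's, but the execution diverges from the paper at both of the substantive steps, and at least one of those divergences is a genuine gap.

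\textbf{Surjectivity.} The paper does not argue ``as in Proposition~\ref{GensPRIn}''. Instead it builds an explicit normal form in three layers: Lemma~\ref{PRJgensL1} writes $(I,I)$ as $H(i_1,j_1)\cdots H(i_k,j_k)$ using the index sets $a(I),b(I)$; Lemma~\ref{PRJgensL2} handles \emph{flat} ramified partitions $(I,R)$ by replacing selected $\tau_p$'s by $\phi_p$'s; and Lemma~\ref{PRJgensL3} treats the \emph{non-flat} case by first passing to an auxiliary $\tilde I$ with only vertical ties and then substituting $\e_p$ for the relevant $\phi_p$'s. Your sketch (``conjugating the adjacent ramification idempotents $e_i$ along $I$, using $f_i$ wherever a cap of $I$ is itself a tied block of $J$'') does not account for horizontal ties between non-adjacent arcs, which is precisely what forces the $\tilde I$ detour and the appearance of $\e_p$'s interleaved with the $\tau$'s. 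Without this, your generation argument is incomplete.

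\textbf{Injectivity.} Here the paper and you take genuinely different routes. The paper uses only the projection to $\J_n$: for $u\in\tTL_n$ it forms $\bar u$ by deleting the $\e_i$'s and replacing each $\phi_i$ by $\tau_i$, observes that $\theta(u)=\theta(v)$ forces $\bar u=\bar v$ in the Jones submonoid, and then invokes \cite[Lemma~41]{AiArJu2023} to conclude $u=v$. Your approach via the pair of projections and a Lemma~\ref{PiGammaLem}-style lifting is more self-contained in spirit, but your proposed lemma is not well posed: in Lemma~\ref{PiGammaLem} one lifts identities from $\PP_n=\langle r_i,h_i\rangle$, which has the explicit presentation (\ref{BnT1})--(\ref{BnT5}) to induct on, whereas you would need to lift identities from the submonoid $\langle t_i,h_i\rangle\subset\PP_n$ (planar partitions with all blocks of even size), for which no presentation is given. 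Moreover your lift ``$t_i\mapsto\tau_i$ or $\phi_i$ according to whether the corresponding cap belongs to $I$'' is not well defined on words, since an occurrence of $t_i$ in a word for $J$ does not single out a cap of $I$. The single computation $\phi_i(\e_j\tau_i\e_j)=\phi_i\e_j$ is correct but falls well short of covering all identities you would need. If you want to make your route work, you must either produce a presentation of $\langle t_i,h_i\rangle$ and verify each relation lifts, or instead follow the paper and reduce to $\bar u=\bar v$ in $\J_n$ before invoking an external rewriting lemma.
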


To prove Theorem \ref{PRJnisomTTL}, we need the following proposition and some technical lemmas below.

\begin{proposition}
The cardinality of $\PR(\J_n)$ is the Fuss--Catalan number $C(4,n)$.
\end{proposition}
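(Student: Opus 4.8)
The plan is to count $\PR(\J_n)$ by projecting onto its coarser coordinate and then assembling a generating function. Recall that $\PR(\J_n)$ consists of pairs $(I,J)$ with $I\in\J_n$, $J\in\PP_n$ and $I\preceq J$; diagrammatically $I$ is a planar perfect matching of the $2n$ boundary points of the disc and $J$ is a planar partition of those points that coarsens $I$. First I would record the obvious structural constraint: since every block of $J$ is a union of size-two blocks of $I$, the partitions $J$ that can occur are exactly the noncrossing (planar) partitions of $[2n]$ all of whose blocks have even size.

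Next I would compute the fibre of the projection $(I,J)\mapsto J$. For a fixed admissible $J$, choosing $I$ amounts to choosing, inside each block $B$ of $J$, a noncrossing perfect matching of its $|B|$ points. Because $J$ is noncrossing its blocks are laminar, so arcs belonging to two different blocks can never cross; hence any independent choice of a noncrossing matching inside each block recombines into a globally planar matching $I$ with $I\preceq J$, and conversely. Since the number of noncrossing perfect matchings of a $2m$-element set is the Catalan number $c_m$, the fibre over $J$ has size $\prod_{B\in J}c_{|B|/2}$. Therefore
\[
|\PR(\J_n)|=\sum_{J}\ \prod_{B\in J}c_{|B|/2},
\]
where the sum runs over all noncrossing partitions $J$ of $[2n]$ whose blocks have even size.

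I would then evaluate this sum with the standard ``block containing $1$'' decomposition of noncrossing partitions, assigning to a block of size $k$ the weight $w_k=c_{k/2}$ for $k$ even and $w_k=0$ for $k$ odd. Writing $G(x)=\sum_{m\ge0}g_mx^m$ with $g_m=\sum_{J}\prod_{B}w_{|B|}$ over all noncrossing partitions of $[m]$, the decomposition gives $G=1+\sum_{k\ge1}w_k(xG)^k=1+\sum_{m\ge1}c_m(xG)^{2m}=C\!\big((xG)^2\big)$, where $C(z)=\sum_{m\ge0}c_mz^m$ is the Catalan series. Using $C(z)=1+zC(z)^2$ together with $G=C(x^2G^2)$ collapses the equation to $G=1+x^2G^4$. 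Substituting $u=x^2$ and $H(u)=G(x)$ (an even series) yields $H=1+uH^4$, which is precisely the functional equation of the Fuss--Catalan series $\sum_{n}C(4,n)\,u^n$; comparing coefficients gives $g_{2n}=C(4,n)$, that is $|\PR(\J_n)|=C(4,n)$.

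The generating-function manipulation is routine; the step that will require genuine care is the fibre computation, namely proving that independent noncrossing matchings of the blocks of a noncrossing $J$ recombine \emph{bijectively} into the planar matchings refining $J$, with no inter-block crossings ever arising (this is where laminarity of noncrossing partitions is used). I would also verify the small cases $n=1,2,3$, which give $1,4,22$, against the formula $C(4,n)=\tfrac{1}{3n+1}\binom{4n}{n}$ to guard against an off-by-one in the weight convention $w_k=c_{k/2}$.
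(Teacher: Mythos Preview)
Your argument is correct. The fibre computation is the heart of it, and the key observation---that two chords lying in distinct blocks of a noncrossing partition can never cross because the blocks themselves do not interleave on the circle---is exactly what makes the product $\prod_{B\in J}c_{|B|/2}$ count the planar matchings refining $J$. The generating-function reduction $G=C(x^2G^2)\Rightarrow G=1+x^2G^4$ is clean, and the identification with the Fuss--Catalan series for $p=4$ is standard Lagrange inversion.

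Your route, however, differs substantially from the paper's. The paper does not compute anything: it quotes \cite{Ai20} for the fact that the number of pairs $(I,J)$ with $I$ a noncrossing perfect matching of $[2n]$ and $J$ a noncrossing coarsening of $I$ equals $C(4,n)$, and then observes that the disc deformation sending the $2n$ collinear points to the $2n$ boundary points of the rectangle (the same map underlying $\PP_n\simeq\J_{2n}$) carries this set bijectively onto $\PR(\J_n)$. So the paper's proof is a one-line transfer of a cited result, whereas yours is a self-contained derivation of that very result via the projection $(I,J)\mapsto J$ and the block-containing-$1$ recursion. What you gain is independence from the external reference and an explicit algebraic mechanism ($H=1+uH^4$) explaining why the $4$ in $C(4,n)$ appears; what the paper gains is brevity. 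Note that you are implicitly using the same disc-boundary identification the paper makes explicit, so the one point worth stating more carefully in a write-up is that ``planar'' in the rectangle model and ``noncrossing'' on the circle coincide as conditions on both $I$ and $J$, which is what licenses the linear noncrossing-partition recursion you invoke.
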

\begin{proof}
Let $M$ be the set of ramified partitions $(I,J)$ of $[2n]$ such that $I$ is planar and each of its blocks contains exactly two elements. It was shown in \cite{Ai20} that, for every even integer $n$, the number of elements in $M$ is the Fuss--Catalan number $C(4,n)=\frac{1}{3n+1}\binom{4n}{n}$ \cite[A002293]{OEIS}. To show the result, for every ramified partition $(I,J)\in M$, it is enough to consider a topological open disc containing the tied arc diagram of $(I,J)$ excluding the middle point of the interval in which the endpoints of the arcs are located. So, we get an element of $\PR(\J_n)$ by a continuous deformation of this disc. See Figure \ref{openDiscDef}. 
\begin{figure}[H]
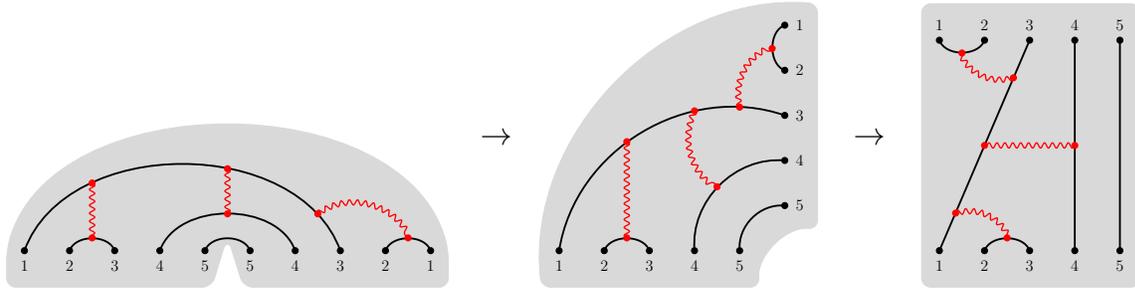

\figureforfor
\caption{Continuous transformation of the open disc.}
\label{openDiscDef}
\end{figure}
Since no crossings are introduced, the transformation above defines a bijection between $\PR(\J_n)$ and $M$. Thus, $|\PR(\J_n)|=C(4,n)$.
\end{proof}

\begin{lemma}
The mapping sending $\tau_i\mapsto t_i$, $\phi_i\mapsto f_i$ and $\e_i\mapsto e_i$ defines a homomorphism $\theta:\tTL_n\to\PR(\J_n)$.
\end{lemma}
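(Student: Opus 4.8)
The plan is to invoke the universal property of the presentation of $\tTL_n$: a map out of $\tTL_n$ is determined by the images of the generators, and it is a well-defined homomorphism exactly when those images satisfy every defining relation of $\tTL_n$. So it suffices to verify that the elements $t_i,f_i,e_i\in\PR(\J_n)$ satisfy the relations listed for $\tTL_n$ (under $\tau_i\mapsto t_i$, $\phi_i\mapsto f_i$, $\e_i\mapsto e_i$). I will compute inside $\R(\CC_n)$, whose product is the componentwise concatenation inherited from $\CC_n\times\CC_n$; in the diagrammatic realization one has $e_i=(1,h_i)$, $t_i=(t_i,t_i)$ and $f_i=(t_i,h_i)$, each of which lies in $\PR(\J_n)$ since $t_i\in\J_n$, $h_i\in\PP_n$ and $t_i\preceq h_i$. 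Because the product is componentwise, checking a relation amounts to checking it separately in each coordinate.

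The relations split into three groups. Those involving only the $e_i$ map to $e_i^2=e_i$ and $e_ie_j=e_je_i$; since $e_i=(1,h_i)$ these reduce to $h_i^2=h_i$ and $h_ih_j=h_jh_i$, i.e. to (\ref{BnT2}). Those involving only the $t_i$ are the Jones relations and hold because $t_i=(t_i,t_i)$ and (\ref{JnRels}) holds in $\J_n$. For each mixed relation I compute both sides coordinatewise: the first coordinate always reduces to a relation of $\J_n$, while the second coordinate reduces to an identity among the $h_i$'s and $t_i$'s in $\PP_n$. In this way $\e_i\phi_j=\phi_j\e_i$ (all $i,j$), $\phi_i\e_i=\phi_i$, and $\phi_i\phi_j=\phi_j\phi_i$ for $|i-j|>1$ become immediate from the idempotency and commutativity of the $h_i$, while $\phi_i\tau_j=\tau_j\phi_i$ and $\e_i\tau_j=\tau_j\e_i$ for $|i-j|>1$ follow from the commutation $h_it_j=t_jh_i$ of well-separated generators.

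The substance of the proof is the few $\PP_n$-identities that arise in the second coordinate of the remaining mixed relations, namely $h_it_i=t_ih_i=t_i$ (accounting for $\e_i\tau_i=\tau_i\e_i=\tau_i$ and $\tau_i\phi_i=\phi_i\tau_i$), $t_ih_jt_i=t_i$ for $|i-j|=1$ (accounting for $\tau_i\e_j\tau_i=\tau_i$), and $h_jt_ih_j=h_ih_j$ for $|i-j|=1$ (accounting for $\phi_i\e_j=\e_j\phi_i=\e_j\tau_i\e_j$). Each is verified by stacking the two or three linear graphs involved and deleting the closed loops that are formed. I expect the last identity to be the main obstacle: there a cap--cup tangle $t_i$ is sandwiched between two joined blocks $h_j$, and one must trace the resulting connected component with care to see that it collapses onto the single block of $h_ih_j$ spanning the three affected positions. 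Once these diagram computations are carried out, all defining relations of $\tTL_n$ hold for $t_i,f_i,e_i$ in $\PR(\J_n)$, and therefore $\theta$ is a well-defined monoid homomorphism.
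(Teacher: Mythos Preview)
Your proposal is correct and follows exactly the same approach as the paper: verify that the images $t_i,f_i,e_i$ satisfy every defining relation of $\tTL_n$, so that the universal property of the presentation yields the homomorphism. The paper states this in one line, while you spell out the componentwise verification and isolate the key $\PP_n$-identities $h_it_i=t_ih_i=t_i$, $t_ih_jt_i=t_i$ and $h_jt_ih_j=h_ih_j$ for $|i-j|=1$; this added detail is sound and does not depart from the paper's method.
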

\begin{proof}
The proof follows from the fact that the mapping sends each defining relation of $\tTL_n$ to an identity of $\PR(\J_n)$.
\end{proof}

To show that $\theta$ is an epimorphism (Proposition \ref{PRJgens}), we first need to prove Lemma \ref{PRJgensL1}, Lemma \ref{PRJgensL2} and Lemma \ref{PRJgensL3}.

For each $I\in\J_n$, we will denote by $a(I)$ the set of indexes $i\in[n-1]$ such that either $\{j<i+1\}$ is a block of $I$ or $\{i+1<j\}$ with $2n-j+1\leq i$ is a block of $I$. Similarly, we will denote by $b(I)$ the set of indexes $i\in[n-1]$ such that $\{j<2n-i+1\}$ is a block of $I$, where either $n<j$ or $i<j\leq n$. See Figure \ref{a(I)b(I)} for an instance.
\begin{figure}[H]
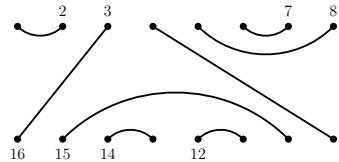

\figureforsix
\caption{For $I=\{\{1,2\},\{3,16\},\{4,9\},\{5,8\},\{6,7\},\{10,15\},\{11,12\},\allowbreak\{13,14\}\}$, we have $a(I)=\{1,2,6,7\}$ and $b(I)=\{1,2,3,5\}$.}
\label{a(I)b(I)}
\end{figure}

\begin{remark}\label{remProc}
Note that $j\in a(I)$ if and only if $j+1$ is the upper right endpoint of an arc of $I$, and $i\in b(I)$ if and only if $2n-i+1$ is the lower left endpoint of an arc of $I$. Observe that in both cases if the arc is a line then its upper endpoint is to the right of the lower endpoint. This implies that $|a(I)|=|b(I)|$ because the number of arcs with both endpoints $\le n$, and the number of arcs with both endpoints $>n$ coincide. Also, note that, because of planarity, the sets $a(I)$ and $b(I)$ determine uniquely the set partition $I$.
\end{remark}

In what follows, for $(I,J)\in\{t_i,f_i,e_i\}$, the subset of $I$ formed by the two arcs that are contained in $\{i,i+1,2n-i+1,2n-i\}$ will be called the {\it core} of $(I,J)$.

For each $i,j\in[n-1]$ with $i\leq j$, define $H(i,j)=\tau_j\tau_{j-1}\cdots\tau_i$.

\begin{lemma}\label{PRJgensL1}
For every $I\in\J_n$, we have $\theta(H(i_1,j_1)\cdots H(i_k,j_k))=(I,I)$, where $a(I)=\{j_1<\cdots<j_k\}$ and $b(I)=\{i_1<\ldots<i_k\}$.
\end{lemma}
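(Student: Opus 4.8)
The plan is to push everything through $\theta$ into the Jones monoid $\J_n$ and then finish with the uniqueness statement of Remark \ref{remProc}. Since $\theta$ is a homomorphism with $\theta(\tau_i)=t_i$, and since $\J_n$ sits inside its planar ramified monoid via $t\mapsto(t,t)$, we have $\theta(H(i_1,j_1)\cdots H(i_k,j_k))=(D,D)$, where $D=\prod_{\ell=1}^k\bigl(t_{j_\ell}t_{j_\ell-1}\cdots t_{i_\ell}\bigr)$ is computed in $\J_n$. Hence it suffices to prove the purely diagrammatic statement that $D=I$; and by Remark \ref{remProc}, since $(a(\cdot),b(\cdot))$ determines an element of $\J_n$ uniquely, this reduces to verifying the two equalities $a(D)=\{j_1<\cdots<j_k\}$ and $b(D)=\{i_1<\cdots<i_k\}$.

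First I would record the diagram of a single staircase $t_jt_{j-1}\cdots t_i$ (with $i\le j$). A direct trace through the stacked generators shows it is the Jones diagram whose only nontrivial features are the top cup $\{j,j+1\}$, the bottom cap $\{i,i+1\}$, straight through-lines on the columns outside $[i,j+1]$, and through-lines carrying columns $i,\ldots,j-1$ rightward onto $i+2,\ldots,j+1$. In particular $a(t_j\cdots t_i)=\{j\}$ and $b(t_j\cdots t_i)=\{i\}$, which settles the base case $k=1$ (and, for $i=j$, the bare generator $t_i$).

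The core of the argument is an induction on $k$ in which the factor of largest indices is appended on the right. Set $D_m=\prod_{\ell\le m}\bigl(t_{j_\ell}\cdots t_{i_\ell}\bigr)$; the inductive hypothesis is $D_{m-1}\in\J_n$ with $a(D_{m-1})=\{j_1,\ldots,j_{m-1}\}$ and $b(D_{m-1})=\{i_1,\ldots,i_{m-1}\}$. One then analyses $D_m=D_{m-1}\,(t_{j_m}\cdots t_{i_m})$: stacking the new staircase at the bottom installs a genuine new bottom cap $\{i_m,i_m+1\}$, which contributes $i_m$ to $b$, while its top cup $\{j_m,j_m+1\}$, traced upward through $D_{m-1}$, promotes column $j_m+1$ to be the upper-right endpoint of an arc (either a newly formed cup or a newly formed leftward line, depending on whether the two columns it meets in the bottom row of $D_{m-1}$ are through-endpoints or are absorbed with an existing cap), which contributes $j_m$ to $a$. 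Because $i_m$ and $j_m$ strictly exceed every previously used index, planarity keeps all earlier cups, caps and slanted lines — hence all earlier entries of $a$ and $b$ — unchanged, giving $a(D_m)=a(D_{m-1})\cup\{j_m\}$ and $b(D_m)=b(D_{m-1})\cup\{i_m\}$.

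The hard part will be exactly this inductive step: one must verify, through a careful case analysis and using planarity together with the maximality of $(i_m,j_m)$ and the inequality $i_m\le j_m$ forced by planarity, that appending the staircase always makes $j_m+1$ an upper-right endpoint and $\{i_m,i_m+1\}$ a lower cap, and that it never reclassifies or destroys a previously recorded endpoint (the subtle subcase being when the new top cup meets a column already belonging to a cap of $D_{m-1}$, where an old cap is silently absorbed into a new leftward line yet the index bookkeeping is preserved). Granting this, the induction yields $a(D)=\{j_1<\cdots<j_k\}$ and $b(D)=\{i_1<\cdots<i_k\}$, so Remark \ref{remProc} gives $D=I$, and therefore $\theta(H(i_1,j_1)\cdots H(i_k,j_k))=(D,D)=(I,I)$, as required.
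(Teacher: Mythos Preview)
Your reduction via Remark \ref{remProc} to the equalities $a(D)=\{j_1,\ldots,j_k\}$ and $b(D)=\{i_1,\ldots,i_k\}$ is exactly the paper's reduction, and your explicit description of a single staircase $t_j\cdots t_i$ is correct. Where you diverge is in the main argument: you run an induction on the number of staircases, whereas the paper argues globally on the word $h=H(i_1,j_1)\cdots H(i_k,j_k)$. The paper's point is simply that, since $j_1<\cdots<j_k$, every letter preceding the first $\tau_{j_\ell}$ in $h$ has index strictly less than $j_\ell$; hence no $\tau_{j_\ell+1}$ occurs above that first $\tau_{j_\ell}$, so the upper dot $j_\ell+1$ reaches the top row of the diagram untouched and is an upper\nobreakdash-right arc endpoint of $\theta(h)$, i.e.\ $j_\ell\in a(\theta(h))$. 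Dually, since $i_1<\cdots<i_k$, the last $\tau_{i_\ell}$ in $h$ has only letters of larger index after it, so the lower dot $2n-i_\ell+1$ is a lower\nobreakdash-left arc endpoint and $i_\ell\in b(\theta(h))$. This avoids your inductive bookkeeping entirely.

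Your inductive step as written is plausible but, as you yourself flag, not complete: the sentence ``because $i_m$ and $j_m$ strictly exceed every previously used index'' is not literally true (one only has $i_m>i_\ell$ and $j_m>j_\ell$, not $i_m>j_\ell$), so the new staircase can overlap the old diagram nontrivially, and the preservation of earlier $a$\nobreakdash- and $b$\nobreakdash-entries really does require the case analysis you defer. The paper's word\nobreakdash-level argument buys exactly this: it never needs to understand how the partial product $D_{m-1}$ looks, only which indices occur to the left or right of a given letter in the word. If you want to keep your inductive framing, the cleanest fix is to replace the diagrammatic case analysis by the same observation at the word level: appending $H(i_m,j_m)$ adds only letters with indices in $[i_m,j_m]$, and since these are all $\geq i_m>i_{m-1}$ and the leading letter $\tau_{j_m}$ has index $>j_{m-1}$, the ``first $\tau_{j_\ell}$ has no $\tau_{j_\ell+1}$ before it'' and ``last $\tau_{i_\ell}$ has no $\tau_{i_\ell-1}$ after it'' properties are preserved and extended. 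That is essentially the paper's proof unwound along your induction.
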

\begin{proof}
({Cf. \cite[Aside 4.1.4.]{Jo1983}}) Let $h=H(i_1,j_1)\cdots H(i_k,j_k)$. Since $i_1<\ldots<i_k$ and $j_1<\cdots<j_k$, then for each $j\in a(I)$, the first occurrence of $\tau_j$ in $h$ is encountered in $H(i,j)$ for some $i\in b(I)$. Moreover, if a generator $\tau_q$ precedes $\tau_j$ in $h$, then $q<j$, and so the core of $\theta(\tau_q)$ appears on the left of the one of $\theta(\tau_j)$ in $\theta(h)$. Therefore, the upper endpoint $j+1$ of the core of $t_j=\theta(\tau_j)$ cannot be the lower endpoint of another core, that is, the letter $\tau_{j+1}$ does not occur on the left of $\tau_j$ in $h$. This implies that $j+1$ is an endpoint of an arc of $I$. Similarly, for each $i\in b(I)$, the last occurrence of $\tau_i$ in $h$ is encountered in $H(i,j)$ for some $j\in a(I)$, so the lower endpoint $2n-i+1$ of $t_i=\theta(\tau_i)$ cannot be the upper endpoint of another core, which implies that it is an endpoint of an arc of $I$ as well. Thus, the lemma follows from Remark \ref{remProc}.
\end{proof}
See Figure \ref{(I,I)exampPRJ} for an example.
\begin{figure}[H]
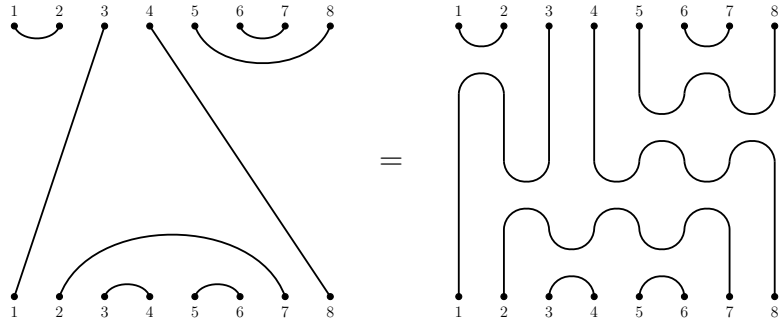

\figureforeig
\caption{The element $(I,I)$ with $I=\{\{1,2\},\{3,16\},\{4,9\},\{5,8\},\allowbreak\{6,7\},\{10,15\},\{11,12\},\{13,14\}\}$ is obtained from $\tau_1\cdot\tau_2\cdot\tau_6\tau_5\tau_4\tau_3\cdot\tau_7\tau_6\tau_5$.}
\label{(I,I)exampPRJ}
\end{figure}

For each $i\in[2n]$, we set $i'=2n-i+1$ if $i>n$ and $i'=i$ otherwise. In the sequel we will denote arcs by $(i,j)$ whenever $i'<j'$. Two arcs $(i,j)$ and $(h,k)$ of a set partition of $\J_n$ are said to be {\it vertically adjacent} if $i'<k'$, $h'<j'$ and a tie connecting them does not cross other arcs. A tie is called {\it vertical} if it connects two vertically adjacent arcs, otherwise it is called {\it horizontal}. A ramified partition $(I,J)\in\R(\J_n)$ is called {\it flat} if it can be represented with no horizontal ties. See Figure \ref{HorizRPart}.
\begin{figure}[H]
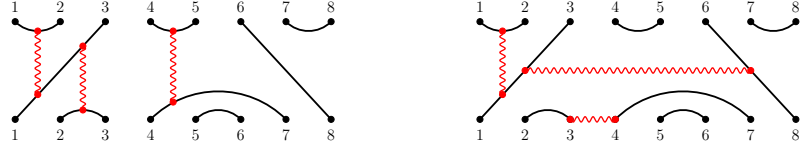

\figureforsev
\caption{A flat ramified partition together with a non flat one.}
\label{HorizRPart}
\end{figure}

\begin{lemma}\label{PRJgensL2}
For every flat $(I,R)\in\PR(\J_n)$, we have $\theta(\hat{H}(i_1,j_1)\cdots\hat{H}(i_k,j_k))=(I,R)$, where $a(I)=\{j_1<\cdots<j_k\}$, $b(I)=\{i_1<\ldots<i_k\}$ and $\hat{H}(i,j)=\hat{\tau}_j\hat{\tau}_{j-1}\cdots\hat{\tau}_i$ for all $i\leq j$, with $\hat{\tau}_p=\phi_p$ if $p$ is the minimum index such that the core of $\theta(\hat{\tau}_p)$ is contained in a pair of arcs that belong to the same block of $R$, and $\hat{\tau}_p=\tau_p$ otherwise.
\end{lemma}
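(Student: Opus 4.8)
The plan is to build directly on Lemma~\ref{PRJgensL1}, which already produces $(I,I)$ from the word $H(i_1,j_1)\cdots H(i_k,j_k)$ with exactly the data $a(I)=\{j_1<\cdots<j_k\}$ and $b(I)=\{i_1<\cdots<i_k\}$. Since $\hat H(i_1,j_1)\cdots\hat H(i_k,j_k)$ is obtained from $H(i_1,j_1)\cdots H(i_k,j_k)$ merely by replacing some occurrences of $\tau_p$ with $\phi_p$, I would split the statement into two independent claims about $\theta(\hat H(i_1,j_1)\cdots\hat H(i_k,j_k))=(I',R')$: first that $I'=I$, and second that $R'=R$.

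For the first coordinate I would observe that $\theta(\tau_p)=(t_p,t_p)$ and $\theta(\phi_p)=(t_p,h_p)$ share the same first coordinate $t_p$. As the projection $\PR(\J_n)\to\J_n$ onto the first coordinate is a monoid homomorphism, replacing any $\tau_p$ by $\phi_p$ in the word leaves the first coordinate of the image unchanged. Hence $I'$ equals the first coordinate of $\theta(H(i_1,j_1)\cdots H(i_k,j_k))$, which is $I$ by Lemma~\ref{PRJgensL1}. This disposes of the easy half.

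The substance of the lemma is the identification $R'=R$, and here I would argue diagrammatically. The only difference between $\theta(\phi_p)$ and $\theta(\tau_p)$ is a single vertical tie joining the two arcs of the core at level $p$; meanwhile multiplication in $\PR(\J_n)$ composes cores exactly as in the proof of Lemma~\ref{PRJgensL1} and transports ties along the strands that are glued. As each core is multiplied in, its two strands are absorbed into a pair of arcs of $I$, so for every index $p$ at which $\hat\tau_p=\phi_p$ I would track the pair of arcs of $I$ that the corresponding tie finally connects. Flatness of $(I,R)$ is what makes this bookkeeping tractable: each block of $R$ is a maximal vertical stack of arcs of $I$ joined by vertical ties, and this stacking coincides with the order in which the relevant cores are composed.

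The main obstacle is the two-sided verification that the resulting ties are neither too few nor too many, i.e. that the blocks of $R'$ coincide with those of $R$. For the inclusion showing that no required tie is missing, I would argue that the minimality rule --- placing $\phi_p$ at the least index whose core sits inside a pair of arcs lying in a common block of $R$ --- selects, within each block, enough ties at first opportunities so that transitivity of ties connects the entire vertical stack. For the reverse inclusion I would check that whenever $\hat\tau_p=\tau_p$ the composition introduces no tie, and that a tie placed for one block cannot leak into another, which holds because distinct blocks of the flat partition $R$ occupy disjoint vertical stacks that the concatenation never joins. Combining the two inclusions yields $R'=R$, and together with $I'=I$ this establishes the lemma.
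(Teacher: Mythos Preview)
Your outline follows the same route as the paper's proof: reduce the first coordinate to Lemma~\ref{PRJgensL1} via the projection $\PR(\J_n)\to\J_n$, and then argue diagrammatically that the ties produced by the $\phi_p$'s yield exactly $R$. That strategy is correct, and your treatment of the first coordinate is fine.

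The gap is in the ``not too few ties'' direction. You assert that ``the minimality rule\ldots selects, within each block, enough ties at first opportunities so that transitivity of ties connects the entire vertical stack,'' but you never justify that such a first opportunity exists. In other words, given two vertically adjacent arcs of $I$ lying in the same block of $R$, why is there any index $p$ in the word $H(i_1,j_1)\cdots H(i_k,j_k)$ at which the core of $\theta(\hat\tau_p)$ consists precisely of that pair of arcs? This is the heart of the lemma, and it is not automatic: a priori the cores appearing in the Jones normal form could skip over a given adjacent pair entirely.

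The paper supplies the missing observation explicitly. An arc $(q,r)$ of $I$ is built up, in the concatenation $H(i_1,j_1)\cdots H(i_k,j_k)$, by exactly the cores $\theta(\hat\tau_i)$ with $i\in[q',r'-1]$. If $(q,r)$ and $(s,v)$ are vertically adjacent then $q'<v'$ and $s'<r'$, so the two index intervals $[q',r'-1]$ and $[s',v'-1]$ intersect nontrivially; any $p$ in this intersection gives a core whose two arcs are absorbed into $(q,r)$ and $(s,v)$ respectively, and one sets $\hat\tau_p=\phi_p$ at the minimum such $p$. This interval-overlap argument is what turns your heuristic into a proof; once you insert it, the rest of your sketch goes through essentially as the paper's does.
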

\begin{proof}
Let $\hat{h}:=\hat{H}(i_1,j_1)\cdots\hat{H}(i_k,j_k)$. Lemma \ref{PRJgensL1} implies that $I=\theta(H(i_1,j_1)\cdots H(i_k,j_k))$, so, for each $\hat{\tau}_i$ occurring in $\hat{h}$, the arcs of $I$ involved in the core of $\theta(\hat{\tau}_i)$ are well defined. Thus, choosing $\phi_i$ instead of $\tau_i$ for $\hat{\tau}_i$ means putting a tie between the arcs of the core of $\theta(\hat{\tau}_i)$, which guarantees that these arcs belong to the same block of $R$. Now, it remains to prove that if two vertically adjacent arcs are in the same block, then they will take part in the core of $\theta(\hat{\tau}_i)$ for some $\hat{\tau}_i\in\{\tau_i,\phi_i\}$. For this, it is enough to note that an arc $(q,r)$ is realized by means of $r'-q'-1$ elements $\theta(\hat{\tau}_i)$ with $i\in[q',r'-1]$ of such types. Since $(q,r)$ and $(s,v)$ are vertically adjacent, then $q'<v'$ and $s'<r'$. Moreover, since these arcs are connectible by a tie, their realizations share the nonempty set of elements $\theta(\hat{\tau}_i)$ with $i\in [q',r'-1]\cap[s',v'-1]$. We set $\hat{\tau}_p=\phi_p$, where $p$ is the minimum of such indices.
\end{proof}
See Figure \ref{(I,R)FexampPRJ} for an example.
\begin{figure}[H]
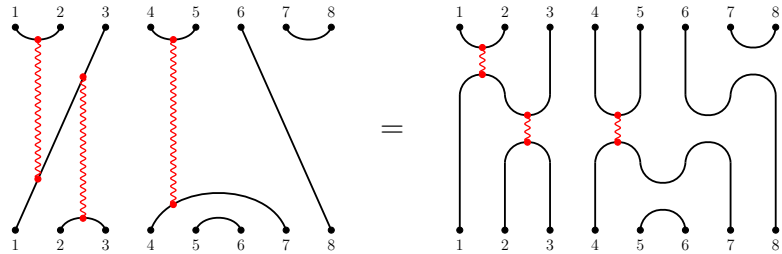

\figurefornin
\caption{The element $(I,R)$ with $I=\{\{1,2\},\{3,16\},\{4,5\},\{6,9\},\allowbreak\{7,8\},\{10,13\},\{11,12\}\}$ and $R=\{\{1,2,3,14,15,16\},\{4,5,10,13\},\{6,9\},\allowbreak\{7,8\},\{11,12\},\{14,15\}\}$ is obtained from $\phi_1\cdot\phi_2\cdot\phi_4\cdot\tau_7\tau_6\tau_5$.}
\label{(I,R)FexampPRJ}
\end{figure}

For $(I,R)\in\PR(\J_n)$, we will denote by $\tilde{I}$ the planar set partition obtained from $I$ by replacing each horizontally tied arcs $(i,j),(h,k)$ with $j'<h'$, by the vertically adjacent arcs $(i,k),(j,h)$. Note that the ramified partition $(\tilde{I},R)$ is flat.

\begin{lemma}\label{PRJgensL3}
For every non flat $(I,R)\in\PR(\J_n)$, we have $\theta(\tilde{H}(i_1,j_1)\cdots\tilde{H}(i_k,j_k))=(I,R)$, where $\tilde{H}(i_1,j_1)\cdots\tilde{H}(i_k,j_k)$ is obtained by applying Lemma \ref{PRJgensL2} to $(\tilde{I},R)$ and replacing $\phi_p$ with $\epsilon_p$ if the corresponding arcs of $\theta(\tau_p)$ belong to modified arcs of $\tilde{I}$.
\end{lemma}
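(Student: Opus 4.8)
The plan is to obtain the statement from Lemma \ref{PRJgensL2} by a purely diagrammatic local surgery. Since $(\tilde I,R)$ is flat by construction, Lemma \ref{PRJgensL2} already supplies a word $\hat H(i_1,j_1)\cdots\hat H(i_k,j_k)$ in the letters $\tau_p,\phi_p$ with $\theta(\hat H(i_1,j_1)\cdots\hat H(i_k,j_k))=(\tilde I,R)$. The word $\tilde H(i_1,j_1)\cdots\tilde H(i_k,j_k)$ is produced from this one by turning certain $\phi_p$ into $\e_p$, so the whole task is to show that this substitution converts the underlying partition $\tilde I$ back into $I$ while leaving the ramification $R$ fixed and preserving $I\preceq R$.

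First I would identify precisely which letters are replaced. By definition of $\tilde I$, every modified pair comes from a horizontally tied side-by-side pair $(i,j),(h,k)$ of $I$ with $j'<h'$, i.e. $i'<j'<h'<k'$, which in $\tilde I$ is replaced by the nested pair $(i,k),(j,h)$ carrying a vertical tie. Following the proof of Lemma \ref{PRJgensL2}, the tie between this nested pair is realized by the letter $\phi_p$ whose index is the minimum of the shared range $[i',k'-1]\cap[j',h'-1]=[j',h'-1]$, namely $p=j'$, the left endpoint of the inner arc $(j,h)$. These are exactly the letters $\phi_p$ whose core arcs of $\theta(\tau_p)=t_p$ sit on a modified nested pair of $\tilde I$, so they coincide with the letters that the statement prescribes to replace by $\e_p$.

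Second I would verify the surgery itself. The generators $f_p=(t_p,h_p)$ and $e_p$ differ only in their core: $f_p$ ties together the cup $\{p,p+1\}$ and the cap $\{2n-p,2n-p+1\}$, whereas $e_p$ ties together the two through-strands $\{p,2n-p+1\},\{p+1,2n-p\}$, and in both cases the four points $p,p+1,2n-p,2n-p+1$ lie in a single block. Hence, in the concatenation defining $\theta(\tilde H(i_1,j_1)\cdots\tilde H(i_k,j_k))$, replacing $f_p$ by $e_p$ reconnects the cup-cap of the nested pair $(i,k),(j,h)$ into the two through-strands forming the side-by-side pair $(i,j),(h,k)$, and simultaneously turns the vertical tie into a horizontal one, touching no strand outside columns $p,p+1$. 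This is exactly the inverse of the map $I\mapsto\tilde I$, so the underlying partition becomes $I$; and since $e_p$ and $f_p$ place the same four points in the same block, the ramification is unchanged and remains $R$, with $I\preceq R$ inherited from $\tilde I\preceq R$ because the reconnection neither merges nor splits blocks of $R$.

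The main obstacle is to make the locality and mutual independence of these surgeries rigorous: one must check that each replaced $\phi_p$ is attached to a single modified pair (so the substitution is unambiguous), that opening a cup-cap into through-strands never violates planarity, and that the surgeries for distinct modified pairs do not interfere. I would argue this from planarity of $I$ and $R$, which forces distinct modified pairs to occupy either disjoint column ranges or properly nested ones, so their cores occur at distinct indices $p$ and the corresponding substitutions commute. A short induction on the number of horizontal ties of $I$, peeling off one modified pair at a time and reapplying Lemma \ref{PRJgensL2} to the intermediate flat configuration at each stage, then finishes the argument.
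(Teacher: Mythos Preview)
Your proposal is correct and follows essentially the same approach as the paper. Both arguments apply Lemma~\ref{PRJgensL2} to the flat pair $(\tilde I,R)$ and then replace the relevant $\phi_p$'s by $\e_p$'s, observing that since $f_p=(t_p,h_p)$ and $e_p=(1,h_p)$ share the same second component, the substitution leaves $R$ untouched while the change $t_p\rightsquigarrow 1$ in the first component undoes the local modification $I\to\tilde I$; your version simply spells out more of the details (the index $p=j'$, the locality of the surgery, and the induction on the number of horizontal ties) than the paper's three-sentence sketch, and notes explicitly the independence of the individual replacements, which the paper leaves implicit. One small wording issue: your phrase ``the two through-strands forming the side-by-side pair $(i,j),(h,k)$'' conflates the \emph{local} effect at the $e_p$ layer (cup--cap becomes through-strands) with the \emph{global} effect on $I$ (nested pair becomes the original pair, which need not consist of through-strands, e.g.\ two lines or two brackets as in Figures~\ref{(I,R)NFexampPRJ1} and~\ref{(I,R)NFexampPRJ}); the paper also remarks that extra letters $\tau_i$ may appear compared to a direct word for $(I,I)$, but this is irrelevant to the equality $\theta(\cdots)=(I,R)$ and you are right not to dwell on it.
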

\begin{proof}
Observe that replacing $I$ by $\tilde{I}$ in $(I,R)$ is allowed because the presence of the horizontal ties guarantees that the new pieces of arcs introduced do not cross other arcs nor other ties. Moreover, note that when $\tilde{\tau}_p$ is replaced by $\e_p$, the original arcs are recovered. The sole difference is that possible new elements $\tau_i$ are introduced, such that the core of $\theta(\tau_i)$ contains arcs belonging to the same arc.
\end{proof}
See Figure \ref{(I,R)NFexampPRJ1} and Figure \ref{(I,R)NFexampPRJ} for examples.
\begin{figure}[H]
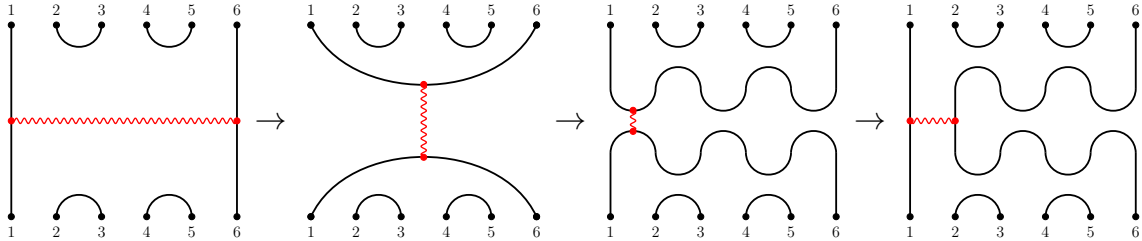

\figurefifone
\caption{The element $(I,R)$ with $I=\{\{1,12\},\{2,3\},\{4,5\},\{6,7\},\allowbreak\{8,9\},\{10,11\}\}$ and $R=\{\{1,6,7,12\},\{2,3\},\{4,5\},\{8,9\},\{10,11\}\}$ is obtained from $\tau_2\epsilon_1\cdot\tau_4\tau_3\tau_2\cdot\tau_5\tau_4$.}
\label{(I,R)NFexampPRJ1}
\end{figure}

\begin{proposition}\label{PRJgens}
The monoid $\PR(\J_n)$ is generated by $t_1,\ldots,t_n$, $f_1,\ldots,f_n$, $e_1,\ldots,e_{n-1}$. In consequence, the map $\theta:\tTL_n\to\PR(\J_n)$ is an epimorphism.
\end{proposition}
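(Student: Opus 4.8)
The plan is to assemble the three preceding lemmas into an exhaustive case analysis over the elements of $\PR(\J_n)$. First I would fix an arbitrary element $(I,R)\in\PR(\J_n)$, so that $I\in\J_n$, $R\in\PP_n$ and $I\preceq R$, and recall the dichotomy introduced above: $(I,R)$ is \emph{flat} when it admits a diagrammatic representation with no horizontal ties, and \emph{non flat} otherwise. Since this dichotomy is exhaustive, it suffices to exhibit, in each of the two cases, a word in the generators $\tau_i,\phi_i,\e_i$ of $\tTL_n$ whose image under $\theta$ equals $(I,R)$.

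For the flat case, Lemma \ref{PRJgensL2} already supplies such a word, namely $\hat H(i_1,j_1)\cdots\hat H(i_k,j_k)$ with $a(I)=\{j_1<\cdots<j_k\}$ and $b(I)=\{i_1<\cdots<i_k\}$, built from the letters $\tau_p$ and $\phi_p$. Applying $\theta$ and using $\theta(\tau_p)=t_p$, $\theta(\phi_p)=f_p$ realizes $(I,R)$ as a product of the generators $t_p$ and $f_p$. For the non flat case, Lemma \ref{PRJgensL3} first replaces $I$ by its flattening $\tilde I$, applies the flat construction to $(\tilde I,R)$, and then turns the appropriate $\phi_p$ back into $\e_p$; the resulting word lies in the letters $\tau_p,\phi_p,\e_p$, and its $\theta$-image $(I,R)$ is therefore a product of $t_p,f_p,e_p$. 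In both cases $(I,R)$ belongs to the submonoid generated by the $t_i$, $f_i$ and $e_i$, which proves the first assertion.

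The second assertion follows at once: the image $\theta(\tTL_n)$ contains $t_i=\theta(\tau_i)$, $f_i=\theta(\phi_i)$ and $e_i=\theta(\e_i)$, hence contains the whole generating set just produced, so $\theta$ is surjective. I expect the genuinely delicate point — the natural place for the argument to go wrong — to be the verification that the flat/non flat split together with Lemmas \ref{PRJgensL2} and \ref{PRJgensL3} reaches \emph{every} compatible $R$, that is, every $R\in\PP_n$ with $I\preceq R$, rather than merely some representatives. This holds because the tie choices $\hat\tau_p\in\{\tau_p,\phi_p\}$, together with the $\e_p$-replacements used in the non flat case, range precisely over all admissible mergings of the arcs of $I$ into blocks of $R$; one should check that each block of $R$ is realized by exactly one such choice, so that no compatible $R$ is omitted and no spurious crossing is introduced.
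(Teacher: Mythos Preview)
Your proposal is correct and follows the same approach as the paper, which simply records that the result is a consequence of Lemma~\ref{PRJgensL1}, Lemma~\ref{PRJgensL2} and Lemma~\ref{PRJgensL3}. Your elaboration of the flat/non-flat case split is exactly how those lemmas are intended to combine, and the concern you flag about reaching every compatible $R$ is precisely what Lemmas~\ref{PRJgensL2} and~\ref{PRJgensL3} are stated to guarantee.
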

\begin{proof}
It is a consequence of Lemma \ref{PRJgensL1}, Lemma \ref{PRJgensL2} and Lemma \ref{PRJgensL3}.
\end{proof}

Note that Lemma \ref{PRJgensL1}, Lemma \ref{PRJgensL2} and Lemma \ref{PRJgensL3} together with Proposition \ref{PRJgens} give a normal form for elements of $\PR(\J_n)$. See Figure \ref{(I,R)NFexampPRJ}.
\begin{figure}[H]
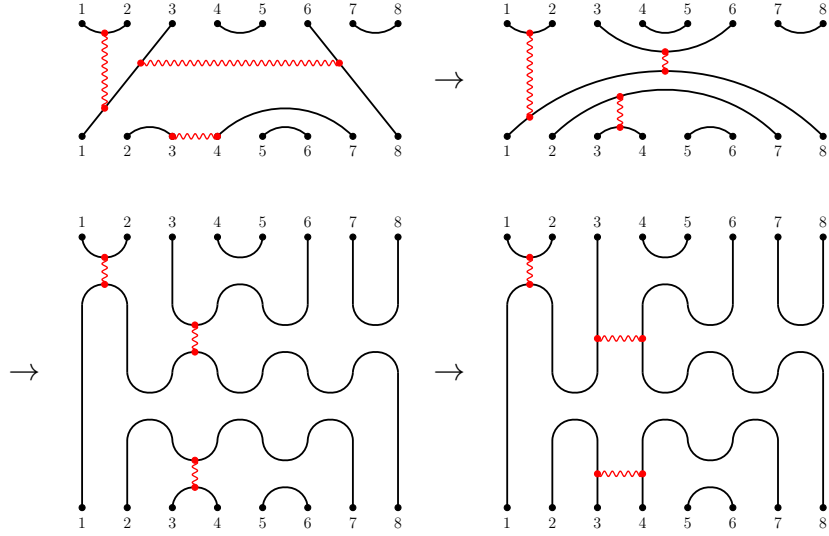

\figurefifty
\caption{The element $(I,R)$ with $I=\{\{1,2\},\{3,16\},\{4,5\},\{6,9\},\allowbreak\{7,8\},\{10,13\},\{11,12\},\{14,15\}\}$ and $R=\{\{1,2,3,6,9,16\},\{4,5\},\{7,8\},\allowbreak\{10,13,14,15\},\{11,12\}\}$ can be written as $f_1\cdot t_4e_3t_2\cdot t_5t_4e_3\cdot t_7t_6t_5$.}
\label{(I,R)NFexampPRJ}
\end{figure}

\begin{proof}[Proof of Theorem \ref{PRJnisomTTL}]
For every $u\in\tTL_n$, denote by $\bar{u}$ the element obtained from $u$ by removing the $\epsilon_i$'s and replacing each $\phi_i$ by $\tau_i$. Let $u,v\in\tTL_n$ such that $\theta(u)=\theta(v)=:(I,R)$. Thus $\theta(\bar{u})=\theta(\bar{v})=(I,I)$. As $(I,I)\in\J_n$, then $\bar{u}=\bar{v}$ in $\tTL_n$. By proceeding as \cite[Lemma 41]{AiArJu2023} we get that $u=v$ in $\tTL_n$. Therefore $\theta$ is injective, so Proposition \ref{PRJgens} implies that $\theta$ is an isomorphism.
\end{proof}

\section*{Appendix}

In Table \ref{tableCard} we collect the cardinalities, for $n\leq6$, of the monoids studied in this paper:
\begin{table}[H]
\begin{tabular}{|c|c|c|c|c|c|c|c|c|}\hline
Monoid&Section&$n=1$&$2$&$3$&$4$&$5$&$6$&OEIS \cite{OEIS}\\\hline
$\R(\IS_n)$&\ref{sec4-1}&$3$&$39$&$971$&$38140$&$2126890$&$157874467$&\\\hline
$\R(\IS_n^\bullet)$&\ref{sec4-1}&$1$&$9$&$172$&$5545$&$264147$&$17194680$&\\\hline
$\R(\I_n)$&\ref{5.2.1}&$3$&$27$&$409$&$9089$&$272947$&$10515147$&A216078$(2n)$\\\hline
$\R(\I^{\bullet}_n)$&\ref{5.2.1}&$1$&$7$&$87$&$1657$&$43833$&$1515903$&A216078$(2n-1)$\\\hline
$\PR(\I_n)$&\ref{5.2.2}&$3$&$26$&$279$&$3302$&$41390$&$538996$&A002293$(2n)$\\\hline
$\PR(\I^\bullet_n)$&\ref{5.2.2}&$1$&$7$&$70$&$799$&$9794$&$125606$&A002293$(2n-1)$\\\hline
$\PR(\J_n)$&\ref{5.3}&$1$&$4$&$22$&$140$&$969$&$7084$&A001764\\\hline
\end{tabular}
\caption{Cardinalities}
\label{tableCard}
\end{table}

\subsubsection*{Acknowledgements}

The third author was supported partially by the grant FONDECYT Regular Nro. 1210011.


\begin{thebibliography}{25}

\bibitem{Ai20}
F. Aicardi. Catalan triangles and tied arc diagrams. Preprint, 2020. URL: {\tt https://arxiv.org/abs/2011.14628}.

\bibitem{AiArJu2023}
F. Aicardi, D. Arcis, and J. Juyumaya. Brauer and Jones tied monoids. {\it J Pure Appl Algebra}, 227(1):107161, 1 2023.

\bibitem{AiJuJKTR2016}
F. Aicardi and J. Juyumaya. Tied links. {\it J Knot Theor Ramif}, 25(9):1641001, 2016.

\bibitem{AiJuMathZ2018}
F. Aicardi and J. Juyumaya. Kauffman type invariants for tied links. {\it Math Z}, 289(1–2):567–591, 6 2018.

\bibitem{AiJu2021}
F. Aicardi and J. Juyumaya. Tied links and invariants for singular links. {\it Adv Math}, 381:107629, 4 2021.

\bibitem{AiJuPa2023}
F. Aicardi, J. Juyumaya, and P. Papi. 
In preparation, 2023.

\bibitem{ArJu2021}
D. Arcis and J. Juyumaya. Tied monoids. {\it Semigroup Forum}, 103(1–2):356–394, 10 2021.

\bibitem{Ba13}
E. Banjo. The generic representation theory of the Juyumaya algebra of braids and ties. {\it Algebr Represent Th}, 16:1385-1395, 10 2013.

\bibitem{BeBo09}
O. Bernardi and N. Bonichon. Intervals in Catalan lattices and realizers of triangulations. {\it J Comb Theory A}, 116(1):55–75, 1 2009.

\bibitem{BiStRaReJKTR2012}
S. Bigelow, E. Ramos, and R. Yi. The Alexander and Jones polynomials through
representations of Rook algebras. {\it J Knot Theor Ramif}, 21(12):1250114, 9 2012.

\bibitem{Br37}
R. Brauer. On algebras which are connected with the semisimple continuous groups. {\it Ann Math}, 38(4):857-872, 10 1937.

\bibitem{PoWaProc2018}
L. Poulain d’Andecy and E. Wagner. The HOMFLY-PT polynomials of sublinks and the Yokonuma--Hecke algebras. {\it P Roy Soc Edinb A}, 148(6):1269–1278, 12 2018.

\bibitem{EaLaAim2004}
D. Easdown and T. Lavers. The inverse braid monoid. {\it Adv Math}, 186(2):438–455, 8 2004.

\bibitem{EastAiM2007}
J. East. Braids and partial permutations. {\it Adv Math}, 213(1):440–461, 8 2007.

\bibitem{Fi03}
D. FitzGerald. A presentation for the monoid of uniform block permutations. {\it B Aust
Math Soc}, 68(2):317–324, 10 2003.

\bibitem{HaDel2014}
T. Halverson and E. delMas. Representations of the Rook–Brauer algebra. {\it Commun Algebra}, 42(1):423–443, 2014.

\bibitem{HaRa2004}
T. Halverson and A. Ram. $q$–Rook monoid algebras, Hecke algebras, and Schur-Weyl duality. {\it J Math Sci}, 121:2419–2436, 2004.

\bibitem{HaRa2005}
T. Halverson and A. Ram. Partition algebras. {\it Eur J Combin}, 26(6):869–921, 8 2005.

\bibitem{OEIS}
OEIS Foundation Inc. The On-Line Encyclopedia of Integer Sequences, 2021. Founded in 1964 by N. Sloane. URL: {\tt https://oeis.org/}.

\bibitem{Jo1983}
V. Jones. Index of subfactors. {\it Invent Math}, 72:1-25, 1983.

\bibitem{Jo1993}
V. Jones. The potts model and the symmetric group. pages 259-267. World Scientific Publishing Co. Pte. Ltd., 9 1994. Subfactors: Proceedings of the Taniguchi Symposium on Operator Algebras, Kyuzeso, 1993.

\bibitem{Kr72}
G. Kreweras. Sur les partitions non croisees d'un cycle. {\it Discrete Math}, 1(4):333–350, 2 1972.

\bibitem{KuMaCEJM2006}
G. Kudryavtseva and V. Mazorchuk. On presentations of Brauer-type monoids. {\it Cent Eur J Math}, 4(3):403–434, 9 2006

\bibitem{LaFi2006}
K. Lau and D. FitzGerald. Ideal structure of the Kauffman and related monoids. {\it Commun Algebra}, 34(7):2617-2629, 2006.

\bibitem{La1998}
T. Lavers. Presentations of general products of monoids. {\it J Algebra}, 204(2):733–741, 6 1998.

\bibitem{LiMSM-AMS1996}
S. Lipscomb. {\it Symmetric Inverse Semigroups}, volume 46 of {\it Mathematical Surveys and Monographs}. American Mathematical Society, Providence, Rhode Island, 1996.

\bibitem{MaIJAC2007}
V. Maltcev. On a new approach to the dual symmetric inverse monoid $\mathcal{I}^*_X$. {\it Int J Algebr Comput}, 17(3): 567-591, 2007. 

\bibitem{MaIMRN2018}
I. Marin. Artin groups and Yokonuma–Hecke algebras. {\it Int Math Res Notices}, 2018(13):4022–4062, 7 2018.

\bibitem{MaEl2004}
P. Martin and A. Elgamal. Ramified partition algebras. {\it Math Z}, 246:473-500, 3 2004.

\bibitem{Maz1998}
V. Mazorchuk. On the structure of Brauer semigroup and its partial analogue. {\it Probl Algebra}, 13:29–45, 1998.

\bibitem{MuPCP1957}
W.D. Munn. The characters of the symmetric inverse semigroup. {\it Math Proc Cambridge}, 53:13–18, 1 1957.

\bibitem{PoUZL1961}
L.M. Popova. Defining relations of certain semigroups of partial transformations of a finite set. {\it Uchen Zap Leningrad Gos Ped Inst}, 218:191–219, 1961. Russian.

\bibitem{PrJLMS1954}
G. B. Preston. Inverse semi-groups. {\it J London Math Soc}, 29(4):396–403, 10 1954.

\bibitem{RyHaJAC2011}
S. Ryom-Hansen. On the representation theory of an algebra of braids and ties. {\it J Algebr Comb}, 33(1):57–79, 2 2011.

\bibitem{SoGD1990}
L. Solomon. The Bruhat decomposition, Tits system and Iwahori ring for the monoid of matrices over a finite field. {\it Geom Dedicata}, 36:15–49, 10 1990.

\bibitem{Wa1952}
V. Wagner. Generalized groups. {\it Dokl Akad Nauk SSSR}, 84:1119–1122, 1952. Russian.

\bibitem{Xi1999}
C. Xi. Partition algebras are cellular. {\it Compos Math}, 1(119):99–109, 10 1999.

\end{thebibliography}
\end{document}